\setlist[enumerate]{label=\emph{(\roman*)}}
\newtheorem{theorem}{Theorem}[section]
\newtheorem{lemma}[theorem]{Lemma}
\newtheorem{proposition}[theorem]{Proposition}
\theoremstyle{definition}
\newtheorem{definition}[theorem]{Definition}
\numberwithin{equation}{section}
\newcommand{\ssubset}{\subset\joinrel\subset}
\newcommand{\N}{\mathbb{N}}
\newcommand{\R}{\mathbb{R}}
\DeclarePairedDelimiter\norm{\lVert}{\rVert}
\renewcommand{\epsilon}{\ensuremath\varepsilon}
\renewcommand{\phi}{\ensuremath{\varphi}}
\newcommand\inner[2]{\langle #1,#2\rangle}
\newcommand\norml[1]{\lVert#1\rVert_{L^2}}
\newcommand\normh[1]{\lVert#1\rVert_{H^{-1}}}
\newcommand\normss[1]{\lVert#1\rVert_{*}}
\newcommand\normds[1]{\lVert#1\rVert_{**}}
\newcommand\jp[1]{\langle#1\rangle}
\newcommand\innerhs[2]{\langle #1,#2\rangle_{\dot{H}^s}}
\newcommand\innerl[2]{\langle #1,#2\rangle_{L^2}}
\newcommand\normc[1]{\lVert#1\rVert_{L^{{2}^*}}}
\newcommand\normcd[1]{\lVert#1\rVert_{L^{(2^*)'}}}
\newcommand\normhs[1]{\lVert#1\rVert_{\dot{H}^s}}
\newcommand\normd[1]{\lVert#1\rVert_{{H}^{-s}}}
\newcommand\pa[1]{\frac{\partial #1}{\partial a}}
\newcommand\pb[1]{\frac{\partial #1}{\partial b}}
\numberwithin{equation}{section}
\begin{document}
	\parindent=0pt
	\title[Stability of HLS inequality under bubbling]{Stability of Hardy Littlewood Sobolev inequality under bubbling}
	\author{Shrey Aryan}
\begin{abstract}
In this note we will generalize the results deduced in \cite{FigGla20} and \cite{deng2021sharp} to fractional Sobolev spaces. In particular we will show that for $s\in (0,1)$, $n>2s$ and $\nu\in \mathbb{N}$ there exists constants $\delta = \delta(n,s,\nu)>0$ and $C=C(n,s,\nu)>0$ such that for any function $u\in \dot{H}^s(\R^n)$ satisfying, 
\begin{align*}
\left\| u-\sum_{i=1}^{\nu} \tilde{U}_{i}\right\|_{\dot{H}^s} \leq \delta
\end{align*}
where $\tilde{U}_{1}, \tilde{U}_{2},\cdots \tilde{U}_{\nu}$ is a $\delta-$interacting family of Talenti bubbles, there exists a family of Talenti bubbles $U_{1}, U_{2},\cdots U_{\nu}$ such that
\begin{align*}
\left\| u-\sum_{i=1}^{\nu} U_{i}\right\|_{\dot{H}^s} \leq C\left\{\begin{array}{ll}
\Gamma & \text { if } 2s < n < 6s,\\
\Gamma|\log \Gamma|^{\frac{1}{2}} & \text { if } n=6s, \\
\Gamma^{\frac{p}{2}} & \text { if } n > 6s
\end{array}\right.
\end{align*}
for $\Gamma=\left\|\Delta u+u|u|^{p-1}\right\|_{H^{-s}}$ and $p=2^*-1=\frac{n+2s}{n-2s}.$
\end{abstract}
\maketitle
\section{Introduction}
\subsection{Background} Recall that the Sobolev inequality for exponent $2$ states that there exists a positive constant $S>0$ such that 
\begin{align}\label{sobolev-ineq}
    S \normc{u} \leq \norml{\nabla u}
\end{align}
for $n\geq 3$, $u\in C^{\infty}_c(\R^n)$ and $2^{*}=\frac{2n}{n-2}.$ By density, one can extend this inequality to functions in the homogeneous Sobolev space $\dot{H}^1(\R^n)$. Talenti \cite{Talenti} and Aubin \cite{Aubin} independently computed the optimal $S>0$ and showed that the extremizers of \eqref{sobolev-ineq} are functions of the form
\begin{align}\label{talenti-bubble-eqn}
    U[z,\lambda](x) = c_{n}\left(\frac{\lambda}{1+\lambda^2|x-z|^2}\right)^{(n-2)/2},
\end{align}
where $c_n>0$ is a dimenion dependent constant. Furthermore, it is well known that the Euler-Lagrange equation associated with the inequality \eqref{sobolev-ineq} is
\begin{align}\label{euler-pde}
 \Delta u + u|u|^{2^*-2}=0.
\end{align}
In particular, Caffarelli, Gidas, and Spruck in \cite{caffarelli-gidas-spruck} showed that the Talenti bubbles are the only positive solutions of the equation
\begin{align}\label{bubble-pde}
 \Delta u + u^{p}=0,
\end{align}
where $p=2^{*}-1.$ Geometrically, equation \eqref{bubble-pde} arises in the context of the Yamabe problem which given a Riemannian manifold $(M,g_0)$, in dimension $n\geq 3$, asks for a metric $g$ conformal to $g_0$ with prescribed scalar curvature equal to some function $R.$ In the case when $(M,g_0)$ is the round sphere and $R$ is a positive constant then the result of Caffarelli, Gidas, and Spruck can be seen as a rigidity result in the sense that the only possible choice of conformal functions are the Talenti bubbles defined in \eqref{talenti-bubble-eqn}. For more details regarding the geometric perspective see the discussion in Section 1.1 in \cite{ciralo}.

Following the rigidity result discussed above, it is natural to consider the case of almost rigidity. Here one asks if being an \textit{approximate} solution of \eqref{euler-pde} implies that $u$ is \textit{close} to a Talenti bubble. We first observe that this statement is not always true. For instance, consider two bubbles $U_1 = U[Re_1,1]$ and $U_2=U[-Re_1,1]$, where $R\gg 1$ and $e_1=(1,0,\cdots,0)\in \R^n$. Then $u=U_1+U_2$ will be an \textit{approximate} solution of \eqref{euler-pde} in some well-defined sense, however, it is clear that $u$ is not close to either of the two bubbles. Thus, it is possible that if $u$ \textit{approximately} solves, \eqref{euler-pde} then it might be \textit{close} to a sum of Talenti bubbles. However, Struwe in \cite{Struwe1984} showed that the above case, also known as \textit{bubbling}, is the only possible worst case. More, precisely, he showed the following
\begin{theorem}[Struwe]\label{Struwe-theorem}
Let $n \geq 3$ and $\nu \geq 1$ be positive integers. Let $\left(u_{k}\right)_{k \in \mathbb{N}} \subseteq \dot{H}^{1}\left(\mathbb{R}^{n}\right)$ be a sequence of nonnegative functions such that 
\begin{align}\label{energy-constraint}
\left(\nu-\frac{1}{2}\right) S^{n} \leq \int_{\mathbb{R}^{n}}\left|\nabla u_{k}\right|^{2} \leq\left(\nu+\frac{1}{2}\right) S^{n}
\end{align}
with $S=S(n)$ being the sharp constant for the Sobolev inequality defined in \eqref{sobolev-ineq}. Assume that
\begin{align}\label{delta-convg}
 \left\|\Delta u_{k}+u_{k}^{2^{*}-1}\right\|_{H^{-1}} \rightarrow 0 \quad \text { as } k \rightarrow \infty \text { . }   
\end{align}
Then there exist a sequence $\left(z_{1}^{(k)}, \ldots, z_{\nu}^{(k)}\right)_{k \in \mathbb{N}}$ of $\nu$ -tuples of points in $\mathbb{R}^{n}$ and a sequence $\left(\lambda_{1}^{(k)}, \ldots, \lambda_{\nu}^{(k)}\right)_{k \in \mathbb{N}}$
of $\nu$ -tuples of positive real numbers such that
\begin{align}\label{seq-convg}
 \left\|\nabla\left(u_{k}-\sum_{i=1}^{\nu} U\left[z_{i}^{(k)}, \lambda_{i}^{(k)}\right]\right)\right\|_{L^{2}} \rightarrow 0 \quad \text { as } k \rightarrow \infty.   
\end{align}
\end{theorem}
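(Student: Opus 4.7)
The plan is to establish Struwe's theorem via a profile decomposition argument combined with an iteration on the $\dot H^1$-energy, exploiting the quantization of energy for bubbles (each Talenti bubble contributes exactly $S^n$ to $\|\nabla u\|_{L^2}^2$) and the Caffarelli--Gidas--Spruck rigidity to identify the extracted profiles. The two-sided energy bound \eqref{energy-constraint} forces the iteration to terminate after exactly $\nu$ steps.

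I would start by extracting a subsequence so that $u_k \rightharpoonup u_\infty^{(0)}$ in $\dot H^1(\R^n)$, which is possible by the uniform bound from \eqref{energy-constraint}. The approximate PDE condition \eqref{delta-convg} passes to the weak limit, so $u_\infty^{(0)} \geq 0$ solves $\Delta u_\infty^{(0)} + (u_\infty^{(0)})^{2^*-1} = 0$; by Caffarelli--Gidas--Spruck \cite{caffarelli-gidas-spruck}, $u_\infty^{(0)}$ is either zero or a single Talenti bubble $U[z_1,\lambda_1]$. Setting $v_k^{(1)} = u_k - u_\infty^{(0)}$, the Brezis--Lieb lemma gives the splitting
\begin{equation*}
\|\nabla u_k\|_{L^2}^2 = \|\nabla u_\infty^{(0)}\|_{L^2}^2 + \|\nabla v_k^{(1)}\|_{L^2}^2 + o(1), \qquad \|u_k\|_{L^{2^*}}^{2^*} = \|u_\infty^{(0)}\|_{L^{2^*}}^{2^*} + \|v_k^{(1)}\|_{L^{2^*}}^{2^*} + o(1),
\end{equation*}
and one checks that $\|\Delta v_k^{(1)} + |v_k^{(1)}|^{2^*-2}v_k^{(1)}\|_{H^{-1}} \to 0$ as well; this is the most delicate algebraic step in the first stage, since the nonlinearity is only Hölder, not Lipschitz, in the critical range.

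If $v_k^{(1)} \to 0$ strongly in $\dot H^1$ we stop. Otherwise the Lions concentration--compactness principle, applied to the concentration function $Q_k(r) = \sup_{y\in\R^n} \int_{B_r(y)} |v_k^{(1)}|^{2^*}$, produces scales $\lambda_k^{(2)} > 0$ and centers $z_k^{(2)} \in \R^n$ such that the rescaled sequence
\begin{equation*}
\tilde v_k^{(1)}(x) = (\lambda_k^{(2)})^{-(n-2)/2}\, v_k^{(1)}\!\bigl(z_k^{(2)} + x/\lambda_k^{(2)}\bigr)
\end{equation*}
has a nontrivial weak limit in $\dot H^1$. Because the PDE condition and the $\dot H^1$ norm are conformally invariant under this rescaling, the new weak limit is again a Talenti bubble, and we subtract $U[z_k^{(2)},\lambda_k^{(2)}]$ (pulled back) from $u_k$ and iterate. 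Since each extracted bubble removes at least $S^n$ from $\|\nabla u_k\|_{L^2}^2$, the upper bound $(\nu + \tfrac12)S^n$ forces termination after at most $\nu$ bubbles, and the lower bound prevents fewer than $\nu$.

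The main obstacle, and the heart of Struwe's argument, is proving \emph{asymptotic orthogonality} of the extracted bubbles: for any two indices $i \neq j$ in the decomposition, either the scale ratio $\lambda_k^{(i)}/\lambda_k^{(j)} + \lambda_k^{(j)}/\lambda_k^{(i)} \to \infty$, or the ratio remains bounded but the rescaled centers $\lambda_k^{(i)}(z_k^{(i)} - z_k^{(j)})$ diverge to infinity. Without this one cannot guarantee that Brezis--Lieb-type energy splitting holds at each iteration step, nor that \eqref{seq-convg} is the right normalization (overlapping bubbles would double-count the $S^n$ contribution). Establishing orthogonality requires a careful analysis showing that had two profiles been asymptotically comparable, the weak limit at the first of the two extraction steps would already have absorbed the second profile — contradicting the strict decrease of the residual's $\dot H^1$ norm. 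Once orthogonality is in hand, the finiteness of the iteration and \eqref{seq-convg} both follow by a straightforward induction on $\nu$.
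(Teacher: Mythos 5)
The paper does not prove Theorem~\ref{Struwe-theorem}: it is a background result quoted from Struwe's original work~\cite{Struwe1984}, so there is no proof in the source to compare against. Evaluating your proposal on its own terms, the overall architecture — weak limit, Caffarelli--Gidas--Spruck rigidity, Brezis--Lieb splitting, concentration-compactness to extract rescaled profiles, energy quantization to terminate the iteration — is the right one and matches the strategy of Struwe's proof.

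There is, however, a genuine gap at the rescaling step that the theorem's nonnegativity hypothesis exists precisely to handle. You rescale the \emph{residual} $v_k^{(1)} = u_k - u_\infty^{(0)}$, extract a nontrivial weak limit, observe it solves $\Delta w + |w|^{2^*-2}w = 0$, and conclude via Caffarelli--Gidas--Spruck that it is a Talenti bubble. But Caffarelli--Gidas--Spruck classifies only \emph{positive} solutions, and $v_k^{(1)}$ is sign-changing in general (it is the difference of two nonnegative functions); the equation $\Delta w + |w|^{2^*-2}w = 0$ admits sign-changing finite-energy solutions that are not bubbles, so nothing forces your extracted profile to be one. The correct version of this step, and the reason nonnegativity of $u_k$ is assumed, rescales $u_k$ itself — whose rescaled weak limits remain nonnegative — rather than the residual, and then separately argues that under the chosen (diverging) scales and centers the previously subtracted bubbles converge weakly to zero, so the rescaled $v_k^{(1)}$ and the rescaled $u_k$ share the same weak limit. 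This makes the logical order slightly delicate: the divergence of parameters (the asymptotic orthogonality you correctly flag as the heart of the argument) must be secured \emph{before} one can identify each new profile, not afterward. As written, your sketch assumes positivity where it has not yet been earned, and the iteration would not be justified without closing this gap.
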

Morally, the above statement says that if the energy of a function $u_k$ is less than or equal to the energy of $\nu$ bubbles as in \eqref{energy-constraint} and if $u_k$ almost solves \eqref{euler-pde} in the sense that the deficit \eqref{seq-convg} is small then $u_k$ is close to $\nu$ bubbles in $\dot{H}^1$ sense. For instance, when $\nu=1$ the energy constraint in \eqref{energy-constraint} forbids us from writing $u$ as the sum of two bubbles since that would imply that $u$ has energy equal to $2S^n$. Thus, Struwe's result gives us a qualitative answer to the almost rigidity problem posed earlier. 

Building upon this work, Ciraolo, Figalli, and Maggi in \cite{ciralo} proved the first sharp quantitative stability result around one bubble, i.e. the case when $\nu=1$. Their result implied that the distance between the bubbles \eqref{seq-convg} should be linearly controlled by the deficit as in \eqref{delta-convg}. Thus one might be inclined to conjecture that the same should hold in the case of more than one bubble or when $\nu\geq 2.$

Figalli and Glaudo in \cite{FigGla20} investigated this problem and gave a positive result for any dimension $n\in \N$ when the number of bubbles $\nu = 1$ and for dimension $3\leq n \leq 5$ when the number of bubbles $\nu\geq 2.$ More precisely they proved the following
\begin{theorem}[Figalli-Glaudo]\label{theorem-delta-multi-bubble}
Let the dimension $3\leq n\leq 5$ when $\nu \geq 2$ or $n\in \N$ when $\nu=1$. Then there exists a small constant $\delta(n,\nu) > 0$ and constant $C(n,\nu)>0$ such that the following statement holds. Let $u\in \dot{H}^1(\R^n)$ such that
\begin{align}\label{multi-bubble-delta}
    \norm{\nabla u - \sum_{i=1}^{\nu}\nabla \Tilde{U}_i}_{L^2} \leq \delta
\end{align}
where $\left(\tilde{U}_{i}\right)_{1 \leq i \leq \nu}$ is any family of $\delta-$interacting Talenti bubbles. Then there exists a family of Talenti bubbles $(U_i)_{i=1}^{\nu}$ such that
\begin{align}\label{multi-bubble-main-estimate}
        \norm{\nabla u - \sum_{i=1}^{\nu}\nabla U_i}_{L^2} \leq C \normh{\Delta u + u|u|^{p-1}},
\end{align}
where $p=2^{*}-1=\frac{n+2}{n-2}.$
\end{theorem}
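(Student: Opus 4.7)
The plan is a modulation/variational argument. First, by the smallness hypothesis \eqref{multi-bubble-delta} one can select bubbles $(U_i)_{i=1}^{\nu}$ close to the given $\tilde U_i$ so that the error $\rho := u-\sigma$, with $\sigma := \sum_i U_i$, is $\dot H^1$-orthogonal to $U_i$, $\partial_{z_i^\alpha}U_i$ and $\partial_{\lambda_i}U_i$ for each $i$ and each coordinate index $\alpha$; this is achieved by minimizing $\|u-\sum_i \beta_i U[z_i,\lambda_i]\|_{\dot H^1}$ over the extended $(n+2)\nu$-parameter family (allowing the coefficient in front of each bubble to vary slightly from $1$), or equivalently by an implicit-function-theorem adjustment. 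Set $f := \Delta u + u|u|^{p-1}$ with $\Gamma := \|f\|_{H^{-1}}$. Since each $U_i$ solves \eqref{bubble-pde}, the error satisfies
\begin{equation*}
-\Delta \rho - p\sigma^{p-1}\rho = \Bigl(\sigma^{p}-\sum_i U_i^{p}\Bigr) + \mathcal N(\rho) - f,
\end{equation*}
where $\mathcal N(\rho) := (\sigma+\rho)|\sigma+\rho|^{p-1} - \sigma^{p} - p\sigma^{p-1}\rho$ is the nonlinear remainder.

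The central analytic input is the coercivity bound $\|\nabla\rho\|_{L^2}^{2} - p\int_{\R^n}\sigma^{p-1}\rho^{2}\geq c(n,\nu)\|\rho\|_{\dot H^1}^{2}$ on the above orthogonal complement. For a single bubble this is the classical Rey/Bianchi--Egnell spectral analysis: the quadratic form has exactly one negative direction ($U$) and $n+1$ zero directions ($\partial_{z^\alpha}U$, $\partial_\lambda U$), so on their common orthogonal complement it has a positive spectral gap. For a $\delta$-interacting sum I would promote this to a uniform multi-bubble bound via compactness-contradiction: if the gap failed along a sequence, suitable rescaling around a concentrating bubble would produce a limit contradicting the single-bubble case. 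Testing the equation for $\rho$ against $\rho$ and using Sobolev duality then yields
\begin{equation*}
\|\rho\|_{\dot H^1} \;\lesssim\; \Gamma + \|\mathcal N(\rho)\|_{H^{-1}} + \Bigl\|\sigma^{p}-\sum_i U_i^{p}\Bigr\|_{H^{-1}}.
\end{equation*}

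The dimensional hypothesis enters through the two remaining terms. Since $p=(n+2)/(n-2)\geq 2$ precisely when $n\leq 6$, in the range $3\leq n\leq 5$ we have the pointwise bound $|\mathcal N(\rho)|\lesssim \sigma^{p-2}\rho^{2}+|\rho|^{p}$, so Sobolev/H\"older embeddings give $\|\mathcal N(\rho)\|_{H^{-1}}\lesssim \|\rho\|_{\dot H^1}^{\min(p,2)}$, which for $\delta$ small is absorbed into the left-hand side. The truly new quantity compared with the single-bubble case is the interaction term $\sigma^{p}-\sum_i U_i^{p}$, whose $H^{-1}$ norm is comparable to the interaction parameter $Q := \max_{i\neq j}\min\bigl(\int U_i^{p-1}U_j,\int U_j^{p-1}U_i\bigr)$; this is not $\rho$-small and must be bounded by $\Gamma$ independently. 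To do so I would test the full equation $\Delta u + u|u|^{p-1}=f$ against the zero-mode $\partial_{\lambda_j}U_j$: the orthogonality of $\rho$ kills its leading-order contribution, $\Delta U_j+U_j^{p}=0$ kills the single-bubble part, and a careful asymptotic analysis of the remaining integrals identifies the survivor with a positive multiple of $Q$, yielding $Q\lesssim \Gamma+\|\rho\|_{\dot H^1}^{2}$. Chaining the two inequalities gives \eqref{multi-bubble-main-estimate}.

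The main obstacle, and the reason for the dimensional restriction, is exactly the $p\geq 2$ threshold. When $n\geq 6$ the nonlinearity $t\mapsto t|t|^{p-1}$ is only H\"older of order $p<2$; both $\|\mathcal N(\rho)\|_{H^{-1}}$ and the $\partial_{\lambda_j}U_j$-test step produce only $\|\rho\|_{\dot H^1}^{p}$, so the closed inequality yields the weaker exponent $p/2$ on $\Gamma$ (with a logarithmic defect at the endpoint $n=6$). Overcoming this requires the weighted-norm $\|\cdot\|_{*}$, $\|\cdot\|_{**}$ machinery of Deng--Sun--Wei \cite{deng2021sharp}, which is exactly the point of departure of the present paper's fractional extension. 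Within the scope of Theorem \ref{theorem-delta-multi-bubble} itself, the genuinely new step over the single-bubble stability of \cite{ciralo} is the auxiliary test-function estimate bounding $Q$ by $\Gamma$, and this is where I expect the main work to lie.
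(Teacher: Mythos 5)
Your skeleton — modulation to an orthogonal error $\rho$, a spectral gap on the orthogonal complement, then an auxiliary estimate bounding the cross-interaction by $\Gamma$ — matches the Figalli--Glaudo proof that the paper adapts in Section~2. Two substantial pieces are glossed over, however, and they account for most of the actual work.

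First, the coefficients. You correctly note that $(n+1)\nu$ parameters $(z_i,\lambda_i)$ cannot produce all $(n+2)\nu$ orthogonality conditions, so you enlarge to the $(n+2)\nu$-parameter family with weights $\beta_i$. But then the minimizing residual is $\rho = u - \sum_i\beta_i U_i$, not $u-\sum_i U_i$; in other words you must set $\sigma=\sum_i\beta_i U_i$, not $\sum_i U_i$ as you wrote. Consequently, even after the coercivity argument closes the loop on $\|\rho\|_{\dot H^1}$, you have bounded $\|u-\sum_i\beta_i U_i\|$ but not $\|u-\sum_i U_i\|$; the missing step is the estimate $|\beta_i-1|\lesssim\epsilon\|\rho\|+\Gamma+\|\rho\|^2$. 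In FG, and in Lemma~2.6 of this paper, that bound is proved \emph{together with} the interaction estimate by an induction over bubbles ordered by $\lambda_i$, testing the error equation against \emph{both} $U_i\Phi_i$ and $(\partial_\lambda U_i)\Phi_i$ (localized by a bump function $\Phi_i$) and invoking the non-degeneracy
\begin{equation*}
\frac{\int U^{2^*}}{\int U^p}\;\neq\;\frac{\int U^p\,\partial_\lambda U}{\int U^{p-1}\,\partial_\lambda U}
\end{equation*}
to show the test system is non-singular. Testing only against $\partial_{\lambda_j}U_j$, as you propose, isolates $Q$ but leaves the coefficient defect uncontrolled, so the proof does not close.

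Second, the spectral estimate $p\int|\sigma|^{p-1}\rho^2\leq\tilde c\,\|\rho\|_{\dot H^1}^2$ with $\tilde c<1$. Your suggestion of a bare compactness-contradiction argument is delicate precisely because the counterexample sequence concentrates mass near several mutually diverging bubbles: rescaling around one bubble sees only a single $U$, and the remaining mass escapes. The paper resolves this by the explicit bump-function construction (Lemmas~2.2--2.3): $\Phi_i$ localizes $\sigma$ so that $\sigma\Phi_i\approx\alpha_iU_i$, $\rho\Phi_i$ almost satisfies the single-bubble orthogonality, and the single-bubble spectral gap is applied to each $\rho\Phi_i$ with an $o(1)$ leakage. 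A compactness proof would still need essentially this decomposition, so it is a missing ingredient rather than an alternative. Once these two items are in place, the rest of your outline (the $p\geq 2$ threshold giving the pointwise bound $|\mathcal N(\rho)|\lesssim\sigma^{p-2}\rho^2+|\rho|^p$ and the resulting absorption for $3\leq n\leq 5$) agrees with the paper.
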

Here a $\delta-$ interacting family is a family of bubbles $(U_i[z_i, \lambda_i])_{i=1}^{\nu}$ where $\nu \geq 1$ and $\delta > 0$ if
\begin{align}\label{delta-interacting-bubble}
    Q = \min_{i\neq j}\left(\frac{\lambda_i}{\lambda_j}, \frac{\lambda_j}{\lambda_i}, \frac{1}{\lambda_i\lambda_j|z_i-z_j|^2} \right) \leq \delta.
\end{align}
Note that the definition $\delta$-interaction between bubbles follows naturally by estimating the $\dot{H}^1$ interaction between any two bubbles. This is explained in Remark 3.2 in \cite{FigGla20} which we recall here for the reader's convenience. If $U_1=U\left[z_1, \lambda_1\right]$ and $U_2=U\left[z_2, \lambda_2\right]$ are two bubbles,  then from the interaction estimate in Proposition B.2 in \cite{FigGla20} and the fact that$-\Delta U_i=U_i^{2^*-1}$ for $i=1,2$ we have
$$
\int_{\mathbb{R}^n} \nabla U_1 \cdot \nabla U_2=\int_{\mathbb{R}^n} U_1^{2^*-1} U_2 \approx \min \left(\frac{\lambda_1}{\lambda_2}, \frac{\lambda_2}{\lambda_1}, \frac{1}{\lambda_1 \lambda_2\left|z_1-z_2\right|^2}\right)^{\frac{n-2}{2}} .
$$
In particular, if $U_1$ and $U_2$ belong to a $\delta$-interacting family then their $\dot{H}^1$-scalar product is bounded by $\delta^{\frac{n-2}{2}}$. 

In the proof of the above theorem, the authors first approximate the function $u$ by a linear combination $\sigma = \sum_{i=1}^\nu \alpha_i U_i[z_i,\lambda_i]$ where $\alpha_i$ are real-valued coefficients. They then show that each coefficient $\alpha_i$ is close to $1$. To quantify this notion we say that the family $(\alpha_i, U_i[z_i, \lambda_i])_{i=1}^{\nu}$ is $\delta-$interacting if \eqref{delta-interacting-bubble} holds and 
\begin{align}
    |\alpha_i-1|\leq \delta. 
\end{align}
The authors \cite{FigGla20} also constructed counter-examples for the higher dimensional case when $n\geq 6$ proving that the estimate \eqref{multi-bubble-main-estimate} does not hold. The optimal estimates in dimension $n\geq 6$ were recently established by Deng, Sun, and Wei in \cite{deng2021sharp}. They proved the following
\begin{theorem}[Deng-Sun-Wei]\label{deng-theorem-main}
Let the dimension $n \geq 6 $ and the number of bubbles $\nu\geq 2$. Then there exists $\delta=\delta(n, \nu)>0$ and a large constant $C=C(n,\nu)>0$ such that the following holds. Let $u \in \dot{H}^{1}\left(\mathbb{R}^{n}\right)$ be a function such that
\begin{align}\label{deng-delta-condition}
\norml{\nabla u-\sum_{i=1}^{\nu} \nabla \tilde{U}_{i}} \leq \delta
\end{align}
where $\left(\tilde{U}_{i}\right)_{1 \leq i \leq \nu}$ is a $\delta-$interacting family of Talenti bubbles. Then there exists a family of Talenti bubbles $(U_i)_{i=1}^{\nu}$ such that
\begin{align}\label{deng-main-estimate}
\norml{\nabla u-\sum_{i=1}^{\nu} \nabla U_{i}} \leq C\left\{\begin{array}{ll}
\Gamma|\log \Gamma|^{\frac{1}{2}} & \text { if } n=6, \\
\Gamma^{\frac{p}{2}} & \text { if } n \geq 7
\end{array}\right.
\end{align}
for $\Gamma=\left\|\Delta u+u|u|^{p-1}\right\|_{H^{-1}}$ and $p=2^{*}-1=\frac{n+2}{n-2}.$
\end{theorem}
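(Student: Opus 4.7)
The proof follows the scheme of Bianchi-Egnell extended to multi-bubble configurations in \cite{FigGla20}, with the crucial refinement introduced in \cite{deng2021sharp} for high dimensions. The first step is to replace the given $\delta$-interacting approximants $\tilde U_i$ by a \emph{best} bubble decomposition, i.e.\ to minimize
\begin{align*}
(\alpha_1,\ldots,\alpha_\nu, z_1, \lambda_1, \ldots, z_\nu, \lambda_\nu) \longmapsto \left\lVert \nabla u - \sum_{i=1}^{\nu} \alpha_i \nabla U[z_i, \lambda_i] \right\rVert_{L^2}
\end{align*}
in a small neighborhood of $(1,\ldots,1, \tilde z_i, \tilde \lambda_i)$. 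For $\delta$ small enough a minimizer exists, producing a decomposition $u = \sigma + \rho$ with $\sigma = \sum_i \alpha_i U_i$, $|\alpha_i - 1| \ll 1$, and the Euler--Lagrange conditions forcing $\rho$ to be $\dot H^1$-orthogonal to each $U_i$, $\partial_{\lambda_i} U_i$, and $\partial_{(z_i)_k} U_i$. A continuity argument transfers the $\delta$-interaction property from the $\tilde U_i$ to the $U_i$, so the interaction parameter $q := \sum_{i\neq j} Q_{ij}^{(n-2)/2}$ (with $Q_{ij}$ as in \eqref{delta-interacting-bubble}) is also small.

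Substituting into $\Delta u + |u|^{p-1}u = f$ with $\lVert f \rVert_{H^{-1}} = \Gamma$ and using $\Delta U_i + U_i^p = 0$ yields the residual equation
\begin{align*}
\Delta \rho + p\, \sigma^{p-1}\, \rho \;=\; f \;-\; \mathcal I(\sigma) \;-\; \mathcal N(\sigma, \rho),
\end{align*}
where $\mathcal I(\sigma) := \sigma^p - \sum_i \alpha_i^p U_i^p$ is the bubble-bubble interaction and $\mathcal N$ collects terms at least quadratic in $\rho$. In dimensions $n \leq 5$, $p - 1 > 1$ makes $\mathcal N$ genuinely quadratic and $\lVert \mathcal I(\sigma) \rVert_{H^{-1}} \lesssim q$; combined with the perturbative Bianchi--Egnell spectral gap for $\Delta + p\sigma^{p-1}$ on the orthogonal complement (the $\delta$-interaction decouples individual-bubble spectra up to a small error) this closes the estimate linearly as in \cite{FigGla20}. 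For $n \geq 6$ the argument breaks because the second-order piece $p U_i^{p-1} \sum_{j\neq i} U_j$ of $\mathcal I(\sigma)$ is too large in $H^{-1}$ to match the expected $\dot H^1$ scale of $\rho$. Following \cite{deng2021sharp}, I would introduce, for each $i$, a correction $\phi_i$ defined as the unique solution orthogonal to the kernel of $\Delta + p U_i^{p-1}$ of
\begin{align*}
\Delta \phi_i + p U_i^{p-1} \phi_i = h_i,
\end{align*}
where $h_i$ is essentially the projection of $-p U_i^{p-1} \sum_{j \neq i} U_j$ off that kernel; solvability relies on the nondegeneracy of Talenti bubbles, and sharp weighted $L^\infty$ and $\dot H^1$ bounds on $\phi_i$ -- which pick up a logarithm precisely at $n = 6$ -- show that the refined ansatz $\sigma_\star := \sigma + \sum_i \phi_i$ absorbs the obstruction in $\mathcal I(\sigma)$.

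Rewriting $u = \sigma_\star + \rho$ and reimposing the orthogonality conditions via the implicit function theorem, the residual equation now has an $H^{-1}$ right-hand side bounded by $\Gamma$ plus terms genuinely higher-order in $q$. Testing against $\rho$ and using coercivity yields $\lVert \rho \rVert_{\dot H^1} \lesssim \Gamma + q^{1 + \varepsilon(n)}$ with the appropriate logarithmic factor at $n = 6$. To close the loop one must bound $q$ itself, which is done by testing the equation against the tangent vectors $\partial_{\lambda_i} U_i$ and $\partial_{(z_i)_k} U_i$ and invoking the sharp interaction integrals of Proposition B.2 in \cite{FigGla20} together with Pohozaev-type identities; this produces $q \lesssim \Gamma^{p/2}$ for $n > 6$ and $q \lesssim \Gamma |\log \Gamma|^{1/2}$ for $n = 6$. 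Substituting back and recalling that $u - \sum_i U_i$ decomposes into $\rho$, $(\alpha_i - 1)U_i$ and $\sum_i \phi_i$, each controlled by the above, yields \eqref{deng-main-estimate}. The main obstacle, in my view, is the construction and sharp estimation of the $\phi_i$ in weighted norms with constants uniform in the bubble parameters: it is precisely there that the dimensional threshold $n = 6$ manifests and the logarithmic correction appears, and it requires delicate dyadic bookkeeping of which bubble dominates in each region of $\R^n$.
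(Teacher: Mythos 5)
Your overall architecture is the same as the one the paper follows (after \cite{deng2021sharp}): a bubble decomposition with orthogonality conditions, a corrector built by finite-dimensional reduction that absorbs the interaction term $\sigma^p-\sum_i U_i^p$, sharp weighted estimates for that corrector whose energy is $\sim q^{p/2}$ for $n>6$ and $\sim q\,|\log q|^{1/2}$ at $n=6$, and a final step that controls the interaction parameter by testing the equation against the kernel directions $\partial_{\lambda_i}U_i$, $\partial_{z_i}U_i$. The main variation is that you build the corrector bubble-by-bubble, solving $\Delta\phi_i+pU_i^{p-1}\phi_i=h_i$ around each single bubble, whereas the paper (Section 3, following \cite{deng2021sharp}) solves one nonlinear projected problem for $\rho_0$ with the full linearization $p\sigma^{p-1}$ and proves the pointwise bound $|\rho_0|\lesssim W$ by a blow-up/barrier argument in the core and neck regions; also, for $n\geq 6$ the paper's minimization fixes the coefficients $\alpha_i=1$ and reintroduces the $U_i$-components later through the decomposition of $\rho_1$. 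These are organizational differences, not gaps.

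The genuine gap is in your closing step. You claim that testing against the tangent vectors yields $q\lesssim \Gamma^{p/2}$ for $n>6$ and $q\lesssim \Gamma|\log\Gamma|^{1/2}$ at $n=6$, and then substitute back. But the piece of $u-\sum_i U_i$ carried by your correctors has energy of order $q^{p/2}$ (resp. $q|\log q|^{1/2}$), so with only $q\lesssim\Gamma^{p/2}$ you obtain a final bound of order $\Gamma^{p^2/4}$ (resp. $\Gamma|\log\Gamma|$ at $n=6$), which is strictly weaker than \eqref{deng-main-estimate} because $p<2$ for $n\geq 7$. What the testing argument actually gives — and what is needed — is the \emph{linear} bound $q\lesssim\Gamma$: the leading term of $\int(\sigma^p-\sum_j U_j^p)\,\lambda_k\partial_{\lambda_k}U_k$ is comparable to the interaction $q$ itself (this is the content of Lemmas 2.1 and 2.3 of \cite{deng2021sharp}, used verbatim in Step 4 of the paper's $n\geq 6s$ proof), so one gets $q\lesssim o(q)+\Gamma$, hence $q\lesssim\Gamma$. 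The exponents $p/2$ and the logarithm at $n=6$ must enter only through the energy of the corrector, not through the bound on $q$; with the linear bound in place your scheme closes and reproduces the stated estimate.
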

Given these results, it is natural to wonder if they generalize to some reasonable setting. One possible way is to observe that the Sobolev inequality \eqref{sobolev-ineq} can be generalized to hold for functions in fractional spaces. This is better known as the Hardy-Littlewood-Sobolev (HLS) inequality, which states that there exists a positive constant $S>0$ such that for all $u\in C^{\infty}_{c}(\R^n)$, we have
\begin{align}\label{eqn:hls-ineq}
    S \normc{u} \leq \norml{(-\Delta)^{s/2}u} 
\end{align}
where $s\in (0,1)$ and $n>2s$ and $2^* = \frac{2n}{n-2s}.$\footnote{Note that we abuse notation by denoting $S$, $2^*$ and $p=2^{*}-1$ to be the fractional analogs of the best constant and the critical exponents related to the Sobolev inequality. For the remainder of this paper, we will only use the fractional version of these constants.}  
Observe that by a  density argument, the HLS inequality also holds for all functions $u\in \dot{H}^s(\R^n),$ where we define the fractional homogeneous Sobolev space $\dot{H}^s(\R^n)=\dot{W}^{s,2}(\R^n)$ as the closure of the space of test functions $C_c^{\infty}(\R^n)$ with respect to the norm
\begin{align*}
    \norm{u}_{\dot{H}^s}^2 = \norml{(-\Delta)^{s/2} u}^2 = \frac{C_{n,s}}{2} \int_{\R^n} \int_{\R^n} \frac{|u(x)-u(y)|^2}{|x-y|^{n+2s}} dx dy
\end{align*}
equipped with the natural inner product for $u,v\in \dot{H}^s$
\begin{align*}
\inner{u}{v}_{\dot{H}^s} =  \frac{C_{n,s}}{2}\int_{\R^n} \int_{\R^n} \frac{(u(x)-u(y)) (v(x)-v(y)) }{|x-y|^{n+2s}} dx dy, 
\end{align*}
where $C_{n,s}>0$ is a constant depending on $n$ and $s.$ Furthermore Lieb in \cite{lieb} found the optimal constant and established that the extremizers of \eqref{eqn:hls-ineq} are functions of the form
\begin{align}\label{eqn:fractional-bubble}
    U[z,\lambda](x) = c_{n,s} \left(\frac{\lambda}{1+ \lambda^2|x-z|^2}\right)^{(n-2s)/2}
\end{align}
for some $\lambda>0$ and $z\in \R^n$, where we choose the constant $c_{n,s}$ such that the bubble $U(x)=U[z,\lambda](x)$ satisfies
\begin{align*}
    \int_{\R^n} |(-\Delta)^{s/2}U|^2 = \int_{\R^n} U^{2^*}=S^{n/s}.
\end{align*}
Similar to the Sobolev inequality, the Euler Lagrange equation associated with \eqref{eqn:hls-ineq} is as follows
\begin{align}\label{eqn:fractional-euler}
    (-\Delta)^s u = u^{p},
\end{align}
where $p=2^*-1=\frac{n+2s}{n-2s}.$ Chen, Li, and Ou in \cite{fractional-euler} showed that the only positive solutions to \ref{eqn:fractional-euler} are the bubbles described in \eqref{eqn:fractional-bubble}. Following this rigidity result Palatucci and Pisante in \cite{fractional-struwe} proved an analog of Struwe's result on any bounded subset of $\R^n$. However, as noted in \cite{denitti2022stability} the same proof works carries over to $\mathbb{R}^n$. We state the result in the multi-bubble case (i.e. when $\nu\geq 1$). See also Lemma 2.1 in \cite{denitti2022stability} for the single bubble case.
\begin{theorem}[Palatucci-Pisante, Nitti-König]\label{thm:qual-fractional-struwe}
Let $n \in \mathbb{N}, 0<s<n / 2$, $\nu \geq 1$ be positive integers, and $\left(u_{k}\right)_{k \in \mathbb{N}} \subseteq \dot{H}^{s}\left(\mathbb{R}^{n}\right)$ be a sequence of functions such that 
\begin{align}\label{fractional-energy-constraint}
\left(\nu-\frac{1}{2}\right) S^{n/s} \leq \int_{\mathbb{R}^{n}}\left|(-\Delta)^{s/2} u_{k}\right|^{2} \leq\left(\nu+\frac{1}{2}\right)  S^{n/s}
\end{align}
with $S=S(n,s)$ being the sharp constant for the HLS inequality defined in \eqref{eqn:hls-ineq}. Assume that
\begin{align}\label{fractional-delta-convg}
 \left\|\Delta u_{k}+u_{k}^{2^{*}-1}\right\|_{\dot{H}^{-s}} \rightarrow 0 \quad \text { as } k \rightarrow \infty \text { . }   
\end{align}
Then there exist a sequence $\left(z_{1}^{(k)}, \ldots, z_{\nu}^{(k)}\right)_{k \in \mathbb{N}}$ of $\nu$ -tuples of points in $\mathbb{R}^{n}$ and a sequence $\left(\lambda_{1}^{(k)}, \ldots, \lambda_{\nu}^{(k)}\right)_{k \in \mathbb{N}}$
of $\nu$ -tuples of positive real numbers such that
\begin{align}\label{fractional-seq-convg}
 \left\|u_{k}-\sum_{i=1}^{\nu} U\left[z_{i}^{(k)}, \lambda_{i}^{(k)}\right]\right)\|_{\dot{H}^{s}} \rightarrow 0 \quad \text { as } k \rightarrow \infty.   
\end{align}
\end{theorem}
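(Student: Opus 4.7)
My plan is to follow Struwe's profile-decomposition strategy, adapted to the fractional setting as carried out in \cite{fractional-struwe} and refined in \cite{denitti2022stability}. The idea is to iteratively extract ``bubbles'' from $(u_k)$ by translations and dilations, use the Chen--Li--Ou rigidity result \cite{fractional-euler} to identify each non-trivial weak profile as a Talenti bubble, and show that the energy budget in \eqref{fractional-energy-constraint} forces the procedure to terminate after exactly $\nu$ extractions.

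First, the energy bound gives boundedness of $(u_k)$ in $\dot H^s(\R^n)$, so after passing to a subsequence $u_k \rightharpoonup \bar u_0$ weakly in $\dot H^s$, a.e., and strongly in $L^q_{\mathrm{loc}}$ for every $q<2^*$. Testing \eqref{fractional-delta-convg} against arbitrary $\phi \in C_c^\infty(\R^n)$ and passing to the limit shows that $\bar u_0$ is a finite-energy weak solution of $(-\Delta)^s \bar u_0 = \bar u_0|\bar u_0|^{p-1}$; the rigidity result of Chen--Li--Ou then forces $\bar u_0$ to be either zero or a Talenti bubble. Setting $r_k^{(0)} := u_k - \bar u_0$, the weak convergence of $u_k$ together with a Brezis--Lieb splitting yields
\begin{align*}
\|u_k\|_{\dot H^s}^2 = \|\bar u_0\|_{\dot H^s}^2 + \|r_k^{(0)}\|_{\dot H^s}^2 + o(1), \qquad \|(-\Delta)^s r_k^{(0)} - r_k^{(0)}|r_k^{(0)}|^{p-1}\|_{\dot H^{-s}} \to 0.
\end{align*}
If $r_k^{(0)} \to 0$ strongly in $\dot H^s$ the proof is complete. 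Otherwise, using Lions' concentration function $Q_k(\rho) = \sup_{x\in\R^n}\int_{B_\rho(x)}|r_k^{(0)}|^{2^*}\,dx$ together with the $\dot H^s$-invariance of the rescaling $u \mapsto \lambda^{-(n-2s)/2}u(\lambda^{-1}\cdot + z)$, I would choose $\lambda_k^{(1)}>0$ and $z_k^{(1)}\in \R^n$ so that the rescaled residual carries a fixed small amount of $L^{2^*}$-mass on the unit ball. Its weak limit $\bar u_1 \not\equiv 0$ again solves the critical equation, and by rigidity is another Talenti bubble.

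Iterating, each successive profile contributes exactly $S^{n/s}$ of $\dot H^s$-energy, and the concentration-function choice of scales and centers forces successively extracted bubbles to be asymptotically orthogonal, so the energies add up modulo $o(1)$. The upper bound in \eqref{fractional-energy-constraint} then halts the procedure after some $\nu'\leq \nu$ steps, the final remainder converging strongly to $0$ in $\dot H^s$; the lower bound forces $\nu'=\nu$. Translating the rescalings back to the original coordinates of $u_k$ produces the sequences $(z_i^{(k)})$ and $(\lambda_i^{(k)})$ whose existence \eqref{fractional-seq-convg} asserts.

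The main obstacle is the non-locality of $(-\Delta)^s$: neither the Brezis--Lieb identity for the $\dot H^s$-inner product nor the asymptotic orthogonality of bubbles concentrating at diverging scales can be obtained by a naive cut-off argument on $\R^n$. I would resolve this by passing to the Caffarelli--Silvestre $s$-harmonic extension on $\R^{n+1}_+$, which turns the problem into a degenerate-elliptic one where classical Struwe-type localization and cut-off manipulations go through unchanged, and then descending the resulting estimates back to traces on $\R^n$. A purely intrinsic approach using the Gagliardo seminorm is also possible, but requires more delicate commutator-type bounds on the fractional Laplacian.
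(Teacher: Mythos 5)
The paper does not give a proof of this statement; it cites Palatucci--Pisante \cite{fractional-struwe} for the bounded-domain version and Nitti--K\"onig \cite{denitti2022stability} for the observation that the argument extends to $\mathbb{R}^n$. Your sketch is the standard Struwe/Lions profile-decomposition strategy adapted to the fractional Laplacian, which is precisely the route those references take, so in spirit your proposal is the proof the paper is relying on.

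There is one substantive point you must tighten. You invoke the Chen--Li--Ou classification to identify each nontrivial weak profile as a Talenti bubble, but that classification applies to \emph{positive} solutions of $(-\Delta)^s u = u^p$. As stated in the paper, the theorem omits the nonnegativity hypothesis on $(u_k)$ that appears in the local Struwe theorem quoted just before it; this is almost certainly an oversight, because without it the weak limits could a priori be sign-changing finite-energy solutions, which are not covered by the rigidity theorem and whose $\dot H^s$-energy need not equal $S^{n/s}$. Your energy-counting step (``each successive profile contributes exactly $S^{n/s}$'' and therefore the extraction terminates after exactly $\nu$ steps) also rests on the profiles being genuine Talenti bubbles. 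You should state explicitly that $(u_k)$ are nonnegative (or at least that $\|u_k^-\|_{\dot H^s}\to 0$), pass nonnegativity to the weak limits, and only then invoke Chen--Li--Ou. A smaller remark: the $\dot H^s$-splitting in your Brezis--Lieb step is automatic from weak convergence in the Hilbert space $\dot H^s$ (orthogonal expansion), so the Caffarelli--Silvestre extension is not needed for that identity; where the extension or a commutator argument is genuinely needed is in the localization step of the iterated extraction, e.g.\ in showing that the rescaled remainder still has vanishing deficit and that bubbles concentrating at diverging scales decouple in $\dot H^s$. With the nonnegativity made explicit, the plan is sound and matches the cited references.
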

Thus following the above qualitative result, we establish the following sharp quantitative almost rigidity for the critical points of the HLS inequality
\begin{theorem}\label{fractional-theorem-main}
Let the dimension $n >2s$ where $s\in (0,1)$ and the number of bubbles $\nu \geq 1$. Then there exists $\delta=\delta(n,\nu, s)>0$ and a large constant $C=C(n,\nu, s)>0$ such that the following holds. Let $u \in \dot{H}^{s}\left(\mathbb{R}^{n}\right)$ be a function such that
\begin{align}\label{fractional-delta-condition}
\left\| u-\sum_{i=1}^{\nu} \tilde{U}_{i}\right\|_{\dot{H}^s} \leq \delta
\end{align}
where $\left(\tilde{U}_{i}\right)_{1 \leq i \leq \nu}$ is a $\delta-$interacting family of Talenti bubbles. Then there exists a family of Talenti bubbles $\left({U}_{i}\right)_{1 \leq i \leq \nu}$ such that
\begin{align}\label{fractional-main-estimate}
\left\| u-\sum_{i=1}^{\nu} U_{i}\right\|_{\dot{H}^s} \leq C\left\{\begin{array}{ll}
\Gamma & \text { if } 2s < n < 6s,\\
\Gamma |\log \Gamma|^{\frac{1}{2}} & \text { if } n =    6s\\
\Gamma^{\frac{p}{2}} & \text { if } n >  6s
\end{array}\right.
\end{align}
for $\Gamma=\left\|\Delta u+u|u|^{p-1}\right\|_{H^{-s}}$ and $p=2^*-1=\frac{n+2s}{n-2s}.$
\end{theorem}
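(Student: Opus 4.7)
The plan is to adapt the strategy of Figalli--Glaudo and Deng--Sun--Wei to the fractional setting, replacing local manipulations (e.g., integration by parts, pointwise gradient bounds) either by their nonlocal analogues on $\dot H^s(\R^n)$ via the formula $\inner{u}{v}_{\dot H^s} = \innerl{(-\Delta)^{s/2}u}{(-\Delta)^{s/2}v}$ or, when pointwise information is needed, by lifting to the Caffarelli--Silvestre extension. The first step is to convert the smallness condition \eqref{fractional-delta-condition} into a good initial decomposition. Minimizing the functional
\[
(\alpha_1,\dots,\alpha_\nu, z_1,\dots,z_\nu,\lambda_1,\dots,\lambda_\nu)\ \mapsto\ \Bigl\|u-\sum_{i=1}^\nu \alpha_i U[z_i,\lambda_i]\Bigr\|_{\dot H^s}^2
\]
over a suitable neighborhood and invoking Theorem \ref{thm:qual-fractional-struwe} together with a compactness argument (as in Proposition 3.1 of \cite{FigGla20}), I obtain a $\delta'$-interacting family $(\alpha_i,U_i)_{i=1}^\nu$ with $\delta'\to 0$ as $\delta\to 0$ such that $u=\sigma+\rho$ with $\sigma=\sum_i\alpha_i U_i$ and $\rho$ orthogonal in $\dot H^s$ to the tangent space spanned by $\{U_i,\partial_{\lambda_i}U_i,\partial_{z_i}U_i\}$.

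Second, I would establish the spectral coercivity estimate
\[
\innerhs{\rho}{\rho}-p\int_{\R^n}\sigma^{p-1}\rho^{2}\ \geq\ c\,\normhs{\rho}^{2}
\]
on the orthogonal complement described above. For a single bubble this follows from the non-degeneracy of fractional Talenti bubbles proved by D\'avila--del Pino--Sire and the classification of the kernel of the linearized operator $(-\Delta)^s-pU^{p-1}$. For a $\delta'$-interacting family one perturbs the single-bubble estimate, using that bubbles decouple quantitatively in the limit $\delta'\to 0$, just as in Proposition 3.7 of \cite{FigGla20}.

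Third, I test the equation $(-\Delta)^s u = |u|^{p-1}u + f$, where $\|f\|_{\dot H^{-s}}=\Gamma$, against $\rho$ itself. Writing $|u|^{p-1}u=|\sigma+\rho|^{p-1}(\sigma+\rho)$ and Taylor expanding gives
\[
\normhs{\rho}^{2}\ \lesssim\ \Gamma\,\normhs{\rho}+\Bigl|\int_{\R^n}\!\!\Bigl(|\sigma|^{p-1}\sigma-\sum_i U_i^{p}\Bigr)\rho\Bigr|+\int_{\R^n}\!\!|\sigma|^{p-2}\rho^{2}\cdot|\rho|+\text{(cubic and higher in }\rho).
\]
The bracketed interaction term is the crucial one: it is controlled by the $\dot H^{-s}$-norm of $(-\Delta)^s\sigma-\sum_i U_i^{p}$, which by the fractional analogue of the interaction estimate (Proposition B.2 of \cite{FigGla20} applied with exponent $s$, yielding $\sim Q^{(n-2s)/2}$ between any two bubbles) scales like $Q^{\min(1,(p-1))\frac{n+2s}{2n}}$ up to logarithmic losses, producing the threshold behaviour $n\lessgtr 6s$.

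The main obstacle is the case $n\ge 6s$, where the naive interaction estimate is too weak to close the bootstrap in $\Gamma$; this is the fractional counterpart of the Deng--Sun--Wei phenomenon. To handle it, I introduce a second-order correction $\rho_0$ solving the linearized equation
\[
(-\Delta)^s\rho_{0}-p\sigma^{p-1}\rho_{0}=\sum_i U_i^{p}-\Bigl(\sum_i U_i\Bigr)^{p}\quad\text{in }\dot H^s,
\]
with $\rho_0$ orthogonal to the tangent space. The construction requires sharp $L^q$-estimates for the fractional Green's function of $(-\Delta)^s-pU^{p-1}$ and careful matching of the decay of $\rho_0$ with that of $U_i^{p}$. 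Setting $\tilde\rho=\rho-\rho_0$ and repeating the testing argument gives $\|\tilde\rho\|_{\dot H^s}\lesssim \Gamma$, after which one estimates $\|\rho_0\|_{\dot H^s}$ directly. The dichotomy $\Gamma|\log\Gamma|^{1/2}$ versus $\Gamma^{p/2}$ arises from whether the integral $\int\sigma^{p-2}\rho_{0}^{2}$ is logarithmically divergent (exactly at $n=6s$) or of power type. Finally, a standard modulation argument allows one to replace $\sigma$ by a nearby exact sum of bubbles $\sum_i U_i'$ at the cost of a term comparable to $\|\rho\|_{\dot H^s}$, yielding \eqref{fractional-main-estimate}.
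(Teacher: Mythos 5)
Your high-level architecture matches the paper's: modulate to a closest sum of bubbles, prove a spectral coercivity estimate, test the equation against $\rho$, and for $n\ge 6s$ split off a first approximation $\rho_0$ and estimate the remainder separately. However, there are several genuine gaps where the nonlocality of $(-\Delta)^s$ makes the naive adaptation fail, and these are exactly the places where the paper does nontrivial new work.

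First, your second step (the spectral coercivity with $\sigma=\sum\alpha_i U_i$) is not a routine perturbation of Proposition 3.7 in \cite{FigGla20}. The local proof localizes $\rho$ to each bubble by bump functions $\Phi_i$ and uses the pointwise Leibniz identity
$\int|\nabla(\rho\Phi_i)|^2 = \int|\nabla\rho|^2\Phi_i^2+\int\rho^2|\nabla\Phi_i|^2+2\int\rho\Phi_i\nabla\rho\cdot\nabla\Phi_i$,
which has no analogue for $(-\Delta)^{s/2}$. One must instead control the commutator $\mathcal{C}(\rho,\Phi_i)=(-\Delta)^{s/2}(\rho\Phi_i)-\rho(-\Delta)^{s/2}\Phi_i-\Phi_i(-\Delta)^{s/2}\rho$ via a Kato--Ponce type estimate, and for this one needs a cut-off function whose fractional gradient $(-\Delta)^{s/2}\Phi_i$ has small $L^{n/s}$ norm. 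Constructing such a cut-off (the paper uses a log-profile and proves $\int|(-\Delta)^{s/2}\varphi|^{n/s}\lesssim\log(R/r)^{1-n/s}$) is a prerequisite your proposal omits entirely; without it the coercivity argument does not close.

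Second, for $n\ge 6s$ the linearized equation you write for $\rho_0$ is generically \emph{not} solvable as stated. The operator $(-\Delta)^s-p\sigma^{p-1}$ has an approximate cokernel spanned by the $U_i^{p-1}Z_i^a$, so the right-hand side must be projected off it: one needs Lagrange multiplier terms $-\sum_{i,a}c_a^i U_i^{p-1}Z_i^a$ on the right together with orthogonality conditions (this is the finite-dimensional reduction). Also note the paper's $\rho_0$ actually solves a genuinely nonlinear system in $\rho_0$, constructed by a fixed point argument after the a priori estimate; your linear version can in principle be made to work, but you must explicitly carry the projection. Relatedly, the a priori (barrier) estimate behind the pointwise bound $|\rho_0|\lesssim W$ requires a new pointwise differential inequality of the form $(-\Delta)^s[(1+|x|^2)^{-s}]\geq\alpha_{n,s}(1+|x|^2)^{-2s}$, whose fractional analogue is one of the paper's contributions and does not follow from standard chain-rule inequalities. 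Finally, your conclusion jumps from $\|\tilde\rho\|_{\dot H^s}\lesssim\Gamma$ to \eqref{fractional-main-estimate}, but the deficit naturally controls only $Q^2+\Gamma$; you still need to establish $Q\lesssim\Gamma$ by testing against the modulations $Z_k^{n+1}$ and computing the leading-order asymptotics of the interaction integral, which is a separate argument. The explanation you give for the dichotomy at $n=6s$ (logarithmic divergence of $\int\sigma^{p-2}\rho_0^2$) is also off-target: since $p-2\leq 0$ for $n\geq 6s$, that integral is not the right object; the logarithm arises from $\int v_{1,1}w_{1,1}\approx R^{-8}\log R$ when $n=6s$ in the energy estimate of $\rho_0$.
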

Note that analogous to the local case, i.e. when $s=1$, the definition of $\delta$-interaction between bubbles carries over naturally since given any two bubbles $U_1=U\left[z_1, \lambda_1\right], U_2=U\left[z_2, \lambda_2\right]$ we can estimate their interaction using Proposition B.2 in \cite{FigGla20} and the fact that$(-\Delta)^s U_i=U_i^p$ for $i=1,2$ we have
$$
\int_{\mathbb{R}^n} (-\Delta)^{s/2} U_1 \cdot (-\Delta)^{s/2}  U_2=\int_{\mathbb{R}^n} U_1^p U_2 \approx \min \left(\frac{\lambda_1}{\lambda_2}, \frac{\lambda_2}{\lambda_1}, \frac{1}{\lambda_1 \lambda_2\left|z_1-z_2\right|^2}\right)^{\frac{n-2s}{2}} .
$$
In particular, if $U_1$ and $U_2$ belong to a $\delta$-interacting family then their $\dot{H}^s$-scalar product is bounded by $\delta^{\frac{n-2s}{2}}$. Furthermore, the conditions on the scaling and the translation parameters in Theorem 1.1 of Palatucci and Pisante in \cite{fractional-struwe} imply that bubbling observed in the non-local setting is caused by the same mechanism as in the local case, i.e. either when the distance between the centers of the bubbles or their relative concentration scales (such as $\lambda_i/\lambda_j$ in \ref{defn: Q interaction term}) tend to infinity.

The above theorem, besides being a natural generalization, also provides a quantitative rate of convergence to equilibrium for the non-local fast diffusion equation
\begin{align}\label{eqn:frac-fast-diff}
\begin{cases}\partial_t u+(-\Delta)^s\left(|u|^{m-1} u\right)=0, & t>0, x \in \mathbb{R}^n \\ u(0, x)=u_0(x), & x \in \mathbb{R}^n\end{cases} 
\end{align}
with
$$
m:=\frac{1}{p}=\frac{1}{2^*-1}=\frac{n-2 s}{n+2 s}.
$$
In particular, Nitti and König in \cite{denitti2022stability} established the following result
\begin{theorem}[Nitti-König]\label{diff-thm}
Let $n \in \mathbb{N}$ and $s \in(0, \min \{1, n / 2\})$. Let $u_0 \in C^2\left(\mathbb{R}^n\right)$ such that $u_0 \geq 0$ and $|x|^{2 s-n} u_0^m\left(x /|x|^2\right)$ can be extended to a positive $C^2$ function in the origin. Then, there exist an extinction time $\bar{T}=T\left(u_0\right) \in(0, \infty)$ such that the solution $u$ of \eqref{eqn:frac-fast-diff} satisfies $u(t, x)>0$ for $t \in(0, \bar{T})$ and $u(\bar{T}, \cdot) \equiv 0$. Moreover, there exist $z \in \mathbb{R}^n$ and $\lambda>0$, such that
$$
\left\|\frac{u(t, \cdot)}{U_{\bar{T}, z, \lambda}(t, \cdot)}-1\right\|_{L^{\infty}\left(\mathbb{R}^n\right)} \leq C_*(\bar{T}-t)^\kappa \quad \forall t \in(0, \bar{T}),
$$
for some $C_*>0$ depending on the initial datum $u_0$ and all $\kappa<\kappa_{n, s}$, where
$$
\kappa_{n, s}:=\frac{1}{(n+2-2 s)(p-1)} \gamma_{n, s}^2
$$
with $p=\frac{n+2 s}{n-2 s}$, $\gamma_{n, s}=\frac{4 s}{n+2 s+2}$ and 
$$
U_{\bar{T}, z, \lambda}(t, x):=\left(\frac{p-1}{p}\right)^{\frac{p}{p-1}}(\bar{T}-t)^{\frac{p}{p-1}} U[z, \lambda]^p(x), \quad t \in(0, \bar{T}), x \in \mathbb{R}^n
$$
with $\frac{p}{p-1}=\frac{1}{1-m}=\frac{n+2 s}{4 s}$.
\end{theorem}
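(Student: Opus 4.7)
The plan is to mimic the now-classical strategy for rate-of-convergence-to-equilibrium in fast diffusion equations, passing to self-similar variables in which the Talenti bubbles become stationary solutions, and then exploiting the single-bubble case of Theorem \ref{fractional-theorem-main} to quantify the convergence.

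First, I would introduce the rescaling
\begin{align*}
u(t,x) = \left(\tfrac{p-1}{p}\right)^{\frac{p}{p-1}} (\bar{T}-t)^{p/(p-1)} w(\tau, x), \qquad \tau = -\log(\bar{T}-t),
\end{align*}
which transforms \eqref{eqn:frac-fast-diff} into an evolution whose stationary solutions are exactly the positive solutions of $(-\Delta)^s W = W^p$, i.e.\ the bubbles \eqref{eqn:fractional-bubble}. The Kelvin-transform regularity hypothesis on $u_0$, namely that $|x|^{2s-n} u_0^m(x/|x|^2)$ extends to a positive $C^2$ function at the origin, encodes decay like a bubble at infinity and, combined with standard barriers/comparison for the fractional fast diffusion flow, gives both the finite extinction time $\bar{T}$ and uniform two-sided bounds on $w(\tau,\cdot)$ for all large $\tau$. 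Parabolic Schauder/De Giorgi-type regularity for the fractional Laplacian would then upgrade this to uniform $C^{2}$-type control with a positive lower bound uniformly in $\tau$.

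Next, I would extract qualitative convergence. In the self-similar variables there is a natural Lyapunov functional $\mathcal{F}(w)$ built from the HLS ratio (the deviation of the Sobolev quotient of $w$ from its extremal value), and its dissipation rate is bounded below by a multiple of $\lVert (-\Delta)^s w - w^p \rVert_{H^{-s}}^{2}$. Together with the energy constraint placing $w(\tau,\cdot)$ in the $\nu=1$ regime of Theorem \ref{thm:qual-fractional-struwe}, this produces a subsequential limit in $\dot{H}^s$ equal to some bubble $U[z_{\infty},\lambda_{\infty}]$. Applying now Theorem \ref{fractional-theorem-main} with $\nu=1$, for every large $\tau$ there exist modulation parameters $(z(\tau),\lambda(\tau))$ with
\begin{align*}
\lVert w(\tau,\cdot) - U[z(\tau),\lambda(\tau)]\rVert_{\dot{H}^s} \leq C\, \lVert (-\Delta)^s w(\tau,\cdot) - w(\tau,\cdot)^p\rVert_{H^{-s}}.
\end{align*}
The right-hand side is, up to a constant, $\sqrt{-\partial_{\tau}\mathcal{F}(w(\tau))}$, so a Grönwall-type differential inequality yields exponential decay in $\tau$ of $\mathcal{F}(w(\tau))$ at a rate governed by the spectral gap of the linearized operator $\mathcal{L} := (-\Delta)^s - p\,U^{p-1}$ on the orthogonal complement of its kernel (the $(n+1)$-parameter family of scaling and translation directions). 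Undoing the change of variables, exponential decay in $\tau$ becomes the polynomial rate $(\bar{T}-t)^{\kappa}$; the factor $\gamma_{n,s}=4s/(n+2s+2)$ records the first positive eigenvalue of $\mathcal{L}$ above its kernel in the fractional setting, while the prefactor $(n+2-2s)(p-1)$ in $\kappa_{n,s}$ comes from the parabolic rescaling.

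The hard part will be twofold. First, Theorem \ref{fractional-theorem-main} lives in $\dot{H}^s$, whereas the conclusion is a pointwise statement on the relative error $w/U -1$; bridging this gap requires parabolic regularity for the fractional fast diffusion equation near extinction (fractional De Giorgi--Nash--Moser together with Schauder estimates), likely in weighted norms adapted to the bubble tail, and a bootstrap that converts $\dot{H}^s$ smallness into uniform relative smallness. Second, to genuinely harvest the spectral gap of $\mathcal{L}$ one must choose $(z(\tau),\lambda(\tau))$ so as to project out the kernel of $\mathcal{L}$, which amounts to an implicit-function-theorem construction of a smooth modulation along the manifold of bubbles, and then identify sharply the first eigenvalue above this kernel in the fractional setting in order to track the sharp constants $\gamma_{n,s}$ and $\kappa_{n,s}$; this explicit spectral computation in the nonlocal regime is precisely the technical core that Nitti and König carry out in \cite{denitti2022stability}.
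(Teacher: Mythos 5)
This statement is not proved in the paper at all: it is quoted from Nitti--K\"onig \cite{denitti2022stability}, and the paper only remarks that Theorem \ref{fractional-theorem-main} combined with the argument of Theorem 5.1 in \cite{FigGla20} would give such a convergence result \emph{without} an explicit bound on the rate --- the paper explicitly states that the explicit $\kappa_{n,s}$ is the novel point of \cite{denitti2022stability} and is not obtainable along that route. Your proposal is essentially that qualitative route (self-similar rescaling, Lyapunov functional built from the HLS quotient, the $\nu=1$ case of Theorem \ref{fractional-theorem-main}, Gr\"onwall), so at best it reproduces convergence at some unspecified polynomial rate; it cannot deliver the stated exponent $\kappa_{n,s}=\gamma_{n,s}^2/((n+2-2s)(p-1))$.

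Two concrete gaps. First, you assert that the Gr\"onwall step yields exponential decay in $\tau$ ``at a rate governed by the spectral gap'' and that $\gamma_{n,s}$ ``records the first positive eigenvalue,'' but the stability estimate of Theorem \ref{fractional-theorem-main} only gives $\lVert w-U\rVert_{\dot H^s}\leq C\,\lVert (-\Delta)^s w-w^p\rVert_{H^{-s}}$ with an inexplicit constant $C(n,\nu,s)$, and the entropy--dissipation loop closes only up to that constant; no identification of the sharp value $\gamma_{n,s}=4s/(n+2s+2)$ or of the prefactor $(n+2-2s)(p-1)$ follows from this scheme. Nitti and K\"onig obtain the explicit rate by a genuinely different mechanism (a relative-error/entropy analysis in weighted spaces in the spirit of Bonforte--Figalli, with sharp spectral information for the linearization in the relative-error geometry), and your sketch defers exactly this step to their paper, which makes the argument circular as a proof of the stated theorem. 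Second, the conclusion is an $L^\infty$ bound on $u/U_{\bar T,z,\lambda}-1$: upgrading $\dot H^s$ smallness to uniform relative-error smallness requires the global Harnack principle and relative-error regularity theory for the fractional fast diffusion flow (results of Jin--Xiong and Bonforte--Sire--V\'azquez type), not merely fractional De Giorgi--Nash--Moser and Schauder estimates as you suggest; without that ingredient, which is also supplied in \cite{denitti2022stability}, the $L^\infty$ conclusion does not follow from your scheme.
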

Using Theorem \ref{fractional-theorem-main} one can also establish Theorem \ref{diff-thm} by following the arguments of the proof of Theorem 5.1 in \cite{FigGla20}. However, such a result would not provide explicit bounds on the rate parameter $\kappa$. The remarkable point of the Theorem \ref{diff-thm} is that the authors provide an explicit bound on the parameter $\kappa.$

\subsection{Proof Sketch and Challenges}\label{section:proof-sketch}
The proof of the Theorem \ref{fractional-theorem-main} follows by adapting the arguments in \cite{FigGla20} and \cite{deng2021sharp} however we need to overcome several difficulties introduced by the non-locality of the fractional laplacian.

To understand the technical challenges involved, we sketch the proof of Theorem \ref{fractional-theorem-main}. First, consider the case when $n\in (2s,6s)$. Then the linear estimate in \eqref{fractional-main-estimate} follows by adapting the arguments in \cite{FigGla20}. The starting point in their argument is to approximate the function $u$ by a linear combination of Talenti Bubbles. This can be achieved for instance by solving the following minimization problem
\begin{align*}
    \normhs{u -\sum_{i=1}^{\nu}\alpha_i  U[z_i, \lambda_i] }= \min_{\bar{z}_{1}, \ldots, \bar{z}_{\nu} \in \mathbb{R}^n \atop \bar{\lambda}_{1}, \ldots, \bar{\lambda}_{\nu}\in \mathbb{R}_{+}, \tilde{\alpha}_1,\ldots,\tilde{\alpha}_{\nu} \in \mathbb{R}}\normhs{ u-\sum_{i=1}^{\nu}\tilde{\alpha_i}  U[\tilde{z}_i, \tilde{\lambda}_i]}
\end{align*}
where $U_i = U[z_i, \lambda_i]$ and $\sigma = \sum_{i=1}^{\nu} \alpha_i U_i$ is the sum of Talenti Bubbles closest to the function $u$ in the $\dot{H}^{s}(\R^n)$ norm. Denote the error between $u$ and this approximation by $\rho$, i.e.
\begin{align*}
    u = \sigma + \rho.
\end{align*}
Now, the idea is to estimate the $H^s$ norm of $\rho$. Thus
\begin{align*}
    \normhs{\rho}^2=\innerhs{\rho}{\rho} &= \innerhs{\rho}{u-\sigma} =\innerhs{\rho}{u} = \innerl{\rho}{(-\Delta)^s u} \\
    &= \innerl{\rho}{(-\Delta)^s u-u|u|^{p-1}} + \innerl{\rho}{u|u|^{p-1}}\\
    &\leq \normhs{\rho} \normd{(-\Delta)^s u-u|u|^{p-1}} + \int_{\R^n} u|u|^{p-1}\rho.
\end{align*}
where in the third equality we made use of the orthogonality between $\rho$ and $\sigma$. This follows from differentiating in the coefficients $\alpha_i$. On expanding $u=\sigma+\rho$ we can further estimate the second term in the final inequality as follows
\begin{align*}
\int_{\R^n} \rho u|u|^{p-1} &\leq  p\int_{\R^n} |\sigma|^{p-1} \rho^2 \nonumber \\
&\quad + C_{n,\nu}\left(\int_{\R^n}|\sigma|^{p-2} |\rho|^3 + \int_{\R^n}|\rho|^{p+1}+\sum_{1\leq i\neq j\leq \nu} \int_{\R^n}\rho U_i^{p-1} U_j\right).
\end{align*}
The argument for controlling the second and third terms follows in the same way as in \cite{FigGla20} and this is where one uses the fact that $n<6s.$ However, it is not so clear how to estimate the first and the last term. For the first term, ideally, we would like to show that
\begin{align*}
    p \int_{\R^n} |\sigma|^{p-1} \rho^2 \leq \tilde{c} \normhs{\rho}^2
\end{align*}
where $\Tilde{c}<1$ is a positive constant. In \cite{FigGla20}, the authors make use of a spectral argument that essentially says that the third eigenvalue of a linearized operator associated with \eqref{eqn:fractional-euler} is strictly larger than $p.$ Such a result was not proved in the non-local setting and thus, our first contribution is to prove this fact rigorously in Section \ref{section:spectral}.

By linearizing around a single bubble we deduce that 
\begin{align*}
    p \int_{\R^n} |U|^{p-1} \rho^2 \leq \tilde{c} \normhs{\rho}^2
\end{align*}
however, recall that our goal is to estimate $\int_{\R^n} |\sigma|^{p-1} \rho^2$ instead of $\int_{\R^n} |U|^{p-1} \rho^2.$ Thus one would like to localize $\sigma$ by a bump function $\Phi_i$ such that $\sigma \Phi_i \approx \alpha_i  U_i.$ This allows us to show that
\begin{align*}
    \int_{\mathbb{R}^{n}}\left(\rho \Phi_{i}\right)^{2} U_{i}^{p-1} \leq \frac{1}{\Lambda} \int_{\mathbb{R}^{n}}\left|(-\Delta)^{s/2}\left(\rho \Phi_{i}\right)\right|^{2}+o(1)\normhs{\rho}^{2}
\end{align*}
where $\Lambda>p.$ Now observe that in the local setting, estimating the first term on the right-hand side of the above inequality would be elementary since we could simply write the following identity
\begin{align*}
\int_{\mathbb{R}^n}\left|\nabla\left(\rho \Phi_i\right)\right|^2=\int_{\mathbb{R}^n}|\nabla \rho|^2 \Phi_i^2+\int_{\mathbb{R}^n} \rho^2\left|\nabla \Phi_i\right|^2+2 \int_{\mathbb{R}^n} \rho \Phi_i \nabla \rho \cdot \nabla \Phi_i.
\end{align*}
However, this clearly fails in the non-local setting. To get around this issue we observe that
\begin{align*}
   (-\Delta)^{s/2}(\rho \Phi_i) = \rho(-\Delta)^{s/2}\Phi_i +\Phi_i(-\Delta)^{s/2}\rho+\mathcal{C}(\rho, \Phi_i)
\end{align*}
where $\mathcal{C}(\rho,\Phi_i)$ is error term introduced by the fractional laplacian. Thus to estimate the product of the fractional gradient we must control this error term, which is fortunately controlled by the commutator estimates from Theorem A.8 in \cite{KeningProductRule}. These estimates state that the remainder term $\mathcal{C}(\rho, \Phi_i)$ satisfies
\begin{align}\label{kato}
\norml{\mathcal{C}(\rho, \Phi_i}\leq C\norm{(-\Delta)^{s_{1} / 2} \Phi_i}_{L^{p_{1}}}
\norm{(-\Delta)^{s_{2} / 2} \rho}_{L^{p_{2}}}
\end{align}
provided that $s_{1}, s_{2} \in [0, s], s=s_{1}+s_{2}$ and $p_{1}, p_{2} \in(1,+\infty)$ satisfy
$$
\frac{1}{2}=\frac{1}{p_{1}}+\frac{1}{p_{2}}.
$$
Setting $s_1=s, s_2=0, p_1=n/s$ and $p_2=2^* = \frac{2n}{n-2s}$ we get
\begin{align}\label{fractional-final-error-estimate}
\norml{\mathcal{C}(\rho, \Phi_i}\leq C\norm{(-\Delta)^{s / 2} \Phi_i}_{L^{n/s}}\norm{\rho}_{L^{2^*}}\lesssim \norm{(-\Delta)^{s / 2} \Phi_i}_{L^{n/s}}\normhs{\rho}.
\end{align}
Thus it remains to control the fractional gradient of the bump function which we establish in Lemma \ref{fractional-cutoff-lemma}.

Next, we consider the case when $n\geq 6s$. We follow the proof strategy of \cite{deng2021sharp}. The argument proceeds in the following steps. 

Step (\rm{i}). First we obtain a family of bubbles $(U_i)_{i=1}^{\nu}$ as a result of solving the following minimization problem
\begin{align}\label{minimization process}
    \normhs{ u -  \sum_{i=1}^{\nu}U[z_i, \lambda_i]} =\inf _{\tilde{z}_{1},\cdots,\tilde{z}_{\nu} \in \mathbb{R}^{n} \atop \tilde{\lambda}_{1},\cdots, \tilde{\lambda}_{\nu}>0}\normhs{ u -  \sum_{i=1}^{\nu}U[\tilde{z}_i, \tilde{\lambda}_i]}
\end{align}
and denote the error $\rho = u - \sum_{i=1}^{\nu}U_i= u - \sigma.$ Since $(-\Delta)^s U_i = U_i^p$ for $i=1,\cdots,\nu$ we have
\begin{align}\label{u-split}
(-\Delta)^s u-u|u|^{p-1}
&=(-\Delta)^s(\sigma+\rho)-(\sigma+\rho)|\sigma+\rho|^{p-1} =(-\Delta)^s \rho-p\sigma^{p-1}\rho \nonumber\\
&\quad -(\sigma^p - \sum_{i=1}^{\nu}U_i^p) - ((\sigma+\rho)|\sigma+\rho|^{p-1}-\sigma^p-p\sigma^{p-1}\rho).
\end{align}
Thus the error $\rho$ satisfies
\begin{align}\label{rho PDE}
(-\Delta)^s\rho-p\sigma^{p-1}\rho-I_1-I_2-f=0,
\end{align}
where 
\begin{align}\label{f I1 I2 defn}
    f &= (-\Delta)^s u -u|u|^{p-1},\quad  I_1=\sigma^p- \sum_{i=1}^{\nu}U_i^p, \nonumber \\ I_2&=(\sigma+\rho)|\sigma+\rho|^{p-1}-\sigma^p-p\sigma^{p-1}\rho.
\end{align}
Clearly \eqref{fractional-delta-condition} implies that $\normhs{ \rho}\leq \delta$ and furthermore the family of bubbles $(U_i)_{i=1}^{\nu}$ are also $\delta'-$interacting where $\delta'$ tends to $0$ as $\delta$ tends to $0.$ In the first step we decompose $\rho = \rho_0 + \rho_1$ where we will prove the existence of the first approximation $\rho_0$, which solves the following system
\begin{align}
\left\{\begin{array}{l}(-\Delta)^s \rho_{0}-\left[\left(\sigma+\rho_{0}\right)^{p}-\sigma^{p}\right]=\left(\sigma^{p}-\sum_{j=1}^{\nu} U_{j}^{p}\right)-\sum_{i=1}^{\nu} \sum_{a=1}^{n+1} c_{a}^{i} U_{i}^{p-1} Z_{i}^{a} \\ \innerhs{\rho}{Z_i^a}=0, \quad i=1,\cdots,\nu;\quad a=1, \cdots, n+1\end{array}\right.
\end{align}
where $(c_a^i)$ is a family of scalars and $Z^a_i$ are the rescaled derivative of $U[z_i,\lambda_i]$ defined as follows
\begin{align}\label{deng-z-derivative-notation}
Z_{i}^{a}=\frac{1}{\lambda_{i}} \frac{\partial U\left[z, \lambda_{i}\right]}{\partial z^{a}}\bigg|_{z=z_{i}}, \quad Z_{i}^{n+1}=\lambda_{i} \frac{\partial U\left[z_{i}, \lambda\right]}{\partial \lambda}\bigg|_{\lambda=\lambda_{i}}    
\end{align}
for $1\leq i\leq \nu$ and $1\leq a\leq n.$
\newline
Step (\rm{ii}). The next step is to establish point-wise estimates on $\rho_0.$ In order to do this we argue as in the proof of Proposition 3.3 in \cite{deng2021sharp} to show that 
\begin{align*}
    |I_1|=|\sigma^p-\sum_{i=1}^{\nu}U_i^p|\lesssim V(x) = \sum_{i=1}^{\nu}\left(\frac{\lambda_{i}^{\frac{n+2s}{2}} R^{2s-n}}{\left\langle y_{i}\right\rangle^{4s}} \chi_{\left\{\left|y_{i}\right| \leq R\right\}}+\frac{\lambda_{i}^{\frac{n+2s}{2}} R^{-4s}}{\left\langle y_{i}\right\rangle^{n-2s}} \chi_{\left\{\left|y_{i}\right| \geq R / 2\right\}}\right)
\end{align*}
which along with an a priori estimate and a fixed point argument allows us to show that
\begin{align*}
    |\rho_0(x)|\lesssim W(x) =\sum_{i=1}^{\nu}\left(\frac{\lambda_{i}^{\frac{n-2s}{2}} R^{2s-n}}{\left\langle y_{i}\right\rangle^{2s}} \chi_{\left\{\left|y_{i}\right| \leq R\right\}}+\frac{\lambda_{i}^{\frac{n-2s}{2}} R^{-4s}}{\left\langle y_{i}\right\rangle^{n-4s}} \chi_{\left\{\left|y_{i}\right| \geq R / 2\right\}}\right)
\end{align*}
where $y_i=\lambda_i(x-z_i)$ and
\begin{align*}
R = \min_{i\neq j}\frac{R_{ij}}{2} = \frac{1}{2}\min_{i\neq j}\max_{j=1,\cdots,\nu}\left(\sqrt{\lambda_{i} / \lambda_{j}}, \sqrt{\lambda_{j} / \lambda_{i}}, \sqrt{\lambda_{i} \lambda_{j}}\left|z_{i}-z_{j}\right|\right).    
\end{align*}
This will also give us a control on the energy of $\rho_0$, in particular arguing as in the proof of Proposition 3.9 in \cite{deng2021sharp} one can show that
\begin{align*}
    \norml{(-\Delta)^{s/2}\rho_0}\lesssim \begin{cases}Q|\log Q|^{\frac12}, &\text{if }n=6s,\\ Q^{\frac{p}{2}},&\text{if }n> 6s.\end{cases}
\end{align*}
Here we define the interaction term $Q$ by first defining the interaction between two bubbles $U_i$ and $U_j$
\begin{align}\label{defn: qij interaction term}
    q_{ij} = \left( \frac{\lambda_i}{\lambda_j} + \frac{\lambda_j}{\lambda_i} + \lambda_i\lambda_j|z_i-z_j|^2\right)^{-(n-2s)/2}
\end{align}
and then setting
\begin{align}\label{defn: Q interaction term}
    Q = \max\{q_{ij}:i,j=1,\cdots, \nu\}.
\end{align}
Since the bubbles are $\delta$-interacting, $Q<\delta.$

Step (\rm{iii}). Next, we can estimate the second term $\rho_1$. First observe that $\rho_1$ satisfies
\begin{align}
\left\{\begin{array}{l}
(-\Delta)^s \rho_{1}-\left[\left(\sigma+\rho_{0}+\rho_{1}\right)^{p}-\left(\sigma+\rho_{0}\right)^{p}\right]-\sum_{i=1}^{\nu} \sum_{a=1}^{n+1} c_{a}^{j} U_{j}^{p-1} Z_{j}^{a}-f=0 \\ \innerhs{\rho_1}{Z_j^a}=0, \quad i=1,\cdots,\nu;\quad  a=1, \cdots, n+1\end{array}\right.
\end{align}
Using the above equation and the decomposition 
\begin{align*}
\rho_1 = \sum_{i=1}^{\nu} \beta^{i} U_{i}+\sum_{i=1}^{\nu} \sum_{a=1}^{n+1} \beta_{a}^{i} Z_{i}^{a}+\rho_{2}, 
\end{align*}
we can estimate $\rho_2$ and the absolute value of the scalar coefficients $\beta^i$ and $\beta^i_a$. This allows us to estimate the energy of $\rho_1.$ In particular using similar arguments as in the proofs of Proposition 3.10, 3.11, and 3.12 in \cite{deng2021sharp} one can show that
\begin{align*}
\norml{(-\Delta)^{s/2}\rho_1}\lesssim Q^2+\|f\|_{H^{-s}}.  
\end{align*}

Step (\rm{iv}). In the final step, combining the energy estimates for $\rho_0$ and $\rho_1$ from Step (\rm{ii}) and (\rm{iii}) one can finally arrive at the desired estimate \eqref{fractional-main-estimate} using the same argument as in Lemma 2.1 and Lemma 2.3 in \cite{deng2021sharp}. This will conclude the proof of Theorem \ref{fractional-theorem-main} in the case when the dimension $n\geq 6s.$ 

Even though the proof sketch outlined above seems to carry over in a straightforward manner we decided to include this case for a few reasons. First, we were curious to understand how the fractional parameter $s$ would influence the estimates in our main Theorem. Second, we tried to explain the intuition behind some technical estimates which we hope might be useful for readers not very familiar with the finite-dimensional reduction method (see \cite{finite-dimensional-reduction} for an introduction to this technique). Finally, we encountered an interesting technical issue in Step (\rm{ii}) of the proof. The authors of \cite{deng2021sharp} make use of the following pointwise differential inequality in the case when $s=1$.
\begin{proposition}
The functions $\tilde{W}$ and $\tilde{V}$ satisfy
\begin{align}\label{laplace estimate}
    (-\Delta)^s \tilde{W}\geq \alpha_{n,s} \tilde{V},
\end{align}
where $\alpha_{n,s}>0$ is a constant depending on $n$ and $s.$
\end{proposition}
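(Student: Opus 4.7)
The plan is to reduce this pointwise inequality to a single-bubble computation via scaling and then verify it region by region using the singular-integral representation of the fractional Laplacian. Since both $\tilde{W}$ and $\tilde{V}$ are linear superpositions over the index $i$ and the bubbles are $\delta$-interacting, the contribution of the $i$-th term of $\tilde{W}$ to $(-\Delta)^s$ evaluated in the dominant support region of the $j$-th term (for $i\neq j$) is of strictly smaller order in $Q$ than the diagonal terms, and can therefore be absorbed into an error. Combined with the scaling identity
\begin{equation*}
(-\Delta)^s\bigl[f(\lambda(\cdot-z))\bigr](x) = \lambda^{2s}\bigl[(-\Delta)^s f\bigr](\lambda(x-z)),
\end{equation*}
together with the matching of the prefactors $\lambda_i^{(n-2s)/2}$ and $\lambda_i^{(n+2s)/2}$ in $\tilde{W}$ and $\tilde{V}$ respectively, this reduces the problem to the single-bubble inequality at $\lambda_i=1$, $z_i=0$.

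Next, I would work with smooth substitutes $w(y)$ and $v(y)$ for the individual summands of $W$ and $V$, obtained by interpolating between the two matched profiles at $|y|\sim R$ with a smooth cutoff (their values agree at $|y|=R$ by the computation $R^{2s-n}R^{-2s}=R^{-4s}R^{-(n-4s)}$, and similarly for $v$). Using the representation
\begin{equation*}
(-\Delta)^s w(y) = C_{n,s}\,\mathrm{PV}\!\int_{\R^n}\frac{w(y)-w(z)}{|y-z|^{n+2s}}\,dz,
\end{equation*}
I would split the analysis according to whether $|y|\leq R/4$, $|y|\sim R$, or $|y|\geq 4R$. In the inner regime, $w(y)$ is comparable to $R^{2s-n}\langle y\rangle^{-2s}$, and the scaling relation $(-\Delta)^s|y|^{-\alpha}=c_\alpha|y|^{-\alpha-2s}$ (valid with $c_\alpha>0$ for $0<\alpha<n-2s$) applied at $\alpha=2s$, combined with a standard second-difference bound near the origin, yields $(-\Delta)^s w(y)\gtrsim R^{2s-n}\langle y\rangle^{-4s}$, which matches $v$. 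In the outer regime, $w(y)$ behaves like $R^{-4s}\langle y\rangle^{-(n-4s)}$, and the same power-law identity with $\alpha=n-4s$ gives $(-\Delta)^s w(y)\gtrsim R^{-4s}\langle y\rangle^{-(n-2s)}$, again matching $v$.

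The main obstacle lies in the transition region $|y|\sim R$ and in controlling the non-local cross contributions: the far-field portion of the integral when $|y|$ is small and the near-field portion when $|y|$ is large must both be shown to be dominated by the principal local contribution. The value-matching of the two pieces of $w$ at $|y|=R$ guarantees that the smooth interpolation is pointwise comparable to each profile in a neighborhood of the transition, so that no sign cancellations occur in the integrand, and the decay of $w$ at infinity being no slower than $|y|^{-(n-4s)}$ (with $n-4s>2s$ exactly when $n>6s$, and logarithmic borderline at $n=6s$) ensures that the far-field tails of the singular integral are controlled. Assembling these estimates produces a uniform constant $\alpha_{n,s}>0$ for which the single-bubble inequality holds; summing over $i$ and inverting the rescaling then yields the proposition.
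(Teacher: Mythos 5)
Your proposal identifies the right reduction (scaling to a single bubble at $\lambda=1,z=0$ and region-by-region analysis of the singular integral), but it misses the two ideas that carry the paper's proof, and the step where you try to fill the gap yourself is not sound. First, $\tilde W$ is built from $F(a,b)=\tfrac{a+b}{2}-\sqrt{(\tfrac{a-b}{2})^2+\mu ab}$, and the paper exploits that this $F$ is concave and $1$-homogeneous: for concave $F$, $(-\Delta)^s F(a,b)(x)\geq \partial_a F\,(-\Delta)^s a(x)+\partial_b F\,(-\Delta)^s b(x)$, which cleanly reduces the problem to the pure profiles $a_j=R^{2s-n}\langle y_j\rangle^{-2s}$ and $b_j=R^{-4s}\langle y_j\rangle^{4s-n}$, with the weights $\partial_a F,\partial_b F$ then recombined into $\tilde V$ via homogeneity. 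You replace $F$ by an unspecified smooth cutoff interpolation; without concavity you have no such pointwise inequality for $(-\Delta)^s$ of the glued profile, and the transition-region analysis you defer to is exactly where the difficulty sits.

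Second, and more seriously, your treatment of the single-profile inequality $(-\Delta)^s(1+|y|^2)^{-s}\gtrsim(1+|y|^2)^{-2s}$ does not go through. Your claim that the value-matching of the two pieces of $w$ at $|y|=R$ ``guarantees that no sign cancellations occur in the integrand'' is false: $\tfrac{w(y)-w(z)}{|y-z|^{n+2s}}$ is negative whenever $w(z)>w(y)$, i.e. on roughly the half-space $|z|<|y|$, regardless of how the two pieces are matched. Likewise, invoking $(-\Delta)^s|y|^{-\alpha}=c_\alpha|y|^{-\alpha-2s}$ together with ``a standard second-difference bound near the origin'' conflates $|y|^{-2s}$ with $\langle y\rangle^{-2s}$ and treats a genuinely non-local quantity as if it were determined by local behaviour; the outer tail of $w$, which decays like $|y|^{-(n-4s)}$, contributes to $(-\Delta)^s w$ at every point and can in principle destroy the sign. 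The paper flags this very inequality as new and not derivable from standard convexity-type fractional inequalities, and proves it instead by computing $(-\Delta)^s(1+|y|^2)^{-s}$ in closed form as a Gauss hypergeometric function (Table 1 of Kwa\'snicki), using the Euler integral representation to get nonnegativity for $n>4s$, and then a zero-counting argument for ${}_2F_1$ after the Pfaff transformation to upgrade this to strict positivity with the correct $(1+|y|^2)^{-2s}$ decay. Some explicit, verifiable mechanism of this kind is needed; the sketch as written would not produce a proof.
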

For precise definitions of $\Tilde{W}$ and $\Tilde{V}$ see \eqref{tilde w and v defn}. After some straightforward reductions of $\Tilde{W}$ and $\Tilde{V}$, it suffices to establish
\begin{align}\label{frac-ineq}
    (-\Delta)^s[(1+|x|^2)^{-s}]\geq \alpha_{n,s} [1+|x|^2]^{-2s}.
\end{align}
for some constant $\alpha_{n,s}>0.$ By direct computations the above inequality is true in the local setting $s=1$ however it is not clear whether it generalizes to the fractional case as well. We show that this is indeed true. 

Such pointwise differential inequality seems to be new and for instance, does not follow from the well-known pointwise differential inequality 
\begin{align*}
(-\Delta)^{s}(\phi(f))(x) \leq \phi^{\prime}(f(x)) \cdot (-\Delta)^{s} f(x)
\end{align*}
where $\phi\in C^1(\R)$ is convex and $f\in \mathcal{S}(\R^n)$. We prove \eqref{frac-ineq} inequality directly by using an integral representation of the fractional derivative of $\frac{1}{(1+|x|^2)^s}$ and by counting the number of zeros of a certain Hypergeometric function.
\section{Case when Dimension $n<6s $}
The goal of this section is to prove Theorem \ref{fractional-theorem-main} in the case when the dimension satisfies $2s<n<6s$. The proof is similar to the proof of Theorem 3.3 in \cite{FigGla20}, however, we need to make some modifications to establish the spectral and interaction integral term estimate. We first prove Theorem \ref{fractional-theorem-main} assuming that the desired spectral estimate holds.
\begin{proof}
Consider the following minimization problem 
\begin{align*}
\normhs{u -\sum_{i=1}^{\nu}\alpha_i  U[z_i, \lambda_i] }= \min_{\bar{z}_{1}, \ldots, \bar{z}_{\nu} \in \mathbb{R}^n \atop \bar{\lambda}_{1}, \ldots, \bar{\lambda}_{\nu}\in \mathbb{R}_{+}, \tilde{\alpha}_1,\ldots,\tilde{\alpha}_{\nu} \in \mathbb{R}}\normhs{ u-\sum_{i=1}^{\nu}\tilde{\alpha_i}  U[\tilde{z}_i, \tilde{\lambda}_i]}
\end{align*}
where $U_i = U[z_i, \lambda_i]$ and $\sigma = \sum_{i=1}^{\nu} \alpha_i U_i$ is the sum of Talenti Bubbles closest to the function $u$ in the $\dot{H}^{s}(\R^n)$ norm. Thus if we set
\begin{align*}
    u = \sigma + \rho
\end{align*}
we immediately have that $\normhs{\rho}\leq \delta$ and since the family $(\tilde{U}_i)_{i=1}^{\nu}$ is $\delta-$interacting we also deduce that the family $(\alpha_i, U_i)_{i=1}^{\nu}$ is $\delta'-$interacting where $\delta'\to 0$ as $\delta \to 0.$ 

Furthermore, for each bubble $U_i$ with $1\leq i \leq \nu$, we have the following orthogonality conditions as before
\begin{align}
\innerhs{\rho}{U_i} &= 0 \label{fractional-bubble-ortho-func}\\
\innerhs{\rho}{\partial_{\lambda}U_i} &= 0 \label{fractional-bubble-ortho-partial-scalar}\\
\innerhs{\rho}{\partial_{z_j}U_i} &= 0\label{fractional-bubble-ortho-partial-z}
\end{align}
for any $1\leq j\leq n.$ Using Lemma \ref{fractional-spectral-lemma}, we deduce that $U, \partial_{\lambda} U$ and $\partial_{z_j} U$ are eigenfunctions for the operator $\frac{(-\Delta)^s}{U^{p-1}}$ and thus for each $1\leq i,j\leq n$, we get
\begin{align}
   \int_{\R^n} \rho \cdot U_i^p &= 0 \label{fractional-rho-Up-ortho-cond},\\
    \int_{\R^n} \rho \cdot  \partial_{\lambda} U_i U_i^{p-1} &=0, \\
    \int_{\R^n} \rho \cdot  \partial_{z_j} U_i U_i^{p-1} &=0.
\end{align}
Next using integration by parts and \eqref{fractional-bubble-ortho-func}, we get
\begin{align*}
    \normhs{\rho}^2=\innerhs{\rho}{\rho} &= \innerhs{\rho}{u-\sigma} =\innerhs{\rho}{u} = \innerl{\rho}{(-\Delta)^s u} \\
    &= \innerl{\rho}{(-\Delta)^s u-u|u|^{p-1}} + \innerl{\rho}{u|u|^{p-1}}\\
    &\leq \normhs{\rho} \normd{(-\Delta)^s u-u|u|^{p-1}} + \int_{\R^n} u|u|^{p-1}\rho.
\end{align*}
In order to estimate the second term, consider the following two estimates
\begin{align*}
    |(\sigma + \rho)|\sigma + \rho|^{p-1} - \sigma|\sigma|^{p-1}| &\leq p|\sigma|^{p-1} |\rho| + C_{n,s}(|\sigma|^{p-2} |\rho|^2 + |\rho|^{p}) \\
    | \sigma|\sigma|^{p-1} - \sum_{i=1}^{\nu} \alpha_i |\alpha_i|^{p-1}U_i^p| &\lesssim \sum_{1\leq i\neq j\leq \nu} U_i^{p-1} U_j,
\end{align*}
where $C_{n,s}>0$ is some constant depending on $n$ and $s.$ Thus using triangle inequality and the orthogonality condition \eqref{fractional-rho-Up-ortho-cond}, we get
\begin{align}\label{fractional-prelim-estimate}
\int_{\R^n} \rho u|u|^{p-1} &\leq  p\int_{\R^n} |\sigma|^{p-1} \rho^2 \nonumber \\
&\quad + C_{n,\nu}\left(\int_{\R^n}|\sigma|^{p-2} |\rho|^3 + \int_{\R^n}|\rho|^{p+1}+\sum_{1\leq i\neq j\leq \nu} \int_{\R^n}\rho U_i^{p-1} U_j\right).
\end{align}
For the first term by Lemma \ref{fractional-spectral-lemma}, we get
\begin{align}\label{fractional-bubble-spectral-estimate}
    p \int_{\R^n} |\sigma|^{p-1} \rho^2 \leq \tilde{c} \normhs{\rho}^2
\end{align}
provided $\delta'$ is small, where $\tilde{c} = \tilde{c}(n,\nu)<1$ is a positive constant. Using Hölder and Sobolev inequality for the remainder terms with the constraint $n<6s$ yields
\begin{align}\label{fractional-easy-terms}
\begin{aligned}
&\int_{\mathbb{R}^{n}} |\sigma|^{p-2}|\rho|^{3} \leq\|\sigma\|_{L^{2^{*}}}^{p-2}\|\rho\|_{L^{2^{*}}}^{3} \lesssim \normhs{\rho} ^{3},\\
&\int_{\mathbb{R}^{n}}|\rho|^{p+1} \lesssim \normhs{\rho}^{p+1} ,\\
&\int_{\mathbb{R}^{n}}\rho U_{i}^{p-1} U_{j} \lesssim \normc{\rho} \normcd{U_i^{p-1}U_j} \lesssim \normhs{\rho} \normcd{U_i^{p-1}U_j}.
\end{aligned}
\end{align}
For the last estimate using an integral estimate similar to Proposition B.2 in Appendix B of \cite{FigGla20}, we get
\begin{align}\label{fractional-dimension-dependant-estimate}
   \norm{U_i^{p-1}U_j}_{L^{2n/(n+2s)}} &= \left(\int_{\R^n} U_i^{2p n/(n+2s)}U_j^{2n/(n+2s)}\right)^{(n+2s)/2n}  \approx Q_{ij}^{(n-2s)/2}\nonumber\\
   & \approx \int_{\R^n} U_i^{p} U_j,
\end{align}
where 
$$Q_{ij} = \min \left(\frac{\lambda_{j}}{\lambda_{i}},\frac{\lambda_{i}}{\lambda_{j}}, \frac{1}{\lambda_{i} \lambda_{j}\left|z_{i}-z_{j}\right|^{2}}\right), i\neq j.
$$
Furthermore, for the interaction term using the same argument as in Proposition 3.11 in \cite{FigGla20},  we have that for any $\epsilon > 0$, there exists a small enough $\delta'>0$ such that
\begin{align}\label{fractional-bubble-interaction-estimate}
    \int_{\R^n} U_i^p U_j \lesssim \epsilon\normhs{\rho}+\normd{(-\Delta)^s u - u|u|^{p-1}}+\normhs{\rho}^{2},
\end{align}
and 
\begin{align}\label{fractional-bubble-coefficient-estimate}
    |\alpha_i - 1| \lesssim \epsilon\normhs{\rho}+\normd{(-\Delta)^s u - u|u|^{p-1}}+\normhs{\rho}^{2}.
\end{align}
Thus using \eqref{fractional-bubble-spectral-estimate}, \eqref{fractional-easy-terms}, \eqref{fractional-dimension-dependant-estimate},  and \eqref{fractional-bubble-interaction-estimate} into \eqref{fractional-prelim-estimate}, we get
\begin{align*}
    \normhs{\rho}^2 \leq (\tilde{c}+ C\epsilon) \normhs{\rho}^2 + C_0(\normhs{\rho}^3 + \normhs{\rho}^{p+1} + \normhs{\rho}\normd{(-\Delta)^s u - u|u|^{p-1}}).
\end{align*}
Therefore if we choose $\epsilon$ such that the quantity $ (\tilde{c}+ C\epsilon) <1$ then we can absorb the term $(\tilde{c}+ C\epsilon) \normhs{\rho}^2$ on the left hand side of the inequality to obtain
\begin{align*}
    \normhs{\rho}^2 \lesssim \normhs{\rho}^3 + \normhs{\rho}^{p+1} + \normhs{\rho}\normd{(-\Delta)^s u - u|u|^{p-1}}.
\end{align*}
Assuming w.l.o.g. that the quantity $\normhs{\rho}\ll 1$, we can now deduce that
\begin{align}\label{fractional-final-rho-estimate}
    \normhs{\rho} \lesssim \normd{(-\Delta)^s u - u|u|^{p-1}}.
\end{align}
Finally, if we decompose $u$ in the following manner
$$u = \sum_{i=1}^{\nu} U_i + \sum_{i=1}^{\nu} (\alpha_i - 1) U_i + \rho = \sum_{i=1}^{\nu} U_i + \rho',$$
then using \eqref{fractional-bubble-coefficient-estimate} and \eqref{fractional-final-rho-estimate}, we get
\begin{align*}
    \normhs{\rho'} \lesssim \sum_{i=1}^{\nu} |\alpha_i-1| + \normhs{\rho} \lesssim \normd{(-\Delta)^s u - u|u|^{p-1}}.
\end{align*}
Thus $U_1, U_2, \cdots, U_{\nu}$ is the desired family of Talenti bubbles.
\end{proof}
We now turn to prove the spectral estimate \eqref{fractional-bubble-spectral-estimate}. For this, we begin by constructing bump functions that will localize the linear combination of bubbles.

\subsection{Construction of Bump Functions}
The construction of bump functions allows us to localize the sum of bubbles $\sigma$ such that in a suitable region we can assume $\sigma \Phi_i \approx \alpha_i U_i$ for some bubble $U_i$ and associated bump function $\Phi_i.$ As a preliminary step we start by constructing cut-off functions with suitable properties.
\begin{lemma}[Construction of Cut-off Function]\label{fractional-cutoff-lemma}
Let $n > 2s.$ Given a point $\overline{x}\in \R^n$ and two radii $0<r< R$, there exists a Lipschitz cut-off function $\varphi=\varphi_{\overline{x}, r, R}:\R^n \to [0,1]$ such that the following holds
\begin{enumerate}
    \item $\varphi \equiv 1$ on $B(\overline{x}, r).$
    \item $\varphi \equiv 0$ outside $B(\overline{x}, R).$
    \item $\int_{\mathbb{R}^n} |(-\Delta)^{s/2} \phi|^{n/s} \lesssim \log(R/r)^{1-n/s}.$
\end{enumerate}
\end{lemma}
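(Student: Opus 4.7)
The plan is to build $\varphi$ as a dyadic average of standard cutoffs, exploiting the scale invariance of the critical space $\dot W^{s,n/s}$. Fix a smooth radial template $\chi:\R^n\to[0,1]$ with $\chi\equiv 1$ on $B_1$ and $\chi\equiv 0$ outside $B_2$, and set $N:=\lceil\log_2(R/r)\rceil$, $\rho_k:=2^k r$ for $0\le k\le N-1$, and $\phi_k(x):=\chi((x-\bar{x})/\rho_k)$. The cutoff will be
$$\varphi(x):=\frac{1}{N}\sum_{k=0}^{N-1}\phi_k(x),$$
which is smooth (hence Lipschitz), equals $1$ on $B(\bar{x},r)$ since every $\phi_k$ does, and vanishes outside $B(\bar{x},R)$ after a harmless doubling of $R$ absorbed into the constant. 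So (i) and (ii) are immediate and all the content of the lemma is in (iii).

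For (iii) the key ingredient is the scaling identity
$$(-\Delta)^{s/2}\phi_k(x)=\rho_k^{-s}\Psi\bigl((x-\bar{x})/\rho_k\bigr),\qquad \Psi:=(-\Delta)^{s/2}\chi,$$
together with the decay bound $|\Psi(y)|\lesssim (1+|y|)^{-n-s}$, which follows from the pointwise singular-integral representation of $(-\Delta)^{s/2}$ applied to the compactly supported $\chi$. Substituting yields
$$|(-\Delta)^{s/2}\phi_k(x)|\lesssim \frac{\rho_k^n}{(\rho_k+|x-\bar{x}|)^{n+s}}.$$
The crucial point is that when summing this over the $N$ dyadic scales, for each fixed $x$ only the $O(1)$ scales $k$ with $\rho_k\approx |x-\bar{x}|$ dominate, thanks to the geometric decay both as $\rho_k\ll |x-\bar{x}|$ and as $\rho_k\gg |x-\bar{x}|$. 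A short case analysis gives
$$|(-\Delta)^{s/2}\varphi(x)|\lesssim \frac{1}{N}\begin{cases} r^{-s}, & |x-\bar{x}|\le r,\\ |x-\bar{x}|^{-s}, & r\le |x-\bar{x}|\le R,\\ R^n/|x-\bar{x}|^{n+s}, & |x-\bar{x}|\ge R.\end{cases}$$

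Raising this to the power $n/s$ and integrating in polar coordinates, the inner ball and the exterior region each contribute $O(1/N^{n/s})$, while the annulus produces the logarithmic factor $\int_r^R t^{-n}\,t^{n-1}\,dt=\log(R/r)$. Combining and writing $L:=\log(R/r)\approx N$ yields
$$\int_{\R^n}|(-\Delta)^{s/2}\varphi|^{n/s}\,dx\lesssim \frac{1+L+1}{N^{n/s}}\lesssim L\cdot L^{-n/s}=\log(R/r)^{1-n/s},$$
which is the desired estimate. The main obstacle is precisely the near-orthogonality of the $N$ dyadic pieces at the level of the $L^{n/s}$ norm: the naive triangle inequality gives only $O(1)$, and even a soft Riesz-potential plus HLS argument applied to a radially logarithmic cutoff produces only $L^{(1-n)/s}$, missing the sharp rate by a factor of $L^{(1-s)/s}$. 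Capturing the correct exponent $1-n/s$ is what forces the dyadic construction and the polar split above; it is the only nontrivial step.
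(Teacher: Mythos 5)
Your proposal is correct and takes a genuinely different route from the paper's. The paper builds the cutoff explicitly as the truncated logarithmic profile $\varphi(x)=\log(R/|x|)/\log(R/r)$ on the annulus, and then estimates $(-\Delta)^{s/2}\varphi$ by hand from the singular-integral representation, splitting $\R^n$ into five regions, telescoping the contribution of $\sim\log(R/r)$ dyadic shells inside the integral, and invoking a table formula of Kwa\'sniciki for the annular region $r<|x|<R$. You instead discretize the same log profile as the normalized sum $\tfrac{1}{N}\sum_{k<N}\chi(\cdot/\rho_k)$ of $N\approx\log(R/r)$ rescalings of one fixed smooth template, compute $(-\Delta)^{s/2}$ of each piece exactly once via the exact scaling identity $(-\Delta)^{s/2}\phi_k=\rho_k^{-s}\Psi(\cdot/\rho_k)$ with $|\Psi(y)|\lesssim(1+|y|)^{-n-s}$, and then resum. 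The resulting pointwise bound on $(-\Delta)^{s/2}\varphi$ agrees with the one the paper obtains, and from there the $L^{n/s}$ computation in polar coordinates (giving $O(1)$ from the core and tail, and $\log(R/r)$ from the annulus) is the same. What your route buys is that all the hard analysis is localized in a single, scale-independent fact about the fixed template $\chi$, so the region-by-region case analysis and the Kwa\'sniciki formula disappear; what it costs is the small bookkeeping step of noting that the outermost piece $\phi_{N-1}$ may spill slightly past $B(\bar x,R)$, which you correctly flag as a harmless doubling of $R$. One minor caveat: the construction produces a \emph{smooth} (in particular Lipschitz) cutoff, which is fine since the lemma only asks for Lipschitz, but it is not identical to the paper's $\varphi$ --- it is the piecewise-dyadic staircase approximating it --- so one should not conflate the two when they are referenced downstream; since Lemma~\ref{fractional-bump-lemma} only uses properties (i)--(iii), this is harmless.
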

\begin{proof}
For simplicity, we set $\overline{x}=0$. Then we define the function $\varphi:\R^n\to [0,1]$ as follows
\begin{align}
\varphi(x):=\left\{\begin{array}{ll}F(r), &\quad |x| \leq r \\ 
F(|x|), &\quad r <|x|\leq R \\ 
F(R), &\quad |x|>R\end{array}\right.
\end{align}
where $F(x)=F(|x|)=\frac{\log (R/|x|)}{\log (R/r)}.$
The function $\varphi$ clearly satisfies the first and second conditions. Next, we estimate the fractional gradient. Recall the formula for the fractional derivative
\begin{align*}
    (-\Delta)^{s/2}\varphi(x) = \int_{\mathbb{R}^n} \frac{\varphi(x)-\varphi(y)}{|x-y|^{n+s}}dy.
\end{align*}
When $|x|>2R$ then since the integral is non-zero when $|y|<R$ we have $|x-y|>\frac{1}{2}|x|$ and therefore \begin{align*}
    |(-\Delta)^{s/2}\varphi(x)|\lesssim \log^{-1}(R/r)\frac{R^n}{|x|^{n+s}}.
\end{align*}
Thus
\begin{align*}
\int_{|x|>2R} |(-\Delta)^{s/2}\varphi(x)|^{n/s} dx \lesssim  \log^{-n/s}(R/r).
\end{align*}
On the other hand when $|x|<2R$ then we have three possible cases. When $|x|<r$ then we consider three cases $r<|y|<2r$ and $2r<|y|<R$ and $|y|>R.$ Since $|x|<r$ and $r<|y|<2r$ implies that $|x-y|<3r$ we have
\begin{align*}
  &\int_{|x|<r} |(-\Delta)^{s/2}\varphi(x)|^{n/s} dx \leq \int_{|x|<r} \left(\int_{|x-y|<3r }\frac{\left|\varphi(x)-\varphi(y)\right|}{|x-y|^{n+s}}dy\right)^{n/s} dx\\
  &\quad +\int_{|x|<r} \left(\int_{2r<|y|<R}\frac{\left|\varphi(x)-\varphi(y)\right|}{|x-y|^{n+s}}dy\right)^{n/s}dx +\int_{|x|<r}\left(\int_{|y|>R}\frac{\left|\varphi(x)-\varphi(y)\right|}{|x-y|^{n+s}}dy\right)^{n/s}dx.
\end{align*}
Denote
\begin{align*}
\mathrm{I} &= \int_{|x-y|<3r }\frac{\left|\varphi(x)-\varphi(y)\right|}{|x-y|^{n+s}}dy,\quad  \mathrm{II} = \int_{2r<|y|<R}\frac{\left|\varphi(x)-\varphi(y)\right|}{|x-y|^{n+s}}dy\\
\mathrm{III} &= \int_{|y|>R}\frac{\left|\varphi(x)-\varphi(y)\right|}{|x-y|^{n+s}}dy.
\end{align*}
Estimating each term
\begin{align*}
\rm{I} &\lesssim \frac{\log^{-1}(R/r)}{r}\int_{|x-y|<3r} \frac{1}{|x-y|^{n+s-1}}dy\lesssim  \frac{\log^{-1}(R/r)}{r^s}  \\
\rm{II} &= \log^{-1}(R/r) \int_{ 2r<|y|<R} \frac{\log(|y|)-\log(r)}{|x-y|^{n+s}}dy \\
&= \log^{-1}(R/r) \sum_{k=1}^{K}\int_{ 2^kr<|y|<2^{k+1}r} \frac{\log(|y|)-\log(r)}{|x-y|^{n+s}}dy\\
&=  \frac{\log^{-1}(R/r)}{r^{n+s}} \sum_{k=1}^{K}\int_{ 2^kr<|y|<2^{k+1}r} \frac{(k+1)\log 2}{(2^k-1)^{n+s}}dy\\
&\lesssim \frac{\log^{-1}(R/r)}{r^s} \sum_{k=1}^{K}\frac{(k+1)2^{kn}}{(2^k-1)^{n+s}}\\
&\lesssim \frac{\log^{-1}(R/r)}{r^s} K \lesssim \frac{1}{r^s}\\
\rm{III} &\lesssim \frac{1}{R^s},
\end{align*}
where we used the fact the $\varphi$ is Lipschitz for estimating $\mathrm{I}$ and $K\approx \log(R/r)$. This implies that
\begin{align*}
    \int_{|x|<r}  |(-\Delta)^{s/2}\varphi(x)|^{n/s} dx &\lesssim \log^{-n/s}(R/r) + 1 + \frac{r^n}{R^n} \\
    &\lesssim \log^{1-n/s}(R/r).
\end{align*}
When $r<|x|<R$ then we can use the expression in Table 1 of \cite{Kwasnicki} to deduce that
\begin{align*}
    |(-\Delta)^{s/2}\varphi(x)| \lesssim \log^{-1}(R/r)\frac{1}{|x|^s}.
\end{align*}
\begin{align*}
\int_{r<|x|<R} |(-\Delta)^{s/2}\varphi(x)|^{n/s} dx \lesssim  \log^{1-n/s}(R/r).
\end{align*}
Following a similar argument as in the case when $|x|<r$ we can estimate in the regime when $R<|x|<2R$ to get 
\begin{align*}
  &\int_{R<|x|<2R} |(-\Delta)^{s/2}\varphi(x)|^{n/s}dx \lesssim  \log^{1-n/s}(R/r).
\end{align*}
Thus combining the above estimates we get
\begin{align*}
    \int_{\mathbb{R}^n} |(-\Delta)^{s/2}\varphi(x)|^{n/s}dx
    &\lesssim \log^{1-n/s}(R/r).
\end{align*}

\end{proof}
The construction of cut-off functions allows us to deduce the following lemma whose proof is identical to the proof of Lemma $3.9$ in \cite{FigGla20}, except for the $L^{n/s}$ estimate which is a consequence of property (\rm{iii}) in Lemma \ref{fractional-cutoff-lemma}.

\begin{lemma}[Construction of Bump Function]\label{fractional-bump-lemma}
Given dimension $n>2s$, number of bubbles $\nu\geq 1$ and parameter $\hat{\epsilon}>0$ there exists a $\delta = \delta(n, \nu, \hat{\epsilon},s) > 0$ such that for a $\delta-$interacting family of Talenti bubbles $(U_i)_{i=1}^{\nu}$ where $U_i = U[z_i, \lambda_i]$ there exists a family of Lipschitz bump functions $\Phi_i:\R^n \to [0,1]$ such that the following hold,
\begin{enumerate}
\item Most of the mass of the function $U_i^{p+1}$ is in the region $\{\Phi_i = 1\}$ or more precisely, 
\begin{align}\label{fractional-bump-lemma-bubble-mass-conc}
\int_{\left\{\Phi_{i}=1\right\}} U_{i}^{p+1} \geq(1-\hat{\epsilon}) S^{n/s}.
\end{align}
\item The function $\Phi_i$ is much larger than any other bubble in the region $\{\Phi_i > 0\}$ or more precisely, 
    \begin{align}\label{fractional-bump-lemma-epsilon U_i}
        \hat{\epsilon} U_{i}>U_{j}
    \end{align}
    for each index $j\neq i.$
    \item The $L^{n/s}$ norm of the the function $(-\Delta)^{s/2} \Phi_{i}$ is small, or more precisely, 
    \begin{align}\label{fractional-bump-lemma-Ln-norm}
        \left\|(-\Delta)^{s/2} \Phi_{i}\right\|_{L^{n/s}} \leq \hat{\epsilon}.
    \end{align}
    \item Finally, for all $j\neq i$ such that $\lambda_j \leq \lambda_i$, we have
    \begin{align}\label{fractional-bump-lemma-sup/inf}
        \frac{\sup _{\left\{\Phi_{i}>0\right\}} U_{j}}{\inf _{\left\{\Phi_{i}>0\right\}} U_{j}} \leq 1+\hat{\epsilon}.
    \end{align}
\end{enumerate}
\end{lemma}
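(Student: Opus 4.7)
The plan is to mimic the proof of Lemma 3.9 in \cite{FigGla20} step by step, with the sole structural change being that the standard Lipschitz cut-off used there is replaced by the fractional cut-off produced by Lemma \ref{fractional-cutoff-lemma}. Concretely, for each $i$ I would choose two radii $0 < r_i < R_i$ centered at $z_i$ and set $\Phi_i := \varphi_{z_i, r_i, R_i}$. The radii are to be tuned so that $B(z_i, r_i)$ already captures nearly all of the $L^{p+1}$-mass of $U_i$ (which gives \eqref{fractional-bump-lemma-bubble-mass-conc}), while $B(z_i, R_i)$ is still small enough that $U_i$ strictly dominates every other bubble on this ball (which gives \eqref{fractional-bump-lemma-epsilon U_i}). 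Property \eqref{fractional-bump-lemma-sup/inf} follows from the observation that, for $j \neq i$ with $\lambda_j \leq \lambda_i$, the bubble $U_j$ varies slowly on the scale $R_i$, so that its oscillation on $\{\Phi_i > 0\}$ is negligible once $\delta$ is small; all three points rely only on the polynomial decay of the Talenti profile and on the $\delta$-interaction condition \eqref{delta-interacting-bubble}, exactly as in the local case.

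The quantitative input that makes all of this work is that the $\delta$-interaction assumption allows the ratio $R_i/r_i$ to be taken to diverge as $\delta \to 0$. After rescaling so that $\lambda_i = 1$, one may take $r_i$ to be a large but fixed multiple of the concentration scale and $R_i$ of order $\delta^{-c}$ for a small $c > 0$: the bound $Q_{ij} \leq \delta$ guarantees that $U_i$ still dominates every $U_j$ on $B(z_i, R_i)$, while $\log(R_i / r_i) \gtrsim \log(1/\delta) \to \infty$. Substituting this into property (iii) of Lemma \ref{fractional-cutoff-lemma} gives
\begin{align*}
\|(-\Delta)^{s/2}\Phi_i\|_{L^{n/s}}^{n/s} \lesssim \log(R_i/r_i)^{1-n/s} \lesssim \log(1/\delta)^{1-n/s},
\end{align*}
which can be made smaller than $\hat{\epsilon}^{n/s}$ by shrinking $\delta$, thereby yielding \eqref{fractional-bump-lemma-Ln-norm}.

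The step I expect to be the main obstacle is precisely the $L^{n/s}$ estimate on $(-\Delta)^{s/2}\Phi_i$, which is why it has already been isolated in Lemma \ref{fractional-cutoff-lemma}. In the local case $|\nabla \Phi_i|$ is supported on the annulus $B(z_i, R_i) \setminus B(z_i, r_i)$ and is estimated trivially; for the fractional Laplacian, by contrast, $(-\Delta)^{s/2}\Phi_i$ carries polynomial tails outside the annulus and even inside one must separate near-diagonal and far contributions, which is exactly the region-splitting calculation carried out above. Granted that cut-off lemma, no further non-local subtlety appears: the radii are chosen by the same inductive bookkeeping on the bubbles (for instance ordering by scale $\lambda_i$) used in the proof of Lemma 3.9 of \cite{FigGla20}, and the four conclusions follow.
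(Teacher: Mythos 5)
Your overall strategy is the same as the paper's: both defer to the proof of Lemma 3.9 in Figalli--Glaudo and treat the $L^{n/s}$ bound as the only new ingredient, supplied by property \emph{(iii)} of Lemma \ref{fractional-cutoff-lemma} via $\|(-\Delta)^{s/2}\Phi_i\|_{L^{n/s}}^{n/s} \lesssim \log(R_i/r_i)^{1-n/s} \to 0$ as $\delta\to 0$ (using $n/s>2$). That part of the reasoning is correct.

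The explicit choice $\Phi_i := \varphi_{z_i,r_i,R_i}$, a \emph{single} radially symmetric cut-off centered at $z_i$, however, cannot satisfy \eqref{fractional-bump-lemma-epsilon U_i} in the bubble-tower regime. If $j\neq i$ has $z_j$ close to $z_i$ and $\lambda_j\gg\lambda_i$, then $U_j$ has a core of height $\sim\lambda_j^{(n-2s)/2}\gg\lambda_i^{(n-2s)/2}\sim U_i(z_i)$ sitting inside $B(z_i,r_i)$, where $\Phi_i\equiv 1$; so $\hat{\epsilon}\,U_i>U_j$ fails there, and shrinking $r_i$ below the scale $1/\lambda_j$ to avoid that core would destroy \eqref{fractional-bump-lemma-bubble-mass-conc} for $U_i$ (its $L^{p+1}$ mass lives at scale $1/\lambda_i\gg 1/\lambda_j$). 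Indeed property \emph{(ii)} forces the supports of the $\Phi_i$ to be pairwise disjoint, which is impossible if each is identically $1$ on a ball about its own, nearly coincident, center. The actual construction in Figalli--Glaudo takes $\Phi_i$ as a \emph{product} of cut-offs: one that equals $1$ near $z_i$ at scale $1/\lambda_i$, multiplied by factors vanishing on small balls around $z_j$ for each more concentrated bubble $j$, so that those cores are excised from $\{\Phi_i>0\}$. Replacing each factor by the fractional cut-off of Lemma \ref{fractional-cutoff-lemma} is the construction you actually need; with that, properties \emph{(i)}--\emph{(iv)} go through by the bookkeeping you describe.
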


\subsection{Spectral Properties of the Linearized Operator}\label{section:spectral}
Consider the linearized equation,
\begin{align*}
    (-\Delta)^s \phi = pU^{p-1} \phi 
\end{align*}
where $\phi \in \dot{H}^s$ and $U(x)=U[z,\lambda](x).$ By exploiting the positivity of the second variation of $\delta(u) = \normhs{u}^2 - S^2 \normc{u}^2$ around the bubble $U$ and using Theorem 1.1 in \cite{PinoSpectral}, we can deduce the following result
\begin{lemma}\label{fractional-spectral-lemma}
The operator $\mathcal{L}=\frac{(-\Delta)^s}{U^{p-1}}$ has a discrete spectrum with increasing eigenvalues $\{\lambda_{i}\}_{i=1}^{\infty}$ such that,
\begin{enumerate}
    \item The first eigenvalue $\alpha_{1} = 1$ with eigenspace $H_1 = \operatorname{span}(U).$
    \item The second eigenvalue $\alpha_{2} = p$ with eigenspace \newline $H_2 =\operatorname{span}(\partial_{z_1}U,\partial_{z_2}U,\cdots, \partial_{z_n}U,\partial_{\lambda}U)$.
\end{enumerate}
\end{lemma}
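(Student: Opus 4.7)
The plan is to set up $\mathcal{L}=(-\Delta)^s/U^{p-1}$ as a self-adjoint operator with compact resolvent, identify candidate eigenfunctions by direct computation from the bubble equation, and then combine the non-negativity of the second variation of the Sobolev deficit $\delta$ with the spherical-harmonic decomposition in Theorem~1.1 of \cite{PinoSpectral} to rule out any hidden eigenvalues between $1$ and $p$.

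First, I would view the eigenvalue problem $(-\Delta)^s\phi=\alpha\, U^{p-1}\phi$ variationally, as the spectral problem for the Dirichlet form $\phi\mapsto\normhs{\phi}^2$ on the weighted Hilbert space $L^2(U^{p-1}\,dx)$. Since $U^{p-1}(x)\simeq(1+|x|^2)^{-2s}$, the stereographic projection identifies $\dot{H}^s(\R^n)$ with $H^s(S^n)$ and, up to the conformal factor, $L^2(U^{p-1}\,dx)$ with $L^2(S^n)$, so the classical Rellich embedding yields compactness of $\mathcal L^{-1}$ and hence a discrete increasing sequence of eigenvalues $\alpha_1\le\alpha_2\le\cdots\to\infty$.

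Second, the bubble equation $(-\Delta)^sU=U^p$ rewrites as $\mathcal L U=U$, so $1$ is an eigenvalue with the strictly positive eigenfunction $U$. A Krein--Rutman/Perron--Frobenius argument applied to the compact positive operator $T\phi=(-\Delta)^{-s}(U^{p-1}\phi)$, whose integral kernel is a positive Riesz potential, then forces the lowest eigenvalue of $\mathcal L$ to be simple with a positive eigenfunction, giving $\alpha_1=1$ and $H_1=\operatorname{span}(U)$. Differentiating the identity $(-\Delta)^sU[z,\lambda]=U[z,\lambda]^p$ in each translation parameter $z_a$ and in the scale parameter $\lambda$ produces $n+1$ linearly independent functions $\partial_{z_a}U,\partial_\lambda U$ that all satisfy $\mathcal L\phi=p\phi$, so $p$ is an eigenvalue of multiplicity at least $n+1$.

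To conclude that $\alpha_2=p$ with exactly this multiplicity and that no eigenvalue lies strictly between $1$ and $p$, I would invoke Theorem~1.1 of \cite{PinoSpectral}: the conformal reduction to $S^n$ diagonalizes the linearized fractional bubble operator via spherical harmonics and the Funk--Hecke formula, and the first two spectral levels match the constant and degree-one harmonics, of multiplicities $1$ and $n+1$ respectively. Consistently with the hint in the statement, this can be cross-checked by expanding the Sobolev deficit $\delta(u)=\normhs{u}^2-S^2\normc{u}^2\ge 0$ to second order at the minimizer $U$: since $\delta(U)=0$ and $\delta\ge 0$, projecting onto the orthogonal complement (in $\dot H^s$) of the tangent space of the minimizing manifold (generated by $U$ together with $\partial_{z_a}U$ and $\partial_\lambda U$) produces the inequality $\normhs{\phi}^2\ge p\int_{\R^n}U^{p-1}\phi^2\,dx$, confirming the spectral gap. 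The main obstacle is precisely this spectral-gap claim: in contrast to the local case $s=1$, after radial reduction on $S^n$ the eigenvalue problem does not collapse to a tractable ODE, so one must rely on the full harmonic-analytic diagonalization of \cite{PinoSpectral} as a black box.
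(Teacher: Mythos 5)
Your overall strategy — discreteness via a compact embedding (or equivalently the stereographic transfer to $S^n$), identification of $\alpha_1=1$ from positivity of $U$ (Krein--Rutman is a slightly more explicit version of the paper's argument), the second-variation Poincar\'e-type inequality to locate $p$ in the spectrum, and the nondegeneracy result of D\'avila--del Pino--Sire \cite{PinoSpectral} to pin down $H_2$ — is essentially the route the paper takes. The stereographic version of the compactness argument and the Krein--Rutman phrasing are minor cosmetic variants, and your expansion of $\delta(U+\epsilon\phi)$ giving $\normhs{\phi}^2\geq p\int U^{p-1}\phi^2$ for $\phi\perp U$ is the same estimate, with the equality cases $\partial_\lambda U$, $\partial_{z_a}U$ completing $\alpha_2=p$.

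There is, however, a genuine gap in the final step. Theorem~1.1 of \cite{PinoSpectral} classifies \emph{bounded} solutions of $(-\Delta)^s\phi=pU^{p-1}\phi$, whereas in the spectral problem your candidate eigenfunction is a priori only in $\dot{H}^s(\R^n)$. You treat the theorem as a black box and never check that $\dot{H}^s$-eigenfunctions are in $L^\infty$, so the theorem cannot be applied directly. The paper addresses this explicitly with a bootstrap argument: starting from $\phi\in\dot{H}^s$ and iterating $(-\Delta)^s\phi=pU^{p-1}\phi$, one shows $\phi\in\dot{H}^{(2k+1)s}$ for every $k$, and then the fractional Sobolev embedding gives $\phi\in L^\infty(\R^n)$, at which point \cite{PinoSpectral} is applicable. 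Without that regularity step (or a direct spherical-harmonic diagonalization carried out in detail, rather than merely alluded to), the characterization of $H_2$ is not established. A secondary imprecision: what Theorem~1.1 of \cite{PinoSpectral} actually gives is the kernel of the linearized operator, i.e.\ that the $p$-eigenspace is exactly $\operatorname{span}(\partial_{z_a}U,\partial_\lambda U)$; it does not by itself assign $p$ the role of \emph{second} eigenvalue — that requires your Poincar\'e inequality together with simplicity of $\alpha_1$, so you should present these ingredients as working in tandem rather than as redundant ``cross-checks.''
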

\begin{proof}
Since the embedding $\dot{H}^s \hookrightarrow L^{2^*}_{U^{p-1}}$ is compact, the spectrum of the operator $\mathcal{L}$ is discrete. This can be proved by following the same strategy as in the proof of Proposition A.1 in \cite{FigGla20} along with a fractional Rellich-Kondrakov Theorem which is stated as Theorem 7.1 in \cite{rellich}. Furthermore, as the following identities hold
\begin{align*}
    (-\Delta)^s U=U^{p}, \quad(-\Delta)^s\left(\partial_{\lambda} U\right)=p U^{p-1} \partial_{\lambda} U, \quad(-\Delta)^s\left(\nabla_{z_j} U\right)=p U^{p-1} \nabla_{z_j} U,
\end{align*}
for $1\leq j\leq n$, it is clear that $1$ and $p$ are eigenvalues with eigenfunctions $U$ and the partial derivatives $\partial_{z_j}U$ and $\partial_{\lambda} U$ respectively. For the first part, since the function $U>0$, we deduce that the first eigenvalue is $\alpha_1=1$ which is simple and therefore $H_1=\operatorname{span}(U).$ For the second part, recall the min-max characterization of the second eigenvalue
\begin{align*}
\alpha_{2}=\inf \left\{\frac{\innerhs{w}{w}}{\inner{w}{w}_{L^2_{U^{p-1}}}}: \inner{w}{U}_{L^2_{U^{p-1}}} = 0\right\}.
\end{align*}
We first show that $\lambda_2\leq p$. For this consider the second variation of the quantity $\delta(u) = \normhs{u}^2 - S^2 \normc{u}^2$ around the bubble $U$. Since $U$ is an extremizer for the HLS inequality we know that
\begin{align*}
\frac{d^{2}}{d \epsilon^{2}}\bigg|_{\epsilon=0} \delta(U+\epsilon \varphi) \geq 0, \quad \forall \phi \in \dot{H}^s(\R^n).
\end{align*}
Furthermore using $\int_{\R^n} U^{2^*} = S^{n/s}$ for any $\phi \in \dot{H}^s$, we get
\begin{align*}
    \frac{d^{2}}{d \epsilon^{2}}\bigg|_{\epsilon=0} \delta(U+\epsilon \varphi) &= 2\innerhs{\phi}{\phi} - 2S^2(2-2^*) (S^{n/s})^{2/2^* - 2}\left(\int_{\R^n} U^{p}\phi\right)^2 \\
    &\quad - 2S^2 p (S^{n/s})^{2/2^*-1}\int_{\R^n} U^{p-1}\phi^2.
\end{align*}
Thus, when $\inner{\phi}{U}_{L^2_{U^{p-1}}}=\int_{\R^n} \phi U^p = 0$ we can drop the second term to get
\begin{align}\label{Poincare type inequality}
    p \int_{\R^n} U^{p-1}\phi^2 = p \inner{\phi}{\phi}_{L^2_{U^{p-1}}} \leq \innerhs{\phi}{\phi}.
\end{align} 
This implies that $\lambda_2\leq p$ and the equality is attained when $\phi = \partial_{\lambda} U$ or $\phi = \partial_{z_j} U$ for $1\leq j\leq n.$ Thus the second eigenvalue $\lambda_2 = p.$ Finally, we need to argue that $H_2 =\operatorname{span}(\partial_{z_1}U,\partial_{z_2}U,\cdots, \partial_{z_n}U,\partial_{\lambda}U)$. For this we make use of Theorem 1.1 in \cite{PinoSpectral} which states the following
\begin{theorem}\label{PinoTheorem}
Let $n>2s$ and $s\in (0,1)$. Then the solution
$$
U(x)=\alpha_{n, s}\left(\frac{1}{1+|x|^{2}}\right)^{\frac{n-2s}{2}}
$$
of the equation, $(-\Delta)^s U = U^p$ is nondegenerate in the sense that all bounded solutions of equation $(-\Delta)^s \phi = pU^{p-1}\phi$ are linear combinations of the functions
\begin{align*}
    \partial_{\lambda} U, \partial_{z_1} U, \partial_{z_2} U,\cdots, \partial_{z_n} U.
\end{align*}
\end{theorem}
Thus, if we argue that the solutions to the linearized equation $(-\Delta)^s \phi= p U^{p-1}\phi$ are bounded, then we can apply the above theorem to deduce that $H_2 =\operatorname{span}(\partial_{z_1}U,\linebreak\partial_{z_2}U,\cdots, \partial_{z_n}U,\partial_{\lambda}U)$. For this, we use a bootstrap argument. Let $\phi \in \dot{H}^s$ satisfy the equation $(-\Delta)^s \phi = pU^{p-1}\phi$. Then since $pU^{p-1}\phi \in \dot{H}^s$, we get that $(-\Delta)^s \phi \in \dot{H}^s$, which implies that $\phi\in \dot{H}^{3s}.$ Indeed using the definition of $\dot{H}^{3s}$ norm, we have
\begin{align*}
    \norm{u}_{\dot{H}^{3s}}^2 &= \norml{(-\Delta)^{3s/2}\phi}^2 = \norml{(-\Delta)^{s/2}[(-\Delta)^s\phi]}^2 =  \norm{(-\Delta)^s u}_{\dot{H}^{s}}^2 < +\infty.
\end{align*}
However, now since $\phi \in \dot{H}^{3s}$ we can repeat the same argument to get $\phi \in \dot{H}^{5s}.$ Proceeding in this manner we deduce that $\phi \in \dot{H}^{(2k+1)s}$ for any $k\in \N.$ Thus for large enough $k\in \N$, we have $2(2k+1)s > n$ and therefore by Sobolev embedding for fractional spaces (see for instance Theorem 4.47 in \cite{Francoise}) we get that $\phi \in L^{\infty}(\R^n)$. This allows us to use Theorem \ref{PinoTheorem} and thus we deduce that $H_2 =\operatorname{span}(\partial_{z_1}U,\partial_{z_2}U,\cdots, \partial_{z_n}U,\partial_{\lambda}U)$.
\end{proof}
As done in \cite{FigGla20}, by localizing the linear combination of bubbles using bump functions and the spectral properties derived in Lemma \ref{fractional-spectral-lemma}, we can show the following inequality,
\begin{lemma}
Let $n>2s$ and $\nu\geq 1.$ Then there exists a $\delta > 0$ such that if $(\alpha_i, U_i)_{i=1}^{\nu}$ is a family of $\delta-$interacting family of bubbles and $\rho\in \dot{H}^s(\R^n)$ is a function satisfying the orthogonality conditions \eqref{fractional-bubble-ortho-func}, \eqref{fractional-bubble-ortho-partial-scalar} and \eqref{fractional-bubble-ortho-partial-z} where we denote $U_i = U[z_i,\lambda_i]$ then there exists a constant $\tilde{c}<1$ such that following holds
\begin{align*}
    \int_{\R^n} |\sigma|^{p-1}\rho^2 \leq \frac{\tilde{c}}{p}\int_{\R^n} |(-\Delta)^{s/2} \rho|^2 
\end{align*}
where $\sigma = \sum_{i=1}^{\nu} \alpha_i U_i.$
\end{lemma}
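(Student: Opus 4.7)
The plan is to localize the quadratic form around each bubble using the bump functions $\Phi_i$ from Lemma \ref{fractional-bump-lemma}, reduce to the single-bubble spectral estimate of Lemma \ref{fractional-spectral-lemma}, and then relate $\norml{(-\Delta)^{s/2}(\rho\Phi_i)}$ back to $\normhs{\rho}$ using a fractional commutator bound. The main technical obstacle, absent in the local setting of \cite{FigGla20}, will be the last step: the fractional Laplacian does not obey a Leibniz rule, so $|(-\Delta)^{s/2}(\rho\Phi_i)|^2$ cannot be expanded directly.

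First, on $\{\Phi_i>0\}$, property \emph{(ii)} of the bump lemma gives $U_j \leq \hat\epsilon U_i$ for $j\neq i$, and the $\delta$-interaction of the family $(\alpha_i,U_i)_{i=1}^\nu$ forces $|\alpha_i-1|=o(1)$, so $|\sigma|^{p-1} \leq (1+o(1)) U_i^{p-1}$ on this set. The sets $\{\Phi_i>0\}$ are essentially disjoint since overlap would force $U_i\leq\hat\epsilon^2 U_i$ via property \emph{(ii)} applied twice, and by property \emph{(i)} the complement contains only $o(1)$ of each $\int U_j^{p+1}$. A H\"older argument on the exterior region then yields
\begin{align*}
\int_{\R^n} |\sigma|^{p-1}\rho^2 \leq \sum_{i=1}^{\nu}(1+o(1))\int_{\R^n} U_i^{p-1}(\rho\Phi_i)^2 + o(1)\normhs{\rho}^2.
\end{align*}

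Next, I would apply Lemma \ref{fractional-spectral-lemma} to each $\rho\Phi_i$. Although $\rho\Phi_i$ is not exactly orthogonal (in $L^2_{U_i^{p-1}}$) to $H_1\oplus H_2=\operatorname{span}(U_i,\partial_\lambda U_i,\partial_{z_j}U_i)$, it is approximately so: for instance, $\inner{\rho\Phi_i}{U_i}_{L^2_{U_i^{p-1}}}=\int\rho(\Phi_i-1)U_i^p$ by \eqref{fractional-rho-Up-ortho-cond}, and H\"older combined with property \emph{(i)} bounds this by $o(1)\normhs{\rho}$, with analogous estimates for the derivative directions. Since the third eigenvalue $\Lambda$ of $(-\Delta)^s/U_i^{p-1}$ strictly exceeds $p$, decomposing $\rho\Phi_i$ into eigenspaces and absorbing the small projection yields
\begin{align*}
\int_{\R^n} U_i^{p-1}(\rho\Phi_i)^2 \leq \frac{1}{\Lambda}\int_{\R^n}|(-\Delta)^{s/2}(\rho\Phi_i)|^2 + o(1)\normhs{\rho}^2.
\end{align*}

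Finally, to handle the fractional Leibniz defect I would use the decomposition
\begin{align*}
(-\Delta)^{s/2}(\rho\Phi_i) = \Phi_i(-\Delta)^{s/2}\rho + \rho(-\Delta)^{s/2}\Phi_i + \mathcal{C}(\rho,\Phi_i),
\end{align*}
bounding the middle term by H\"older against property \emph{(iii)}, $\norml{\rho(-\Delta)^{s/2}\Phi_i}\leq\normc{\rho}\norm{(-\Delta)^{s/2}\Phi_i}_{L^{n/s}}\lesssim\hat\epsilon\normhs{\rho}$, and the commutator by the Kenig estimate \eqref{fractional-final-error-estimate}. Expanding the square, summing over $i$, and using $\sum_i\Phi_i^2\leq 1$ from essential disjointness of supports yields $\sum_i\int|(-\Delta)^{s/2}(\rho\Phi_i)|^2 \leq (1+C\hat\epsilon)\normhs{\rho}^2$. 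Combining the three displays gives $\int|\sigma|^{p-1}\rho^2 \leq (\Lambda^{-1}+o(1))\normhs{\rho}^2$; since $p/\Lambda<1$, choosing $\hat\epsilon$ (hence $\delta$) sufficiently small produces the constant $\tilde c<1$ required.
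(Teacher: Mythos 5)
Your proposal follows essentially the same argument as the paper: localize the quadratic form by the bump functions $\Phi_i$ from Lemma \ref{fractional-bump-lemma}, verify that $\rho\Phi_i$ is approximately orthogonal to $\operatorname{span}(U_i,\partial_\lambda U_i,\partial_{z_j}U_i)$ in $L^2_{U_i^{p-1}}$ to invoke the spectral gap $\Lambda>p$ of Lemma \ref{fractional-spectral-lemma}, and control the fractional Leibniz defect via the Kenig--Ponce--Vega commutator bound \eqref{frac-error-estimate}. The minor presentational differences (e.g.\ you justify disjointness of $\{\Phi_i>0\}$ by applying property \emph{(ii)} twice, and you sum squares directly rather than summing $L^2$ norms and then squaring) do not change the argument in any substantive way.
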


\begin{proof}
Let $\epsilon > 0$ be such that there exists a $\delta>0$ and a family of bump functions $(\Phi_i)_{i=1}^{\nu}$ as in Lemma \ref{fractional-bump-lemma}. Then using \eqref{fractional-bump-lemma-epsilon U_i}, we get 
\begin{align*}
\int_{\R^n} |\sigma|^{p-1}\rho^2 &= \int_{\{\sum_{i} \Phi_i \geq 1\}} |\sigma|^{p-1}\rho^2 + \int_{\{\sum_{i} \Phi_i < 1\}} |\sigma|^{p-1}\rho^2 \\
&\leq  \sum_{i=1}^{\nu} \int_{\{\Phi_i > 0\}} |\sigma|^{p-1}\rho^2 + \int_{\{\sum_{i} \Phi_i < 1\}} |\sigma|^{p-1}\rho^2 \\
&\leq (1+o(1))\sum_{i=1}^{\nu} \int_{\R^n} \Phi_{i}^2 U_i^{p-1}\rho^2 + \int_{\{\sum_{i} \Phi_i < 1\}} |\sigma|^{p-1}\rho^2,
\end{align*}
where $o(1)$ denotes a quantity that tends to $0$ as $\delta\to 0.$ We can estimate the second term using Hölder and Sobolev inequality 
\begin{align}\label{fractional-second-term}
\int_{\left\{\sum \Phi_{i}<1\right\}} |\sigma|^{p-1} \rho^{2} &\leq\left(\int_{\left\{\sum \Phi_{i}<1\right\}} |\sigma|^{2^{*}}\right)^{\frac{p-1}{2^{*}}}\|\rho\|_{L^{2^{*}}}^{2} \nonumber\\
&\leq C \left(\sum_{i=1}^{\nu}\int_{\left\{\Phi_{i}<1\right\}}  U_i^{2^{*}}\right)^{\frac{p-1}{2^{*}}}\normhs{\rho}^{2} \nonumber\\
&\leq o(1) \normhs{\rho}^{2}.
\end{align}
To estimate the first term, we first show that
\begin{align}\label{fractional-baby-inequality}
\int_{\mathbb{R}^{n}}\left(\rho \Phi_{i}\right)^{2} U_{i}^{p-1} \leq \frac{1}{\Lambda} \int_{\mathbb{R}^{n}}\left|(-\Delta)^{s/2}\left(\rho \Phi_{i}\right)\right|^{2}+o(1)\normhs{\rho}^{2}
\end{align}
where $\Lambda > p$ is the third eigenvalue of the operator $\frac{(-\Delta)^s}{U_i^p}.$ To prove this estimate we show that $\rho \Phi_i$ almost satisfies the orthogonality conditions \eqref{fractional-bubble-ortho-func}, \eqref{fractional-bubble-ortho-partial-scalar} and \eqref{fractional-bubble-ortho-partial-z}. Let $f:\R^n\to \R$ be a function equal to $U_i, \partial_{\lambda} U_i$ or $\partial_{z_j} U_i$ up to scaling and satisfying the identity $\int_{\R^n} f U_{i}^{p-1} =1.$ Then using  \eqref{fractional-bubble-ortho-func}, \eqref{fractional-bubble-ortho-partial-scalar} and \eqref{fractional-bubble-ortho-partial-z}, we get
\begin{align*}
    \left|\inner{(\rho \Phi_i)}{f}_{L^{2}_{U^{p-1}}}\right|  &= \left|\int_{\R^n} \rho \Phi_i f U_i^{p-1}\right| = \left|\int_{\R^n} \rho  f U_i^{p-1} (1-\Phi_i)\right| \\ &\leq\|\rho\|_{L^{2^*}}\left(\int_{\mathbb{R}^{n}} f^{2} U_{i}^{p-1}\right)^{1/2}\left(\int_{\left\{\Phi_{i}<1\right\}} U_{i}^{2^{*}}\right)^{s/n} \leq o(1)\normhs{\rho}.   
\end{align*}
Using Lemma \ref{fractional-spectral-lemma}, we can now conclude \eqref{fractional-baby-inequality}. We can further estimate the first term in \eqref{fractional-baby-inequality} by using Theorem A.8 in \cite{KeningProductRule}, which states that the remainder term $\mathcal{C}(\rho, \Phi_i)=(-\Delta)^{s/2}(\rho \Phi_i)-\rho(-\Delta)^{s/2}\Phi_i -\Phi_i(-\Delta)^{s/2}\rho$ satisfies the following estimate
\begin{align}\label{frac-error-estimate}
\norml{\mathcal{C}(\rho, \Phi_i}\leq C\norm{(-\Delta)^{s_{1} / 2} \Phi_i}_{L^{p_{1}}}
\norm{(-\Delta)^{s_{2} / 2} \rho}_{L^{p_{2}}}
\end{align}
provided that $s_{1}, s_{2} \in [0, s], s=s_{1}+s_{2}$ and $p_{1}, p_{2} \in(1,+\infty)$ satisfy
$$
\frac{1}{2}=\frac{1}{p_{1}}+\frac{1}{p_{2}}.
$$
Setting $s_1=s, s_2=0, p_1=n/s$ and $p_2=2^* = \frac{2n}{n-2s}$ using \eqref{frac-error-estimate} and \eqref{fractional-bump-lemma-Ln-norm}, we get
\begin{align}\label{fractional-final-error-estimate}
\norml{\mathcal{C}(\rho, \Phi_i}\leq C\norm{(-\Delta)^{s / 2} \Phi_i}_{L^{n/s}}\norm{\rho}_{L^{2^*}}\leq o(1)\normhs{\rho}.
\end{align}
We can estimate the term $\rho(-\Delta)^{s/2}\Phi_i$ using Hölder and Sobolev inequality along with \eqref{fractional-bump-lemma-Ln-norm} as follows
\begin{align*}
    \int_{\R^n}\rho^2 |(-\Delta)^{s/2} \Phi_i|^2 &\leq \normc{\rho}^2 \norm{(-\Delta)^{s/2} \Phi_i}^2_{L^{n/s}} \leq o(1) \normhs{\rho}^2.
\end{align*}
Thus
\begin{align}\label{fractional-prod-d1}
\sum_{i=1}^{\nu}\norml{(-\Delta)^{s/2}(\rho \Phi_i)} &\leq\sum_{i=1}^{\nu} \norml{\mathcal{C}(\rho, \Phi_i)} + \norml{\Phi_i(-\Delta)^{s/2} \rho} + \norml{\rho (-\Delta)^{s/2}\Phi_i}\nonumber\\
&\leq o(1)\normhs{\rho} + \sum_{i=1}^{\nu}\norml{\Phi_i(-\Delta)^{s/2} \rho}.
\end{align}
Since the bump functions have disjoint support by construction, we get
\begin{align}\label{fractional-prod-d2}
    \sum_{i=1}^{\nu} \int_{\R^n}|\Phi_i (-\Delta)^{s/2}\rho|^2 \leq \normhs{\rho}^2.
\end{align}
Therefore using \eqref{fractional-second-term}, \eqref{fractional-baby-inequality} \eqref{fractional-prod-d1} and \eqref{fractional-prod-d2}, we get
\begin{align*}
\int_{\mathbb{R}^{n}} |\sigma|^{p-1} \rho^{2} &\leq (1+o(1))\sum_{i=1}^{\nu} \int_{\R^n} \Phi_{i}^2 U_i^{p-1}\rho^2 + \int_{\{\sum_{i} \Phi_i < 1\}} |\sigma|^{p-1}\rho^2 \\ 
& \leq
(1+o(1))\left(\frac{1}{\Lambda} \sum_{i=1}^{\nu}\int_{\mathbb{R}^{n}}\left|(-\Delta)^{s/2}\left(\rho \Phi_{i}\right)\right|^{2} + o(1)\normhs{\rho}^{2}\right) + 
 o(1) \normhs{\rho}^{2} \\
 &\leq \left(\frac{1}{\Lambda}+o(1)\right) \normhs{\rho}^{2},
\end{align*}
which implies the desired estimate.
\end{proof}
\subsection{Interaction Integral Estimate}
In this section, we will prove \eqref{fractional-bubble-interaction-estimate} and \eqref{fractional-bubble-coefficient-estimate}. 
\begin{lemma}
Let $2s < n < 6s$ and $\nu \geq 1.$ For any $\epsilon>0$ there exits $\delta>0$ such that the following holds. Let $(\alpha_i, U_i)_{i=1}^{\nu}$ be a $\delta-$interacting family, $u = \sum_{i=1}^{\nu} \alpha_i U_i + \rho $ and $\rho$ satisfies \eqref{fractional-bubble-ortho-func}, \eqref{fractional-bubble-ortho-partial-scalar}, \eqref{fractional-bubble-ortho-partial-z} with $\normhs{\rho} \leq 1.$ Then for all $i=1,2,\cdots, \nu$ we have
\begin{align}\label{multi-bubble-alpha}
\left|\alpha_{i}-1\right| \lesssim \epsilon\normhs{\rho}+\left\|(-\Delta)^{s} u-u|u|^{p-1}\right\|_{H^{-s}}+\normhs{\rho}^{2}
\end{align}
and for each $j\neq i$
\begin{align}\label{multi-bubble-interaction}
\int_{\R^n} U_i^p U_j \lesssim \epsilon\normhs{\rho}+\normd{(-\Delta)^s u - u|u|^{p-1}}+\normhs{\rho}^{2},
\end{align}
\end{lemma}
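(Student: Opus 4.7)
My plan is to adapt the proof of Proposition 3.11 in \cite{FigGla20} to the fractional setting: test the identity $(-\Delta)^s u - u|u|^{p-1} = f$ (the deficit) against each bubble $U_i$ in $\dot{H}^s$--$H^{-s}$ duality, and read off a near-linear algebraic system for the quantities $(\alpha_i-1)$ and the pairwise interactions $T_{ij}:=\int_{\R^n}U_i^p U_j$. Using $(-\Delta)^sU_i=U_i^p$ together with the orthogonality $\innerhs{\rho}{U_i}=0$, the duality pairing gives
\[
\alpha_i S^{n/s} + \sum_{j\neq i}\alpha_j T_{ij} = \inner{f}{U_i} + \int_{\R^n}u|u|^{p-1}U_i,
\]
with $|\inner{f}{U_i}|\lesssim \normd{f}$, so everything reduces to expanding the nonlinear integral around $\sigma$.

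In the regime $2s<n<6s$ we have $p>2$, so the pointwise Taylor expansion
$(\sigma+\rho)|\sigma+\rho|^{p-1} = \sigma^p + p\sigma^{p-1}\rho + O(\sigma^{p-2}\rho^2+|\rho|^p)$
is valid, and the standard expansion of $\sigma^p$ leaves a remainder bounded by $\sum_{k\neq l}U_k^{p-1}U_l$. Substituting, I isolate the main algebraic contribution $\alpha_i^p S^{n/s} + \sum_{j\neq i}\alpha_j^p T_{ij}$ plus four remainders: the diagonal $\rho$-linear term $p\alpha_i^{p-1}\int U_i^p\rho$, which vanishes by \eqref{fractional-rho-Up-ortho-cond}; the higher-order terms $\int\sigma^{p-2}\rho^2 U_i + \int|\rho|^p U_i \lesssim \normhs{\rho}^2$ controlled by Hölder and HLS (using $\normhs{\rho}\leq 1$); the triple-bubble integrals $\int U_k^{p-1}U_l U_i$ with $\{i,k,l\}$ distinct, which the interaction estimates of Appendix B in \cite{FigGla20} bound by $o(\max_{m\neq n}T_{mn})$; and the off-diagonal $\rho$-linear terms $p\sum_{j\neq i}\alpha_j^{p-1}\int U_j^{p-1}U_i\rho$. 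After collecting everything, the identity assumes the schematic form
\[
(\alpha_i-\alpha_i^p)S^{n/s} + \sum_{j\neq i}(\alpha_j-\alpha_j^p)T_{ij} = O\bigl(\epsilon\normhs{\rho}+\normd{f}+\normhs{\rho}^2\bigr),
\]
and since $\alpha_j\approx 1$ gives $\alpha_j-\alpha_j^p=(1-p)(\alpha_j-1)+O((\alpha_j-1)^2)$, this is a near-diagonal linear system in $(\alpha_i-1)$ whose coefficient matrix is $(1-p)S^{n/s}\,\mathrm{Id}$ perturbed by entries of size $T_{ij}$, hence invertible for $\delta$ small.

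The delicate step is the off-diagonal $\rho$-linear remainder. Hölder together with the integral estimate from Appendix B of \cite{FigGla20} yields
\[
\Bigl|\int U_j^{p-1}U_i\,\rho\Bigr| \leq \norm{U_j^{p-1}U_i}_{L^{(2^*)'}}\normc{\rho} \lesssim T_{ij}\normhs{\rho},
\]
and since the family is $\delta$-interacting, $T_{ij}$ can be made smaller than any prescribed $\epsilon>0$ by shrinking $\delta$, producing exactly the $\epsilon\normhs{\rho}$ slack in the statement. Inverting the linear system from the previous paragraph then yields \eqref{multi-bubble-alpha}; substituting \eqref{multi-bubble-alpha} back into the algebraic identity, isolating a given $T_{ij}$ on the left, and pushing the triple-bubble terms $\int U_k^{p-1}U_l U_i$ (which are strictly smaller order in $Q$) to the right then gives \eqref{multi-bubble-interaction}.

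I expect the main obstacle to be the coupled bookkeeping between $(\alpha_i-1)$ and $T_{ij}$: the smallness of the bubble interactions is doing double duty, both as the matrix perturbation used to invert the system and as the factor producing the $\epsilon$ in $\epsilon\normhs{\rho}$. This is resolved exactly as in \cite{FigGla20}, by treating both families of quantities as unknowns of comparable size and bootstrapping a weak bound $|\alpha_i-1|+T_{ij}\lesssim \max_{k\neq l}T_{kl}+\normd{f}+\normhs{\rho}^2$ into the sharp one by absorption. The only fractional-specific point to verify is that Proposition B.2 of \cite{FigGla20}, which underpins every bubble interaction estimate used above, transports verbatim to the nonlocal setting; this is automatic since it is a purely pointwise/integral statement about the bubble profile \eqref{eqn:fractional-bubble}, which has the same decay rate as its local counterpart up to replacing $(n-2)/2$ by $(n-2s)/2$.
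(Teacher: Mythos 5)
Testing the deficit against $U_i$ alone does not yield a solvable system, because the expansion of $\int_{\R^n} U_i\,\sigma^p$ on which your plan rests omits a contribution of leading order in the interaction. You isolate $\alpha_i^pS^{n/s}+\sum_{j\neq i}\alpha_j^pT_{ij}$ and push the remainder of $\sigma^p-\sum_j\alpha_j^pU_j^p$ into triple-bubble integrals with $\{i,k,l\}$ distinct, but this remainder also contains, near the $i$-th bubble, the cross term $p(\alpha_iU_i)^{p-1}\alpha_jU_j$, and integrating it against $U_i$ produces $p\alpha_i^{p-1}\alpha_j\int_{\R^n}U_i^pU_j=p\alpha_i^{p-1}\alpha_jT_{ij}$ for each $j\neq i$. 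This is of the same size $Q$ as the leading interaction and enters with a nonvanishing constant $\approx p$. Restoring it, the $i$-th row of your system reads
\begin{align*}
(\alpha_i-\alpha_i^p)S^{n/s}+\sum_{j\neq i}\bigl(\alpha_j-\alpha_j^p-p\alpha_i^{p-1}\alpha_j\bigr)T_{ij}=O\bigl(\epsilon\normhs{\rho}+\normd{f}+\normhs{\rho}^2\bigr),
\end{align*}
where $f=(-\Delta)^su-u|u|^{p-1}$, and as $\delta\to0$ the bracketed coefficient tends to $-p\neq0$. You therefore have $\nu$ relations among the $\nu+\binom{\nu}{2}$ unknowns $(\alpha_i-1)$ and $T_{ij}$, all of which must be bounded by the right-hand side, and for $\nu\geq2$ the system is genuinely underdetermined: the one-parameter family $\alpha_i-1=-\frac{p}{(p-1)S^{n/s}}\sum_{j\neq i}T_{ij}$ with the $T_{ij}$ left arbitrary annihilates the left-hand side to leading order, so no bound on the individual $T_{ij}$ (and hence none on $\alpha_i-1$) can be extracted. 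The ``bootstrapping by absorption'' you gesture at cannot repair this, since the quantity one would absorb appears on both sides with coefficients of comparable size.

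The paper closes the count by a quite different mechanism. It localizes with the bump functions $\Phi_i$, pairs the decomposition against \emph{two} test functions $f\Phi_i$ with $f\in\{U_i,\partial_\lambda U_i\}$, and orders the bubbles by concentration so that the more concentrated ones are handled by induction. Localization collapses the remaining interaction to a single scalar $\theta\propto V_2(0)$, so each $i$ contributes two equations in the two unknowns $(\alpha_i-\alpha_i^p,\theta)$; the non-degeneracy condition \eqref{ratio-neq}, which hinges on $\int U_i^p\partial_\lambda U_i=0$ while $\int U_i^{p-1}\partial_\lambda U_i\neq0$, guarantees that the two tests are independent and both quantities can be read off. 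Without the second test function, the induction on the ordering, and the bump-function localization, the argument does not close.
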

\begin{proof}
We begin by assuming that the bubbles are ordered by $\lambda_i$ in descending order. Thus $U_1$ is the most concentrated bubble and so on. The proof of this Lemma then proceeds by induction on the index $i.$ Assume that the claim holds for all indices $j<i$ where $1\leq i\leq \nu$ and let $U_i$ be the corresponding bubble and let $V= \sum_{j=1,j\neq i }^{\nu} \alpha_j U_j.$ 

For $\epsilon>0$ (in particular $\epsilon=o(1)$) let $\Phi_i$ be the bump function associated to $U_i$ as in Lemma \ref{fractional-bump-lemma}. Then consider the following decomposition
\begin{align*}
\left(\alpha_i-\alpha_i^{p}\right) U_i^{p}-p (\alpha_i U_i)^{p-1} V=& -(-\Delta)^s \rho+((-\Delta)^s u-u|u|^{p-1})-\sum_{i=1}^{\nu} \alpha_{i} U_{i}^{p} \\ 
&+p(\alpha_i U_i)^{p-1}\rho+\left[(\sigma+\rho)|\sigma+\rho|^{p-1}-\sigma^{p}-p \sigma^{p-1} \rho\right] \\ &+\left[p \sigma^{p-1} \rho-p(\alpha_i U_i)^{p-1} \rho\right] \\ &+\left[(\alpha_i U_i+V)^{p}-(\alpha_i U_i)^{p}-p(\alpha_i U_i)^{p-1} V\right].
\end{align*}
The term $(\alpha_i-\alpha_i^p)U_i^p$ allows us to estimate $|\alpha_i-1|$ while the term $U_i^{p-1}V$ will help us to establish the integral estimate. Furthermore, we want to establish a control that is linear in $\norm{(-\Delta)^s u-u|u|^{p-1}}_{H^{-s}}$, therefore, we introduce the laplacian term on the right-hand side. Notice that on the region $\{\Phi_i > 0\}$ we can use \eqref{fractional-bump-lemma-epsilon U_i} to get
\begin{align*}
    \sum_{j\neq i} \alpha_j U_j^p &\leq o(1)U_i^{p-1} V, \\
    |p\sigma^{p-1}\rho - p(\alpha_i U_i)^{p-1} \rho|&\leq o(1) p|\rho|  U_i^{p-1}, \\
    |\alpha_i U_i+V)^{p}-(\alpha_i U_i)^{p}-p(\alpha_i U_i)^{p-1} V| &\leq  o(1) U_i^{p-1} V,\\
    |(\sigma + \rho)|\sigma + \rho|^{p-1} -\sigma^{p} - p\sigma^{p-1}\rho| &\lesssim |\rho|^{p} + U^{p-2}\rho^2
\end{align*}
Thus combining the above estimates we get
\begin{align*}
&|(\alpha_i-\alpha_i^{p}) U_i^{p}-(p \alpha_i^{p-1}+o(1)) U_i^{p-1} V +  (-\Delta)^s \rho-((-\Delta)^s u-u|u|^{p-1})-p(\alpha_i U_i)^{p-1} \rho| \\ 
&\lesssim|\rho|^{p}+ U_i^{p-2}|\rho|^{2}+o(1)(U_i^{p-1}|\rho|).   
\end{align*}
Testing the above estimate with $f\Phi_i$ where $f = U_i$ or $f=\partial_{\lambda} U_i$ and using orthogonality conditions \eqref{fractional-bubble-ortho-func}, \eqref{fractional-bubble-ortho-partial-scalar} and \eqref{fractional-bubble-ortho-partial-z} we get
\begin{align*}
\left|\int_{\R^n}\left[\left(\alpha_i-\alpha_i^{p}\right) U_i^{p}-\left(p \alpha_i^{p-1}+o(1)\right) U_i^{p-1} V\right] f \Phi_i\right|  \lesssim\left|\int_{\R^n} (-\Delta)^{s/2} \rho  (-\Delta)^{s/2}(f \Phi_i)\right| \\ 
+\left|\int_{\R^n}\left((-\Delta)^{s} u-u|u|^{p-1}\right) f \Phi_i\right| +\left|\int_{\R^n} U_i^{p-1} f \rho \Phi_i\right| +\int_{\R^n}|\rho|^{p}|f| \Phi_i\\+o(1)\int_{\R^n} U_i^{p-2}|f||\rho|^{2} \Phi_i+ o(1)\int_{\R^n} U_i^{p-1}|f||\rho| \Phi_i.
\end{align*}
To estimate the first two terms we make use of the Kato-Ponce inequality \eqref{kato}. Thus for the first term, we have 
\begin{align*}
    \left|\int_{\R^n} (-\Delta)^{s/2} \rho  (-\Delta)^{s/2}(f \Phi_i)\right| &\leq  \int_{\R^n} |\mathcal{C}(f, \Phi_i) (-\Delta)^{s/2} \rho  |  + \int_{\R^n} |(-\Delta)^{s/2} \rho  f  (-\Delta)^{s/2}\Phi_i| \\
    &+ \left|\int_{\R^n} |(-\Delta)^{s/2} \rho  (\Phi_i-1) (-\Delta)^{s/2}f \right| 
\end{align*}
where we used made use of the orthogonality conditions \eqref{fractional-bubble-ortho-func} and \eqref{fractional-bubble-ortho-partial-scalar} for the last term and $\mathcal{C}(f,\Phi_i) = (-\Delta)^{s/2}(f\Phi_i)- f  (-\Delta)^{s/2}\Phi_i -\Phi_i (-\Delta)^{s/2}f.$ Then using \eqref{kato}, Hölder and Sobolev inequalities we get
\begin{align*}
    \int_{\R^n} |\mathcal{C}(f, \Phi_i) (-\Delta)^{s/2} \rho  | &\leq \norml{\mathcal{C}(f,\Phi_i)} \normhs{\rho} \leq \normc{f} \norm{(-\Delta)^{s/2} \Phi_i}_{L^{n/s}} \normhs{\rho}\\
    &\lesssim o(1)\normhs{\rho}\\
    \int_{\R^n} |(-\Delta)^{s/2} \rho  f  (-\Delta)^{s/2}\Phi_i| &\leq \normhs{\rho} \normc{f}\norm{(-\Delta)^{s/2}\Phi_i}_{L^{n/s}} \leq o(1)\normhs{\rho} \\
    \left|\int_{\R^n} |(-\Delta)^{s/2} \rho  (\Phi_i-1) (-\Delta)^{s/2}f \right|  &\leq \normhs{\rho} \left(\int_{\{\Phi_i<1\}} |(-\Delta)^{s/2} f|^2 \right)^{1/2} \leq o(1) \normhs{\rho}.
\end{align*}
Combining the above estimates yields
\begin{align*}
     \left|\int_{\R^n} (-\Delta)^{s/2} \rho  (-\Delta)^{s/2}(f \Phi_i)\right| \lesssim o(1)\normhs{\rho}.
\end{align*}
For the second term
\begin{align*}
\left|\int_{\R^n}\left((-\Delta)^s u-u|u|^{p-1}\right) f \Phi_i\right|&\leq \normd{(-\Delta)^su-u|u|^{p-1}} \norml{(-\Delta)^{s/2}(f\Phi_i)} \\
&\lesssim  \normd{(-\Delta)^su-u|u|^{p-1}}
\end{align*}
as $\norml{(-\Delta)^{s/2}(f\Phi_i)} \lesssim 1$ by \eqref{kato}. The other terms can be estimated in the same way as in \cite{FigGla20}. Thus
\begin{align*}
\left|\int_{\R^n} U_i^{p-1} f \rho \Phi_i\right| &= \left|\int_{\R^n} U_i^{p-1} f \rho (\Phi_i-1)\right|\\
&\lesssim \normhs{\rho} \left(\int_{\{\Phi_i<1\}}\left(U_i^{p-1}|f|\right)^{\frac{2^{*}}{p}}\right)^{\frac{p}{2^{*}}} \\
&\lesssim o(1) \normhs{\rho}, \\
\int_{\R^n}|\rho|^{p}|f| \Phi_i &\leq \normhs{\rho}^p\norml{f} \lesssim \normhs{\rho}^p, \\
\int_{\R^n} U_i^{p-2}|f||\rho|^{2} \Phi_i &\leq \normhs{\rho}^2 \norm{U_i^{p-2}f}_{L^{2^*/(p-1)}} \lesssim \normhs{\rho}^2, \\
\int_{\R^n} U_i^{p-1}|f||\rho| \Phi_i &\leq \normhs{\rho} \norm{U_i^{p-1} f}_{L^{2^*/p}} \lesssim \normhs{\rho}.
\end{align*}
Thus combining the above estimates we finally get
\begin{equation}\label{baby-estimate}
\begin{aligned}
\left|\int_{\R^n}\left[\left(\alpha_i-\alpha_i^{p}\right) U_i^{p}-\left(p \alpha_i^{p-1}+o(1)\right) U_i^{p-1} V\right] f \Phi_i\right| \\ \lesssim o(1)\normhs{\rho} + \left\|(-\Delta)^s u-u|u|^{p-1}\right\|_{H^{-s}}+\normhs{\rho}^2.
\end{aligned}
\end{equation}
Now if we split $V = V_1 + V_2$ where $V_1 =\sum_{j<i} \alpha_jU_j$ and $V_2 = \sum_{j>i}\alpha_jU_j$ then we know by our induction hypothesis that our claim holds for all indices $j<i.$ Thus 
\begin{align}\label{induction-estimate}
    \int_{\R^n} U_i^{p-1} V_{1}|f| \Phi_i \lesssim \int U_i^{p} V_{1} \lesssim  o(1)\normhs{\rho} + \left\|(-\Delta)^s u-u|u|^{p-1}\right\|_{H^{-s}}+\normhs{\rho}^2.
\end{align}
Furthermore using \eqref{fractional-bump-lemma-sup/inf} we have
\begin{align*}
    V_2(x)\Phi_i(x) = (1+o(1))\Phi_i(x), \quad \forall x\in \R^n.
\end{align*}
If $\alpha_i = 1$ then we have nothing to prove, otherwise define $\theta = \frac{p\alpha_i^{p-1}V_2(0)}{\alpha_i-\alpha_i^{p}}$ and thus using the previous estimate and \eqref{induction-estimate} we can re-write \eqref{baby-estimate} as
\begin{equation}\label{key-estimate}
\begin{aligned}
\left|\alpha_i-\alpha_i^{p}\right|\left|\int_{\R^n}\left(U_i^{p}-(1+o(1)) \theta U_i^{p-1}\right) f \Phi_i\right| &\lesssim  o(1)\normhs{\rho} + \left\|(-\Delta)^s u-u|u|^{p-1}\right\|_{H^{-s}}\\
&+\normhs{\rho}^2.
\end{aligned}
\end{equation}
Using \eqref{fractional-bump-lemma-bubble-mass-conc} we can expand the integral on the left-side as follows
\begin{align*}
\int_{\R^n}\left(U_i^{p}-(1+o(1)) \theta U_i^{p-1}\right)f \Phi_i &=\int_{\R^n} U_i^{p} f -\theta \int_{\R^n} U_i^{p-1} f+ \int_{\R^n} U_i^{p} f (\Phi_i-1) \\
&\quad + o(1) \theta  \int_{\R^n} U_i^{p-1} f (\Phi_i - 1),
\end{align*}
where the last two terms can be estimated using \eqref{fractional-bump-lemma-bubble-mass-conc} and $|f|\lesssim U_i$
\begin{align*}
\int_{\R^n} U_i^{p} f (\Phi_i-1) \lesssim \int_{\{\Phi_i<1\}} U_i^{p+1} = o(1).
\end{align*}
Thus we have
\begin{align}\label{error-expansion}
\int_{\R^n}\left(U_i^{p}-(1+o(1)) \theta U_i^{p-1}\right) f \Phi_i=\int_{\R^n} U_i^{p} f-\theta \int_{\R^n} U_i^{p-1} f +o(1).
\end{align}
To prove \eqref{multi-bubble-alpha} we need to show that left-side of the \eqref{error-expansion} cannot be too small for $f=U_i$ or $f=\partial_{\lambda} U_i.$ It is enough to check that
\begin{align}\label{ratio-neq}
\frac{\int_{\R^n} U_i^{2^{*}}}{\int_{\R^n} U_i^{p}} \neq \frac{\int_{\R^n} U_i^{p} \partial_{\lambda} U_i}{\int_{\R^n} U_i^{p-1} \partial_{\lambda} U_i}
\end{align}
because otherwise, we would have that
\begin{align*}
\int_{\R^n}\left(U_i^{p}-(1+o(1)) \theta U_i^{p-1}\right) f \Phi_i=o(1),
\end{align*}
which can be made arbitrarily small. To check \eqref{ratio-neq} observe that 
\begin{align*}
    \frac{\int_{\R^n} U_i^{2^{*}}}{\int_{\R^n} U_i^{p}} = \frac{S^{n/s}}{\int_{\R^n} U_i^{p}} > 0
\end{align*}
while,
\begin{align*}
    \frac{\int_{\R^n} U_i^{p} \partial_{\lambda} U_i}{\int_{\R^n} U_i^{p-1} \partial_{\lambda} U_i} = \frac{\frac{1}{p+1}\frac{d}{d\lambda}_{|\lambda=\lambda_i}\int_{\R^n} U[0,\lambda]^{p+1}}{\frac{1}{p}\frac{d}{d\lambda}_{|\lambda=\lambda_i}\int_{\R^n} U[0,\lambda]^{p}} = 0.
\end{align*}
Thus combining \eqref{error-expansion} and \eqref{ratio-neq} we get
\begin{align*}
   1\lesssim \max_{f\in \{U_i, \partial_{\lambda} U_i\}} \left|\int\left(U_i^{p}-(1+o(1)) \theta U_i^{p-1}\right) f \Phi_i\right|  
\end{align*}
for either $f=U_i$ or $f=\partial_{\lambda} U_i$. Choosing $f$ for which the above integral is maximized we get
\begin{align*}
   \left|\alpha_i-\alpha_i^{p}\right|&\lesssim  \left|\alpha_i-\alpha_i^{p}\right| \max_{f\in \{U_i, \partial_{\lambda} U_i\}} \left|\int\left(U_i^{p}-(1+o(1)) \theta U_i^{p-1}\right) f \Phi_i\right| \\
   &\lesssim o(1)\normhs{\rho} + \left\|(-\Delta)^s u-u|u|^{p-1}\right\|_{H^{-s}}+\normhs{\rho}^2.
\end{align*}
This proves \eqref{multi-bubble-alpha}. To prove \eqref{multi-bubble-interaction} we use \eqref{multi-bubble-alpha} and \eqref{baby-estimate} with $f=U_i$ to get
\begin{align*}
    \int_{\R^n} U_i^p V \Phi_i \lesssim o(1)\normhs{\rho} + \left\|(-\Delta)^s u-u|u|^{p-1}\right\|_{H^{-s}}+\normhs{\rho}^2,
\end{align*}
which in particular implies that for all indices $j\neq i$, we have
\begin{align*}
    \int_{B(z_i, \lambda_i^{-1})} U_i^p U_j \lesssim o(1)\normhs{\rho} + \left\|(-\Delta)^s u-u|u|^{p-1}\right\|_{H^{-s}}+\normhs{\rho}^2.
\end{align*}
Using the integral estimate similar to Proposition B.2 in Appendix B of \cite{FigGla20} we deduce that the \eqref{multi-bubble-interaction} also holds for all indices $j>i$ and thus we are done.
\end{proof}
\section{Case when dimension $n\geq 6s $}The goal of this section is to prove Theorem \ref{fractional-theorem-main} in the case when the dimension satisfies $n\geq 6s$. We follow the steps outlined in Section \ref{section:proof-sketch}.
\subsection{Existence of the first approximation $\rho_0$} In this section, our goal is to find a function $\rho_0$ and a set of scalars $\{c^i_a\}$ such that the following system is satisfied
\begin{align}\label{eqn: rho0 PDE}
\left\{\begin{array}{l}(-\Delta)^s \varphi-\left[\left(\sigma+\varphi\right)^{p}-\sigma^{p}\right]=h-\sum_{i=1}^{\nu} \sum_{a=1}^{n+1} c_{a}^{i} U_{i}^{p-1} Z_{i}^{a} \\ \innerhs{\rho}{Z_i^a}=0, \quad i=1,\cdots,\nu;\quad a=1, \cdots, n+1\end{array}\right.
\end{align}
Here $U_i = U[z_i,\lambda_i]$ are a family of $\delta-$ interacting bubbles, $\sigma = \sum_{i=1}^{\nu} U_i$, $Z_i^{a} $ are derivatives as defined in \eqref{deng-z-derivative-notation}, $h = \sigma^{p}-\sum_{j=1}^{\nu} U_{j}^{p}$. From the finite-dimensional reduction argument we first strive to solve the equation with the linear operator
\begin{align}\label{first approx system}
\left\{\begin{array}{l}(-\Delta)^s \varphi-p\sigma^{p-1}\varphi=h-\sum_{i=1}^{\nu} \sum_{a=1}^{n+1} c_{a}^{i} U_{i}^{p-1} Z_{i}^{a} \\ \innerhs{\rho}{Z_i^a}=0, \quad i=1,\cdots,\nu;\quad a=1, \cdots, n+1\end{array}\right.
\end{align}
Since the proof follows from a fixed point argument, we need to define the appropriate normed space. Denote for $i\neq j$ and $i,j\in I$ where $I=\{1,\cdots, \nu\}$
\begin{align}
R_{ij} &= \max\left(\sqrt{\lambda_{i} / \lambda_{j}}, \sqrt{\lambda_{j} / \lambda_{i}}, \sqrt{\lambda_{i} \lambda_{j}}\left|z_{i}-z_{j}\right|\right) = \varepsilon^{-1}_{ij},\label{R_ij definition}\\
R &= \frac{1}{2}\min_{i\neq j} R_{ij} \label{R definition}.
\end{align}
The quantity $R_{ij}$ gives us an idea of how concentrated or how far the two bubbles are with respect to each other. We further identify two regimes of interest in the subsequent definition. 
\begin{definition}
If $R_{ij} = \sqrt{\lambda_{i} \lambda_{j}}\left|z_{i}-z_{j}\right|$ then we call the bubbles $U_i$ and $U_j$ a bubble cluster. Otherwise, we call them a bubble tower. 
\end{definition}
We now define two norms that capture the behavior of the interaction term $h$.
\begin{definition}
Define the norm $\norm{\cdot}_{*}$ as
\begin{align}\label{deng-star-norm-defn}
  \norm{\phi}_{*} = \sup _{x \in \mathbb{R}^{n}}|\phi(x)| W^{-1}(x)
\end{align}
and the norm $\norm{\cdot}_{**}$ as 
\begin{align}\label{deng-double-star-norm-defn}
  \norm{h}_{**} =  \sup _{x \in \mathbb{R}^{n}}|h(x)| V^{-1}(x),
\end{align}
where the functions $V$ and $W$ are defined using $y_i = \lambda_i(x-z_i)$
\begin{align}
V(x) = \sum_{i=1}^{\nu}\left(\frac{\lambda_{i}^{\frac{n+2s}{2}} R^{2s-n}}{\left\langle y_{i}\right\rangle^{4s}} \chi_{\left\{\left|y_{i}\right| \leq R\right\}}+\frac{\lambda_{i}^{\frac{n+2s}{2}} R^{-4s}}{\left\langle y_{i}\right\rangle^{n-2s}} \chi_{\left\{\left|y_{i}\right| \geq R / 2\right\}}\right) \label{deng-V-func-defn}\\
W(x)=\sum_{i=1}^{\nu}\left(\frac{\lambda_{i}^{\frac{n-2s}{2}} R^{2s-n}}{\left\langle y_{i}\right\rangle^{2s}} \chi_{\left\{\left|y_{i}\right| \leq R\right\}}+\frac{\lambda_{i}^{\frac{n-2s}{2}} R^{-4s}}{\left\langle y_{i}\right\rangle^{n-4s}} \chi_{\left\{\left|y_{i}\right| \geq R / 2\right\}}\right)\label{deng-W-func-defn}.
\end{align}
\end{definition}
Using the norm $\normds{\cdot}$ we can obtain control on the interaction term $h$ for small enough $\delta$. This is the content of the next lemma.
\begin{lemma}\label{lemma h bound in double star}
There exists a small constant $\delta_0 =\delta_{0}(n)$ and large constant $C=C(n)$ such that if $\delta<\delta_0$ then
\begin{align*}
    \normds{\sigma^p - \sum_{i=1}^{\nu}U_i^p} \leq C(n).
\end{align*}
\end{lemma}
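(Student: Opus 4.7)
The plan is to prove the pointwise bound
$$\Bigl|\sigma^p(x) - \sum_{i=1}^{\nu} U_i^p(x)\Bigr| \le C(n)\, V(x) \qquad \text{for every } x \in \R^n,$$
from which the claim follows at once by the definition \eqref{deng-double-star-norm-defn} of $\normds{\cdot}$. The argument is purely algebraic and geometric---the fractional Laplacian does not appear---and is a direct adaptation of the proof of Proposition~3.3 in \cite{deng2021sharp}. Since $n\ge 6s$, the exponent $p=(n+2s)/(n-2s)$ lies in $(1,2]$; fixing $x$ and choosing an index $k=k(x)$ realising $\max_i U_i(x)$, and writing $\sigma(x) = U_k(x)(1+t)$ with $t=\sum_{j\ne k}U_j/U_k\in[0,\nu-1]$, the bound $|(1+t)^p - 1 - t^p|\lesssim_{\nu,p} t$ (its derivative $p((1+t)^{p-1}-t^{p-1})$ is bounded by $p$ since $p-1\in(0,1]$), together with Jensen's inequality $\bigl(\sum_{j\ne k}U_j\bigr)^p\le(\nu-1)^{p-1}\sum_{j\ne k}U_j^p$ and the pointwise bound $U_j^p\le U_k^{p-1}U_j$, yields
$$\Bigl|\sigma^p(x) - \sum_{i=1}^{\nu}U_i^p(x)\Bigr| \le C(\nu)\sum_{j\ne k}U_k^{p-1}(x)\,U_j(x).$$
Using the identity $(p-1)(n-2s)=4s$ and writing $y_i=\lambda_i(x-z_i)$, each cross term becomes
$$U_k^{p-1}(x)\,U_j(x)\asymp \lambda_k^{2s}\langle y_k\rangle^{-4s}\cdot\lambda_j^{(n-2s)/2}\langle y_j\rangle^{-(n-2s)}.$$

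The main geometric step is to compare this cross term with a suitable summand $V_m(x)$ of $V(x)$, where $m\in\{k,j\}$ is selected case by case. Two ingredients drive the analysis: the maximality of $U_k$ at $x$, which gives
$$\langle y_j\rangle\ge(\lambda_j/\lambda_k)^{1/2}\langle y_k\rangle,$$
and the bound $R\le R_{kj}/2$, where $R_{kj}=\max\bigl(\sqrt{\lambda_k/\lambda_j},\sqrt{\lambda_j/\lambda_k},\sqrt{\lambda_k\lambda_j}\,|z_k-z_j|\bigr)$. In the bubble-tower regime (one of the first two terms attains the max in $R_{kj}$), taking $m$ to be the larger-scale index and splitting according to whether $|y_m|\le R$ or $|y_m|\ge R/2$, the maximality inequality combined with $R\le R_{kj}/2$ gives $U_k^{p-1}U_j\le C(n) V_m(x)$. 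In the bubble-cluster regime ($R_{kj}=\sqrt{\lambda_k\lambda_j}\,|z_k-z_j|$), the condition $|y_k|\le R\le\tfrac12\sqrt{\lambda_k\lambda_j}\,|z_k-z_j|$ together with the triangle inequality forces $|x-z_j|\gtrsim|z_k-z_j|$, hence $\langle y_j\rangle\gtrsim\sqrt{\lambda_k\lambda_j}\,|z_k-z_j|\ge 2R$, which closes the estimate against $V_k$; the sub-region $|y_k|\ge R/2$ is handled symmetrically with the second summand of $V_k$ or $V_j$.

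The main obstacle is the case analysis in the second paragraph: one must pair each cross term with the correct summand of $V$, and the index $k=k(x)$ realising the pointwise maximum need not be the one with the largest scale. The maximality condition $U_k\ge U_j$ always supplies the weight on $\langle y_j\rangle$ needed to cancel the $R^{n-2s}$ factor in the ratio $U_k^{p-1}U_j/V_m$, but verifying this in every geometric configuration of the two bubbles requires some care. Once the geometry has been unpacked into the bubble-tower and bubble-cluster regimes of \cite{deng2021sharp}, every remaining step is a manipulation of the explicit form of the Talenti bubble \eqref{eqn:fractional-bubble} and the definition of $R$, which is insensitive to the order $s$ of the Laplacian.
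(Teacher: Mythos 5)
Your approach is correct and organized differently from the paper's. The paper first reduces to the two-bubble case via the pointwise inequality
\begin{align*}
\Bigl|\sigma^p-\sum_i U_i^p\Bigr|\le C\sum_{i\ne j}\bigl[(U_i+U_j)^p-U_i^p-U_j^p\bigr],
\end{align*}
and then, for a single pair, estimates $(U_1+U_2)^p-U_1^p-U_2^p$ region by region (core of $U_1$, outside it, core of $U_2$, outside it), using the explicit change of variables $y_i=\lambda_i(x-z_i)$ in each region and the same algebraic bound $|(1+t)^p-1-t^p|\lesssim t$ there. Your version avoids the $\nu=2$ reduction entirely: by selecting the pointwise-maximal index $k(x)$, expanding $\sigma=U_k(1+t)$, and applying that same algebraic bound once, you immediately reach $\bigl|\sigma^p-\sum_i U_i^p\bigr|\lesssim\sum_{j\ne k}U_k^{p-1}U_j$, and then compare each cross term with a suitable summand of $V$. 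This is a genuinely more compact reduction step, and the maximality inequality $\langle y_j\rangle\ge(\lambda_j/\lambda_k)^{1/2}\langle y_k\rangle$ is exactly the extra leverage that makes the per-term comparison possible where the paper's regional reduction does the same job. Both proofs are insensitive to the fractional parameter $s$, as you note.

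One caution on the cluster case as you wrote it: the inference from $|y_k|\le R\le\tfrac12\sqrt{\lambda_k\lambda_j}\,|z_k-z_j|$ to $|x-z_j|\gtrsim|z_k-z_j|$ only follows when $\lambda_j\le\lambda_k$ (so that $|x-z_k|\le R/\lambda_k\le\tfrac12|z_k-z_j|$); when $\lambda_j>\lambda_k$ and $|y_k|$ is of order $R$, the triangle inequality does not by itself force $x$ away from $z_j$. The gap closes by invoking maximality directly: if $|y_k|>\sqrt{\lambda_k/\lambda_j}\,R$ then $\langle y_j\rangle\ge(\lambda_j/\lambda_k)^{1/2}\langle y_k\rangle>R$, and the ratio $U_k^{p-1}U_j/v_{j,2}=\bigl(\lambda_k R^2/(\lambda_j\langle y_k\rangle^2)\bigr)^{2s}<1$. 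So the correct summand is $v_{j,2}$ rather than one of $V_k$'s; this is consistent with your caveat that $m\in\{k,j\}$ must be chosen case by case, but the one subcase you spell out is not the one that covers $\lambda_j>\lambda_k$.
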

\begin{proof}
We first prove the desired estimate when $\nu=2$. Then since 
\begin{align*}
\left|\sigma^p - \sum_{i=1}^{\nu} U_i^p\right| \leq C \sum_{i\neq j}\left[(U_i+U_j)^p -U_i^p-U_j^p\right]
\end{align*}
we can conclude the proof in the case when $\nu>2.$ Thus for the remainder of the proof assume that $\nu = 2.$ As the bubbles are weakly interacting, the bubbles $U_1$ and $U_2$ either form a bubble tower or a bubble cluster. We will analyze each case separately.

Case 1. (Bubble Tower) Assume w.l.o.g. that $\lambda_1 > \lambda_2$, in other words $U_1$ is more concentrated than $U_2.$ Then $R_{12} = \sqrt{\frac{\lambda_1}{\lambda_2}} \gg 1.$ We will estimate the function $h =(U_1+U_2)^p - U_1^p-U_2^p$ in different regimes. 

Core region of $U_1$: We define the core region of $U_1$ as the following set
\begin{align}\label{core_u1}
    \operatorname{Core}(U_1) = \left\{x\in \R^n: |x-z_1| < \frac{1}{\sqrt{\lambda_1 \lambda_2}}\right\}
\end{align}
Working in the $z_1$ centered co-ordinates $y_1 = \lambda_1(x-z_1)$, we first have that
\begin{align*}
    U_1(x) = U[z_1,\lambda_1](x) = \lambda_1^{(n-2s)/2}U[0,1](y_1) = \lambda_1^{(n-2s)/2}U(y_1).
\end{align*}
Furthermore denoting $\lambda_2 = \lambda_1 R_{12}^{-2}$ and $\xi_2 = \lambda_1(z_2-z_1)$ we have the following identity
\begin{align*}
    \lambda_2(x-z_2) = R_{12}^{-2}\lambda_1 (x-z_1) + R_{12}^{-2}\lambda_1(z_1-z_2) = R_{12}^{-2}(y_1 -\xi_2).
\end{align*}
Thus we can express $U_2$ as follows
\begin{align*}
    U_2(x) &= U[z_2,\lambda_2](x) = c_{n,s} \left(\frac{\lambda_2}{1+\lambda_2^{2}|x-z_2|^{2}}\right)^{\frac{n-2s}{2}} \\
    &= c_{n,s} \left(\frac{\lambda_1 R_{12}^{-2}}{1+R_{12}^{-4s}|y_1-\xi_2|^{2}}\right)^{\frac{n-2s}{2}}  =  \frac{c_{n,s}\lambda_1^{(n-2s)/2} R_{12}^{2s-n}} {(1+R_{12}^{-4}|y_1-\xi_2|^{2})^{(n-2s)/2}}.
\end{align*}
Note that by definition of $R_{12}$ in \eqref{R_ij definition}, we have 
\begin{align*}
|\xi_2| = |\lambda_1(z_1-z_2)|\leq \frac{R_{12}\lambda_1}{\sqrt{\lambda_1 \lambda_2}}\leq R_{12}^2.
\end{align*}
Now we are in position to estimate $h$ in the core region of $U_1.$ Observe that for $x\in \operatorname{Core}(U_1)$ we have
\begin{align*}
    |y_1| = |\lambda_1(x-z_1)| < \frac{\lambda_1}{\sqrt{\lambda_1 \lambda_2}} = R_{12}
\end{align*}
and thus if $|y_1|\leq \frac{R_{12}}{2}$ we have that $U_2\lesssim U_1$. Therefore
\begin{align}\label{case_1: core U1}
    h \lesssim U_1^{p-1}U_2 &\approx \frac{\lambda_1^{(n+2s)/2} R_{12}^{2s-n}}{(1+|y_1|^2)^{(n+2s)/2}(1+R_{12}^{-4}|y_1-\xi_2|^{2})^{(n-2s)/2}}\nonumber\\
    &\lesssim \frac{\lambda_1^{(n+2s)/2} R_{12}^{2s-n}}{\langle y_1 \rangle^{n+2s} (1 + |y_1|^{-2})^{(n-2s)/2}} \lesssim \frac{\lambda_1^{(n+2s)/2} R_{12}^{2s-n}}{\langle y_1 \rangle^{4s} }.
\end{align}
Outside the Core region of $U_1$: In this case we consider $R_{12}/3 \leq  |y_1| \leq 2 R_{12}^2$ and thus we get $U_1  \approx \lambda_1^{(n-2s)/2} |y_1|^{2s-n}$. This is because 
\begin{align*}
    U_1 &= c_{n,s} \lambda_1^{(n-2s)/2} \frac{1}{(1+|y_1|^2)^{(n-2s)/2}} \lesssim \lambda_1^{(n-2s)/2} |y_1|^{2s-n},  \\
    \lambda_1^{(n-2s)/2} |y_1|^{2s-n}&\lesssim \frac{\lambda_1^{(n-2s)/2}}{(9|y_1|^2+|y_1|^2)^{(n-2s)/2}} \lesssim U_1 = c_{n,s}  \frac{\lambda_1^{(n-2s)/2}}{(1+|y_1|^2)^{(n-2s)/2}},
\end{align*}
where for the second estimate we used the fact that $\frac{1}{3} \leq \frac{R_{12}}{3} \leq |y_1|$ implies $9|y_1|^2 \geq 1.$ On the other hand, $U_2 \approx \lambda_1^{(n-2s)/2} R_{12}^{2s-n}.$ This is because
\begin{align*}
    U_2 &= \frac{c_{n,s}\lambda_1^{(n-2s)/2} R_{12}^{2s-n}} {(1+R_{12}^{-4}|y_1-\xi_2|^{2})^{(n-2s)/2}} \lesssim \lambda_1^{(n-2s)/2} R_{12}^{2s-n}, \\
     \lambda_1^{(n-2s)/2} R_{12}^{2s-n} &\lesssim U_2 = \frac{c_{n,s}\lambda_1^{(n-2s)/2} R_{12}^{2s-n}} {(1+R_{12}^{-4}|y_1-\xi_2|^{2})^{(n-2s)/2}},
\end{align*}
where for the second estimate we used the fact that $|\xi_2|^2\leq R_{12}^4$ and $|y_1|^2\leq 4R_{12}^4$ to get
\begin{align*}
R_{12}^{-4}|y_1-\xi_2|^2 \leq R_{12}^{-4}(|y_1|^2 + |\xi_2|^2) \leq 5 \lesssim 1.
\end{align*}
Thus we get
\begin{align}\label{case 1: outside core u1}
    h  &= U_2^p \left[(1+U_1/U_2)^p - 1 - (U_1/U_2)^p\right] \nonumber\\&\lesssim \lambda_{1}^{(n+2s) / 2} R_{12}^{-(n+2s)}\left|\left(1+\frac{R_{12}^{n-2s}}{|y_1|^{n-2s}}\right)^{p}-1-\left(\frac{R_{12}}{|y_1|}\right)^{p(n-2s)}\right| \nonumber\\ 
    &\lesssim \frac{\lambda_{1}^{(n+2s) / 2} R_{12}^{-(n+2s)}}{|y_1/R_{12}|^{n-2s}} \approx \frac{\lambda_{1}^{(n+2s) / 2} R_{12}^{-4s}}{\left\langle y_{1}\right\rangle^{n-2s}}.
\end{align}

Core region of $U_2$: We define the core region of $U_2$ as
\begin{align}\label{core_u2}
    \operatorname{Core}(U_2) = \left\{x\in \R^n: |x-z_2| < \frac{1}{\lambda_2} \sqrt{\frac{\lambda_1}{\lambda_2}}\right\}.
\end{align}
Note that in the $z_2$ centered co-ordinates $y_2 = \lambda_2(x-z_2)$ the points $x\in \R^n$ satisfying $|y_2|< R_{12}$ are precisely the ones forming $\operatorname{Core}(U_2).$ Like before, in these new co-ordinates we can re-write $U_2(x) = \lambda_2^{(n-2s)/2} U(y_2)$ and $U_1$ as
\begin{align*}
U_{1}(x)=\frac{\lambda_{2}^{(n-2s) / 2} R_{12}^{2s-n}}{\left(R_{12}^{-4}+\left|y_{2}-\xi_{1}\right|^{2}\right)^{(n-2s) / 2}}
\end{align*}
where $\xi_1 = \lambda_2(z_1-z_2)$ such that 
\begin{align*}
    |\xi_1|=\lambda_2|z_1-z_2|\leq \sqrt{\frac{\lambda_2}{\lambda_1}} \sqrt{\lambda_1\lambda_2}|z_1-z_2|\leq  \sqrt{\frac{\lambda_2}{\lambda_1}}\sqrt{\frac{\lambda_1}{\lambda_2}}=1.
\end{align*}
Since $y_2 - \xi_1 = R_{12}^{-2}y_1$, then in the region $1\leq |y_2-\xi_1| \leq R_{12}/2$ we have $R_{12}^2 \leq |y_1| \leq R_{12}^3/2.$ This is indeed the core region of $U_2$ as $$|y_2|\leq |\xi_1| + R_{12}^{-2}|y_1|\leq  1+R_{12}/2\leq R_{12}$$
and thus $U_1\lesssim U_2$ which implies that
\begin{align}\label{case 1: core of u2}
    h \lesssim U_2^{p-1} U_1 \lesssim \frac{\lambda_2^{(n+2s)/2}R_{12}^{2s-n}}{\langle y_2 \rangle^{4s}}.
\end{align}

Outside the Core region $U_2$: In this region $|y_2-\xi_2| \geq R_{12}/3 $ and, therefore
\begin{align}\label{case 1: outside core of u2}
h \lesssim U_{2}^{p} \lesssim \frac{\lambda_{1}^{(n+2s) / 2} R_{12}^{-2s}}{\left\langle y_{1}\right\rangle^{n}} \lesssim \frac{\lambda_{1}^{(n+2s) / 2} R_{12}^{-4s}}{\left\langle y_{1}\right\rangle^{n-2s}}.
\end{align}

Thus if we put together estimates \eqref{case_1: core U1}, \eqref{case 1: outside core u1}, \eqref{case 1: core of u2} and \eqref{case 1: outside core of u2} we get
\begin{align}\label{h key estimate}
    h \lesssim \sum_{i=1}^{2}\left(\frac{\lambda_{i}^{\frac{n+2s}{2}} R^{2s-n}}{\left\langle y_{i}\right\rangle^{4s}} \chi_{\left\{\left|y_{i}\right| \leq R / 2\right\}}+\frac{\lambda_{i}^{\frac{n+2s}{2}} R^{-4s}}{\left\langle y_{i}\right\rangle^{n-2s}} \chi_{\left\{\left|y_{i}\right| \geq R / 3\right\}}\right),
\end{align}
which in particular implies that $h(x)\leq V(x) C(n,s)$ for some positive dimension and $s$ dependent constant $C(n,s)>0$. Thus, if the bubbles form a tower then Lemma \ref{lemma h bound in double star} holds. The proof when the bubbles form a cluster also follows from the same argument as in the proof of Proposition 3.3 in \cite{deng2021sharp} with the minor modifications in the exponents involving the parameter $s.$
\end{proof}
Next, we look at a system similar to \eqref{first approx system} and deduce an a priori estimate on its solution.

\begin{lemma}\label{first approx: baby existence lemma}
There exists a positive $\delta_{0}$ and a constant $C$, independent of $\delta$, such that for all $\delta \leqslant \delta_{0}$, if $\left\{U_{i}\right\}_{1 \leq i \leq \nu}$ is a $\delta$ -interacting bubble family and $\phi$ solves the equation
\begin{align}\label{first approx: baby pde}
    \left\{\begin{array}{l}
(-\Delta)^s \phi-p \sigma^{p-1} \phi+h=0 \\
\innerhs{\phi}{Z_i^a}=0, \quad i=1, \cdots, \nu ; a=1, \cdots, n+1
\end{array}\right.
\end{align}
then
\begin{align}\label{phi leq h}
\|\phi \|_{*} \leq C\|h\|_{* *},
\end{align}
where the norms $\|\phi \|_{*} = \sup_{x\in \R^n} |\phi(x)|W^{-1}(x)$ and $\normds{h} = \sup_{x\in \R^n}|h(x)|V^{-1}(x)$ with weight functions
\begin{align*}
    V(x) &= \sum_{i=1}^{\nu}\left(\frac{\lambda_{i}^{\frac{n+2s}{2}} R^{2s-n}}{\left\langle y_{i}\right\rangle^{4s}} \chi_{\left\{\left|y_{i}\right| \leq R\right\}}+\frac{\lambda_{i}^{\frac{n+2s}{2}} R^{-4s}}{\left\langle y_{i}\right\rangle^{n-2s}} \chi_{\left\{\left|y_{i}\right| \geq R / 2\right\}}\right),\\
    W(x) &= \sum_{i=1}^{\nu}\left(\frac{\lambda_{i}^{\frac{n-2s}{2}} R^{2s-n}}{\left\langle y_{i}\right\rangle^{2s}} \chi_{\left\{\left|y_{i}\right| \leq R\right\}}+\frac{\lambda_{i}^{\frac{n-2s}{2}} R^{-4s}}{\left\langle y_{i}\right\rangle^{n-4s}} \chi_{\left\{\left|y_{i}\right| \geq R / 2\right\}}\right).
\end{align*}
\end{lemma}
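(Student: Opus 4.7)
The estimate \eqref{phi leq h} will be established by a contradiction/blow-up argument in the spirit of Proposition~3.8 of \cite{deng2021sharp}, combined with the non-degeneracy theorem of \cite{PinoSpectral} (quoted as Theorem~\ref{PinoTheorem} above). Suppose the conclusion fails. Then there exist sequences $\delta_k\to 0$, a $\delta_k$-interacting family $(U_i^{(k)})_{i=1}^{\nu}$ of bubbles with parameters $(z_i^{(k)},\lambda_i^{(k)})$, functions $\phi_k,h_k$ satisfying \eqref{first approx: baby pde}, and normalizations
\begin{align*}
\|\phi_k\|_* = 1, \qquad \|h_k\|_{**}\to 0.
\end{align*}
Define $R_k$ as in \eqref{R definition} and record that, since the family is $\delta_k$-interacting, $R_k\to\infty$.

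The first step is a blow-up analysis around each bubble. Writing $y_{i,k}=\lambda_i^{(k)}(x-z_i^{(k)})$ and rescaling
\begin{align*}
\tilde\phi_{i,k}(y) := (\lambda_i^{(k)})^{-(n-2s)/2}\,\phi_k\bigl(z_i^{(k)}+y/\lambda_i^{(k)}\bigr),
\end{align*}
the normalization $\|\phi_k\|_*=1$ together with the definition of $W$ gives the pointwise bound
$|\tilde\phi_{i,k}(y)|\lesssim R_k^{2s-n}\langle y\rangle^{-2s}$ on $\{|y|\le R_k\}$, hence uniformly bounded on compact sets of $\R^n$. The rescaled equation reads
\begin{align*}
(-\Delta)^s\tilde\phi_{i,k} - p\,\tilde\sigma_{i,k}^{\,p-1}\tilde\phi_{i,k} + \tilde h_{i,k}=0,
\end{align*}
with $\tilde\sigma_{i,k}\to U$ locally uniformly and $\|\tilde h_{i,k}\|_{L^\infty_{\mathrm{loc}}}\to 0$ by the bound on $\|h_k\|_{**}$ and the decay of $V$. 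Standard interior regularity for the fractional Laplacian (passing through the Riesz representation of $(-\Delta)^{-s}$) upgrades the uniform bound to $C^{2s+\alpha}_{\mathrm{loc}}$ compactness, so along a subsequence $\tilde\phi_{i,k}\to \phi_i^\ast$ with
\begin{align*}
(-\Delta)^s\phi_i^\ast = pU^{p-1}\phi_i^\ast,\qquad |\phi_i^\ast(y)|\lesssim \langle y\rangle^{-(n-2s)}\cdot(\text{harmless factor}),
\end{align*}
so in particular $\phi_i^\ast\in L^\infty(\R^n)$. The orthogonality $\langle \phi_k,Z_i^a\rangle_{\dot H^s}=0$ translates, via the identities $(-\Delta)^s Z_i^a = pU_i^{p-1}Z_i^a$ used in Lemma~\ref{fractional-spectral-lemma}, into $\int U^{p-1}Z^a\phi_i^\ast=0$ for each $a=1,\dots,n+1$. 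Theorem~\ref{PinoTheorem} then forces $\phi_i^\ast\equiv 0$.

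The final step is to convert the local vanishing $\phi_i^\ast\equiv 0$ into a contradiction with $\|\phi_k\|_*=1$. The strategy is to invert $(-\Delta)^s$ and write
\begin{align*}
\phi_k(x) = c_{n,s}\int_{\R^n}\frac{p\sigma_k(y)^{p-1}\phi_k(y) - h_k(y)}{|x-y|^{n-2s}}\,dy,
\end{align*}
and to prove a convolution estimate showing that the Riesz potential maps $\|\cdot\|_{**}$ into $\|\cdot\|_*$: explicitly, for each $i$ one verifies
\begin{align*}
\int_{\R^n}\frac{V(y)}{|x-y|^{n-2s}}\,dy \;\lesssim\; W(x)
\end{align*}
by splitting according to core / outside-core regions of each bubble exactly as in the proof of Lemma \ref{lemma h bound in double star}. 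Applied to the identity above, this yields
$\|\phi_k\|_* \lesssim \|p\sigma_k^{p-1}\phi_k\|_{**} + \|h_k\|_{**}$, and the first term can be bounded by $o(1)\|\phi_k\|_*$ away from all bubble centers (using $\sigma_k^{p-1}\lesssim V/W$ outside the cores) plus a term concentrated in the cores, which by the local convergence $\tilde\phi_{i,k}\to 0$ on compacts is $o(1)$. Combined with $\|h_k\|_{**}\to 0$, this gives $\|\phi_k\|_*=o(1)$, contradicting the normalization.

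\textbf{Main obstacle.} The most delicate point is the last step: in contrast to the local case $s=1$, one cannot use a maximum principle directly on the linearized operator, because the non-locality prevents localizing sign information. Instead, the global pointwise control of $\phi_k$ must be extracted from the Riesz representation, which forces a careful region-by-region comparison of $V$ against the kernel $|x-y|^{2s-n}$ and a uniform $\sigma_k^{p-1}$ versus $V/W$ estimate in every one of the core/tower/cluster regimes identified in Lemma~\ref{lemma h bound in double star}. Controlling the cross terms from distinct bubbles in this convolution, in particular to ensure that the contribution of $\sigma_k^{p-1}\phi_k$ in the cores of bubbles $j\neq i$ can still be absorbed by the blow-up limit vanishing, is the principal technical step.
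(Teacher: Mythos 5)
Your proposal takes a genuinely different route from the paper in its second half: the paper runs a three-case barrier (maximum-principle) argument, with the crucial ingredient being the pointwise differential inequality $(-\Delta)^s\tilde W\geq \alpha_{n,s}\tilde V$ (Proposition~\ref{laplace prop}), whereas you propose to invert $(-\Delta)^s$ via the Riesz potential and prove a convolution estimate $\int_{\R^n}V(y)|x-y|^{2s-n}\,dy\lesssim W(x)$. In principle this is a legitimate substitute — it trades the construction of barriers (and the hypergeometric computation behind Proposition~\ref{laplace prop}) for a direct Green's function calculation. However, as written your argument has two genuine gaps.

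First, the rescaling in the blow-up step is wrong, and this is not cosmetic. You set $\tilde\phi_{i,k}(y)=(\lambda_i^{(k)})^{-(n-2s)/2}\phi_k(\cdot)$, which together with $\|\phi_k\|_*=1$ yields $|\tilde\phi_{i,k}(y)|\lesssim R_k^{2s-n}\langle y\rangle^{-2s}$. Since $R_k\to\infty$, this already forces $\tilde\phi_{i,k}\to 0$ locally uniformly with no further input, so the non-degeneracy theorem plays no role and the conclusion conveys no information about the size of $\phi_k$ relative to $W$. The paper instead normalizes $\tilde\phi_k(y_1):=W^{-1}(x_k)\phi_k(x)$ (see \eqref{tilde phi h sigma defn}), so that $\tilde\phi_k(Y_k)=1$ at a bounded point $Y_k$; non-degeneracy then forces the limit to vanish, which genuinely contradicts $\tilde\phi(Y_\infty)=1$. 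If you adopt the correct normalization, the assertion you want — that the $*$-norm of $\phi_k$ restricted to a fixed compact set $\{|y_i|\leq M\}$ tends to zero — does follow, but you need to state it that way.

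Second, the convolution inequality $\int V(y)|x-y|^{2s-n}\,dy\lesssim W(x)$ is asserted but not proved, and it is not a trivial rephrasing of Lemma~\ref{lemma h bound in double star} (which estimates $|\sigma^p-\sum U_i^p|$, not a Riesz potential of $V$). It is also not automatic from Proposition~\ref{laplace prop}, since the latter concerns the modified weights $\tilde W,\tilde V$, and converting $(-\Delta)^s\tilde W\gtrsim\tilde V$ into $(-\Delta)^{-s}V\lesssim W$ requires a decay argument that justifies the comparison (a maximum-principle step you explicitly wanted to avoid). Similarly, the absorption of $\sigma_k^{p-1}\phi_k$ near the cores of tower/cluster bubbles $j$ with $\lambda_j\gg\lambda_i$ is precisely the regime where $\sigma_k^{p-1}W/V$ is not small, and you acknowledge this as ``the principal technical step'' without resolving it; the paper handles it by excising $\epsilon$-balls around $\tilde z_{ij}$ from $\Omega_i$ and treating those annular regions separately (Cases 2 and 3). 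Your outline would need an analogous region-by-region analysis to close this.
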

\begin{proof}
We will prove this result following the same contradiction-based argument outlined in \cite{deng2021sharp}. Thus, assume that the estimate \eqref{phi leq h} does not hold. Then there exists sequences $(U^{(k)}_i)_{i=1}^{\nu}$ of $\frac{1}{k}-$interacting family of bubbles, $h = h_k$ with $\normds{h_k}\to 0$ as $k\to \infty$ and $\phi = \phi_k$ such that $\normss{\phi_k} = 1$ and satisfies
\begin{align}
\left\{\begin{array}{l}
(-\Delta)^s \phi_k-p \sigma^{p-1} \phi_k+h_k=0 \\
\innerhs{\phi_k}{Z_i^{a^{(k)}}}, \quad i=1,\cdots,\nu; a=1, \cdots, n+1
\end{array}\right.
\end{align}
Here $Z_i^{a^{(k)}}$ are the partial derivatives of $U_i^{(k)}$ defined as in \eqref{deng-z-derivative-notation}. We will often drop the superscript for convenience and set $U_i = U_i^{(k)}$ and $\sigma = \sigma^{(k)} =\sum_{i=1}^{\nu} U_i^{(k)}.$ Next denote, $z_{ij}^{(k)} = \lambda_i^{(k)}(z_j^{(k)}-z_i^{(k)})$ for $i\neq j.$ For each such sequence we can assume that either $\lim_{k\to \infty} z_{ij}^{(k)} = \infty$ or $\lim_{k\to \infty} z_{ij}^{(k)}$ exists and $\lim_{k\to \infty} |z_{ij}^{(k)}|<\infty$. Thus define the following two index sets for $i\in I=\{1,2,\cdots,\nu\}$
\begin{align*}
I_{i, 1}&= \left\{j\in I\setminus \{i\} | \lim _{k \rightarrow \infty} z_{i j}^{(k)} \text{ exists and } \lim _{k \rightarrow \infty}|z_{i j}^{(k)}|<\infty\right\},\\
I_{i, 2}&=\left\{j\in I\setminus \{i\} | \lim _{k \rightarrow \infty} z_{i j}^{(k)}=\infty\right\}.
\end{align*}
Given a bubble $U_i$ we can further segregate it from bubble $U_j$, for $j\neq i$, based on whether they form a bubble cluster or bubble tower with respect to each other
\begin{align*}
T_{i}^{+} &=\left\{j\in I\setminus \{i\} \mid \lambda_{j}^{(k)} \geq \lambda_{i}^{(k)}, R_{ij}^{(k)}=\sqrt{\lambda_{j}^{(k)} / \lambda_{i}^{(k)}}\right\} \\ 
T_{i}^{-} &=\left\{j\in I\setminus \{i\} \mid \lambda_{j}^{(k)}<\lambda_{i}^{(k)}, R_{ij}^{(k)}=\sqrt{\lambda_{i}^{(k)} / \lambda_{j}^{(k)}}\right\} \\ 
C_{i}^{+} &=\left\{j\in I\setminus \{i\} |\lim _{k \rightarrow \infty} \lambda_{j}^{(k)} / \lambda_{i}^{(k)}=\infty, R_{ij}^{(k)}=\sqrt{\lambda_{i}^{(k)} \lambda_{j}^{(k)}}| z_{i}^{(k)}-z_{j}^{(k)}|\right\} \\ 
C_{i}^{-} &=\left\{j\in I\setminus \{i\}|\lim _{k \rightarrow \infty} \lambda_{j}^{(k)} / \lambda_{i}^{(k)}<\infty, R_{ij}^{(k)}=\sqrt{\lambda_{i}^{(k)} \lambda_{j}^{(k)}}| z_{i}^{(k)}-z_{j}^{(k)}|\right\} .
\end{align*}
Setting $y_i^{(k)} = \lambda_i^{(k)}(x-z_i^{(k)})$ and $L_i = \max_{j\in I_{i,1}}\{\lim_{k\to \infty} |z_{ij}^{(k)}|\}+L$ we define
\begin{align}
    \Omega^{(k)} &= \bigcup_{i\in I} \{|y_i^{(k)}|\leq L_i\}, \\
    \Omega_{i}^{(k)} &=\left\{|y_{i}^{(k)}| \leq L_i\right\} \bigcap\left(\bigcap_{j \in T_{i}^{+} \cup C_{i}^{+}}\left\{\left|y_{i}^{(k)}-z_{i j}^{(k)}\right| \geq \epsilon\right\}\right) \label{omega defn}
\end{align}
where $i\in I$, $j\neq i$, $L>0$ is a large constant and $\epsilon>0$ is a small constant to be determined later. Observe that for large $k$, our choice of $L_i$ ensures that $\{|y_i^{(k)}-z_{ij}^{(k)}|=\epsilon\}\subset \{|y_i^{(k)}|\leq L_i\}$ for $j\in T_i^+\cup C_i^+.$

Dropping superscripts we re-write the weight functions $W$ and $V$ as defined earlier in the following manner
\begin{align*}
    W = \sum_{i=1}^{\nu} w_{i,1} + w_{i,2},\quad V = \sum_{i=1}^{\nu} v_{i,1} + v_{i,2},
\end{align*}
where for each $i\in I$ and $R=\frac{1}{2}\min_{j\neq i} R_{ij}^{(k)}\to \infty$ as $k\to \infty$, we have
\begin{align}\label{w and v defn}
w_{i, 1}(x) &=\frac{\lambda_{i}^{\frac{n-2s}{2}} R^{2s-n}}{\left\langle y_{i}\right\rangle^{2s}} \chi_{\left\{\left|y_{i}\right| \leq R\right\}}, \quad w_{i, 2}(x)=\frac{\lambda_{i}^{\frac{n-2s}{2}} R^{-4s}}{\left\langle y_{i}\right\rangle^{n-4s}} \chi_{\left\{\left|y_{i}\right| \geq R / 2\right\}}, \\ v_{i, 1}(x) &=\frac{\lambda_{i}^{\frac{n+2s}{2}} R^{2s-n}}{\left\langle y_{i}\right\rangle^{4s}} \chi_{\left\{\left|y_{i}\right| \leq R\right\}}, \quad v_{i, 2}(x)=\frac{\lambda_{i}^{\frac{n+2s}{2}} R^{-4s}}{\left\langle y_{i}\right\rangle^{n-2s}} \chi_{\left\{\left|y_{i}\right| \geq R / 2\right\}}.
\end{align}
Since $\normss{\phi_k}=1$, by definition we have that $|\phi_k|(x) \leq W(x)$. In particular, there exists a subsequence such that $|\phi_k|(x_k)=W(x_k).$ We will now analyze several cases based on where this subsequence $\{x_k\}_{k \in \N}$ lives. 

Case 1. Suppose $\{x_k\}_{k \in \N}\in \Omega_i$ for some $i\in I.$ W.l.o.g. $i=1$ and set
\begin{align}\label{tilde phi h sigma defn}
\left\{\begin{array}{l}\tilde{\phi}_{k}\left(y_{1}\right):=W^{-1}\left(x_{k}\right) \phi_{k}(x) \quad \text { with } y_{1}=\lambda_{1}\left(x-z_{1}\right) \\ \tilde{h}_{k}\left(y_{1}\right):=\lambda_{1}^{-2s} W^{-1}\left(x_{k}\right) h_{k}(x), \quad \tilde{\sigma}_{k}\left(y_{1}\right):=\sigma_{k}(x)\end{array}\right.
\end{align}
Then since $\phi_k$ solves the system \eqref{first approx: baby pde}, the rescaled function $\tilde{\phi}_k$ and $\tilde{h}_k$ satisfy
\begin{align}\label{tilde phi_k system}
\left\{\begin{array}{ll}
(-\Delta)^s \tilde{\phi}_{k}\left(y_{1}\right)-p \lambda_{1}^{-2s} \tilde{\sigma}_{k}^{p-1}\left(y_{1}\right) \tilde{\phi}_{k}\left(y_{1}\right)+\tilde{h}_{k}\left(y_{1}\right)=0 & \text { in } \mathbb{R}^{n} \\ \int_{\R^n} U^{p-1} Z_{1}^{a} \tilde{\phi}_{k} d y_{1}=0, & 1 \leq a \leq n+1\end{array}\right.
\end{align}
where $U = U[0,1](y_1)$. Set, $\tilde{z}_j = z_{1j}^{(k)}$, $\bar{z}_j = \lim_{k\to \infty} \tilde{z}_{1j}^{(k)}$ and define
\begin{align}\label{Set Km defn}
E_{1} &=\bigcap_{j \in T_{1}^{+} \cap I_{1,1}}\left\{\left|y_{1}-\tilde{z}_{j}\right| \geq 1 / M\right\}, \quad E_{2}=\bigcap_{j \in C_{1}^{+} \cap I_{1,1}}\left\{\left|y_{1}-\bar{z}_{j}\right| \geq\left|\bar{z}_{j}\right| / M\right\}, \nonumber\\
K_M &=\left\{\left|y_{1}\right| \leq M\right\} \cap E_{1} \cap E_{2}.
\end{align}
If we choose $M$ to be large enough then $\Omega_1^{(k)}\ssubset K_M$ for $k$ large enough. To prove estimate \eqref{phi leq h} in Case 1 we need to make use of the following proposition.

\begin{proposition}\label{Proposition 1}
In each compact subset $K_{M}$, it holds that, as $k \rightarrow \infty$
\begin{align}\label{Proposition 1: convergence}
\lambda_{1}^{-2s} \tilde{\sigma}_{k}^{p-1} \rightarrow U[0,1], \quad\left|\tilde{h}_{k}\right| \rightarrow 0
\end{align}
uniformly. Moreover, we have
\begin{align}\label{Proposition 1: estimate}
\left|\tilde{\phi}_{k}\right|\left(y_{1}\right) \lesssim\left|y_{1}-\tilde{z}_{j}\right|^{4s-n}+1, \quad j \in T_{1}^{+} \cup C_{1}^{+}.
\end{align}
\end{proposition}

Postponing the proof of Proposition \ref{Proposition 1} we strive to prove \eqref{phi leq h}. First note that by elliptic regularity theory and \eqref{Proposition 1: convergence} we get that $\tilde{\phi}_k$ converges upto a subsequence in $K_M$. When $M\to \infty$, using a diagonal argument we get that $\tilde{\phi}_k\to \tilde{\phi}$ weakly such that $\tilde{\phi}$ satisfies
\begin{align}\label{case 1: weak soln system}
\left\{\begin{array}{ll}
(-\Delta)^s \tilde{\phi}-p U^{p-1} \tilde{\phi}=0, & \text { in } \mathbb{R}^{n} \backslash\left\{\bar{z}_{j} \mid j \in\left(T_{1}^{+} \cup C_{1}^{+}\right) \cap I_{1,1}\right\} \\ |\tilde{\phi}|\left(y_{1}\right) \lesssim\left|y_{1}-\bar{z}_{j}\right|^{4s-n}+1, & j \in\left(T_{1}^{+} \cup C_{1}^{+}\right) \cap I_{1,1}, \\ \int_{\R^n} U^{p-1} Z^{a} \tilde{\phi} d y_{1}=0, & 1 \leq a \leq n+1\end{array}\right.
\end{align}
The orthogonality condition implies $\tilde{\phi}$ is orthogonal to the kernel of the linearized operator $(-\Delta)^s - pU^{p-1}$ and thus $\tilde{\phi} =0.$ On the other hand since $|Y_k|= |\lambda_1(x_k-z_1)|\leq L_1$ upto a subsequence we have that $Y_k\to Y_{\infty}$ and thus $\tilde{\phi}(Y_{\infty}) = 1$ which is clearly a contradiction.

Case 2. Suppose $\{x_k\}_{k \in \N}\subset \Omega \setminus \bigcup_{i\in I}\Omega_i.$ Then there exists an index $i\in I$ such that there exists a subsequence $\{x_k\}_{k \in \N}$ satisfying
\begin{align*}
    \{x_k\}_{k \in \N}\subset A_i = \bigcup_{j \in T_{i}^{+} \cup C_{i}^{+}}\left\{\left|y_{i}\right| \leq L_i,\left|y_{i}-\tilde{z}_{i j}\right| \leq \epsilon,\left|y_{j}\right| \geq L_j\right\}.
\end{align*}
We assume w.l.o.g. that $i=1$. For $\mu\in (0,1/2)$ define the function
\begin{align}\label{min approx}
F(a, b)=\frac{a+b}{2}-\sqrt{\left(\frac{a-b}{2}\right)^{2}+\mu a b},
\end{align}
which approximates the function $\min\{a,b\}.$ Using $F$ we define new weight functions $\tilde{W}$ and $\tilde{V}$ as follows
\begin{align}\label{tilde w and v defn}
\tilde{W}(x)=\sum_{j \in J_{1}} \lambda_{j}^{\frac{n-2s}{2}} F\left(\frac{R^{2s-n}}{\left\langle y_{j}\right\rangle^{2s}}, \frac{R^{-4s}}{\left\langle y_{j}\right\rangle^{n-4s}}\right)+w_{1,1}(x),\\\tilde{V}(x)=\sum_{j \in J_{1}} \lambda_{j}^{\frac{n+2s}{2}} F\left(\frac{R^{2s-n}}{\left\langle y_{j}\right\rangle^{4s}}, \frac{R^{-4s}}{\left\langle y_{j}\right\rangle^{n-2s}}\right)+v_{1,1}(x).
\end{align}
Then 
\begin{align*}
\tilde{W}(x) &\approx \sum_{j \in J_{1}}\left(w_{j, 1}(x)+w_{j, 2}(x)\right)+w_{1,1}(x) \\ \tilde{V}(x) &\approx \sum_{j \in J_{1}}\left(v_{j, 1}(x)+v_{j, 2}(x)\right)+v_{1,1}(x),
\end{align*}
where $J_1 = T_{1}^{+} \cup C_{1}^{+}$. We need the following three propositions to conclude estimate \eqref{phi leq h} for Case 2.
\begin{proposition}\label{laplace prop}
The functions $\tilde{W}$ and $\tilde{V}$ satisfy
\begin{align}\label{laplace estimate}
    (-\Delta)^s \tilde{W}\geq \alpha_{n,s} \tilde{V},
\end{align}
where $\alpha_{n,s}>0$ is a constant depending on $n$ and $s.$
\end{proposition}

\begin{proposition}\label{claim 2}
For two bubbles $U_{i}$ and $U_{j}$, suppose $k$ large enough, in the region $\left\{\left|y_{i}\right| \geq L_i,\left|y_{j}\right| \geq L_j\right\}$ it holds that
\begin{align}\label{claim 2: estimate}
\sum_{m, l \in\{i,j\}} U_{m}^{p-1}\left(w_{l, 1}+w_{l, 2}\right) \leq \epsilon_{1} \sum_{l \in\{i,j\}}\left(v_{l, 1}+v_{l, 2}\right)
\end{align}
with $\epsilon_1>0$ depending on $L, n, s$ and $\epsilon.$
\end{proposition}

\begin{proposition}\label{claim 3}
For bubble $U_{i}$, let $J_{i}=T_{i}^{+} \cup C_{i}^{+} .$ In the region $A_{i}$, we have
\begin{align}\label{claim 3: estimate}
U_{i}^{p-1} \sum_{j \in J_{i}}\left(w_{j, 1}+w_{j, 2}\right) & \leq \epsilon_{1} \sum_{j \in J_{i}}\left(v_{j, 1}+v_{j, 2}\right) \\
\sum_{j \in J_{i}} U_{j}^{p-1} w_{i, 1} & \leq \epsilon_{1} \sum_{j \in J_{i}}\left(v_{j, 1}+v_{j, 2}\right)+\epsilon_{1} v_{i, 1}
\end{align}
when $k$ is large enough.
\end{proposition}
From Proposition \ref{laplace prop} we have $(-\Delta)^s \tilde{W}\geq \alpha_{n,s} \tilde{V}$. Furthermore in $A_1$ it is clear that $U_1 \gg \sum_{j\in T_1^{-}\cup C_1^{-}} U_j$ and therefore
\begin{align*}
\sigma_{k}^{p-1}=\left(\sum_{j \in J_{1}} U_{j}+U_{1}+\sum_{j \in T_{1}^{-} \cup C_{1}^{-}} U_{j}\right)^{p-1} \leq \sum_{j \in J_{1}} U_{j}^{p-1}+C U_{1}^{p-1}
\end{align*}
where $C=C(n,s)>0$ is constant depending on $n$ and $s.$ This estimate combined with \eqref{claim 2: estimate} and \eqref{claim 3: estimate} implies
\begin{equation}\label{non-linear term estimate}
\begin{aligned}
\sigma_{k}^{p-1} \tilde{W}\leq& \sum_{i, j \in J_{1}} U_{i}^{p-1}\left(w_{j, 1}+w_{j, 2}\right)+C U_{1}^{p-1} \sum_{j \in J_{1}}\left(w_{j, 1}+w_{j, 2}\right) \\ &+w_{1,1} \sum_{j \in J_{1}} U_{j}^{p-1}+C U_{1}^{p-1} w_{1,1} \\
&\leq \left(4\nu^2 \epsilon_{1}+C\right) \tilde{V},
\end{aligned}
\end{equation}
in the region $A_1.$ Thus choosing large $L$ and small $\epsilon$, we get
\begin{align*}
(-\Delta)^s \tilde{W}-p \sigma_{k}^{p-1} \tilde{W} \geq \tilde{V},\quad \text{ in } A_{1}.
\end{align*}
From Case 1, for large $k$, we have
\begin{align}\label{Proposition 1 estimate}
    |\phi_k|(x)\leq C \normds{h_k}W(x),\quad \forall x \in \bigcup_{i=1}^\nu\Omega_i
\end{align}
where $C=C(n,\nu,s)>0$ is a large constant. This is because if \eqref{Proposition 1 estimate} were not true then for any $\gamma\in \N$ there exists $k=k(\gamma)$ and $x_k\in A$ such that
\begin{align*}
    |\phi_k|(x_k)W^{-1}(x_k)>\gamma \normds{h_k}.
\end{align*}
W.l.o.g. assume that $|\phi_k|(x_k)W^{-1}(x_k)=1$, then $\normds{h_k}\leq \frac{1}{\gamma}\to 0$ as $\gamma\to +\infty$ and $k=k(\gamma)\to +\infty.$ However this cannot happen as shown in Case 1. Next, for $i\in T_{1}^{-}$ with $|y_i|=\frac{\lambda_i}{\lambda_1}|y_1- \tilde{z}_i|\leq L_i,$ for large $k$, we have
\begin{equation}\label{w estimate}
\begin{aligned}
\frac{w_{1,1}}{w_{i, 1}}=\left(\frac{\lambda_{1}}{\lambda_{i}}\right)^{\frac{n-2s}{2}} \frac{\left\langle y_{i}\right\rangle^{2s}}{\left\langle y_{1}\right\rangle^{2s}} \geq L^{-2s}\left(\frac{\lambda_{1}}{\lambda_{i}}\right)^{\frac{n-2s}{2}} \gg 1, \\
\frac{v_{1,1}}{v_{i, 1}}=\left(\frac{\lambda_{1}}{\lambda_{i}}\right)^{\frac{n+2s}{2}} \frac{\left\langle y_{i}\right\rangle^{4s}}{\left\langle y_{1}\right\rangle^{4s}} \geq L^{-4s}\left(\frac{\lambda_{1}}{\lambda_{i}}\right)^{\frac{n+2s}{2}} \gg 1.
\end{aligned}
\end{equation}
Similarly if $i\in C_1^{-}$ then $|y_i| =\frac{\lambda_2}{\lambda_1}|y_1- \tilde{z}_i| \geq \frac{\lambda_i}{2\lambda_1}|\tilde{z}_i| = \frac{\lambda_i|z_1-z_i|}{2}$, which in turn implies
\begin{equation}\label{v estimate}
\begin{aligned}
\frac{w_{1,1}}{w_{i, 2}}&=R^{6s-n}\left(\frac{\lambda_{1}}{\lambda_{i}}\right)^{\frac{n-2s}{2}} \frac{\left\langle y_{i}\right\rangle^{n-4s}}{\left\langle y_{1}\right\rangle^{2s}} \geq 2^{4s-n} L^{-2s} R^{2s}\left(\frac{\lambda_{1}}{\lambda_{i}}\right) \gg 1, \\
\frac{v_{1,1}}{v_{i, 2}}&=R^{6s-n}\left(\frac{\lambda_{1}}{\lambda_{i}}\right)^{\frac{n+2s}{2}} \frac{\left\langle y_{i}\right\rangle^{n-2s}}{\left\langle y_{1}\right\rangle^{4s}} \geq 2^{2s-n} L^{-4s} R^{4s}\left(\frac{\lambda_{1}}{\lambda_{i}}\right)^{2s} \gg 1.
\end{aligned}
\end{equation}
Combining estimates \eqref{w estimate} and \eqref{v estimate} we get the following estimate in the region $A_1$
\begin{equation}\label{W and V estimate in A_1}
\begin{aligned}
W &\approx \tilde{W} + \sum_{j\in T_1^{-}\cup C_1^{-}} (w_{j,1} + w_{j,2}) \approx \tilde{W},\\
V &\approx \tilde{V} + \sum_{j\in T_1^{-}\cup C_1^{-}} (v_{j,1} + v_{j,2}) \approx \tilde{V}.
\end{aligned}
\end{equation}
For large $k$, $\partial A_1 \subset \cup_{i\in I}\partial \Omega_i$ and therefore from \eqref{Proposition 1 estimate} and \eqref{W and V estimate in A_1} we conclude that
\begin{align*}
|\phi_k|(x) \leq C \normds{h_k} \tilde{W}(x),\quad \forall x \in \partial A_1
\end{align*}
which along with \eqref{non-linear term estimate} implies that $\pm C\normds{h_k}\tilde{W}$ is an upper/lower barrier for $\phi_k$ and therefore
\begin{align*}
|\phi_k|(x_k) \tilde{W}^{-1}(x_k) \lesssim \normds{h_k} \to 0. 
\end{align*}
This contradicts the fact that $|\phi_k|(x_k) =W(x_k).$ 

Case 3. Finally consider the case $\{x_k\}_{k \in \N}\subset \Omega^c = \bigcup_{i\in I}\{|y_i|> L_i\}.$ Working with similar approximations of $W$ and $V$ as defined in \eqref{tilde w and v defn} and arguing as in the proof of Proposition \ref{laplace prop} we get that $(-\Delta)^s \tilde{W}\geq \alpha_{n,s} \tilde{V}$. Next using Proposition \ref{claim 2}, we get
\begin{align*}
\sigma_k^{p-1} \tilde{W} (x) \leq \nu^2 \epsilon_1 \tilde{V} (x),\quad  \forall x\in \Omega^c.  
\end{align*}
Consequently in the region $\Omega^c$, we get
\begin{align}
    (-\Delta)^s \tilde{W}-p \sigma_{k}^{p-1} \tilde{W} \geq \tilde{V}.
\end{align}
From the previous two cases we know that for large $k$, we have
\begin{align*}
    |\phi_k|(x) \leq C \normds{h_k} \tilde{W}(x)
\end{align*}
in the region $\Omega$ and thus the above estimate also holds on the boundary $\partial \Omega = \partial \Omega^c.$ Thus $\pm C \normds{h_k} \tilde{W}(x)$ is an upper/lower barrier for the function $\phi_k$. This implies that
\begin{align*}
    |\phi_k|(x) \leq C \normds{h_k} \tilde{W}(x), \quad \forall x\in \Omega,
\end{align*}
which in turn implies 
\begin{align*}
    |\phi_k|(x_k) \tilde{W}^{-1}(x_k) \leq C \normds{h_k} \to 0.
\end{align*}
This contradicts the fact $|\phi_k|(x_k) =W(x_k).$ To complete the proof we prove Proposition \ref{laplace prop} since the proof of the other propositions can be found in \cite{deng2021sharp} by modifying the exponents of the weights by the parameter $s.$

\textbf{Proof of Proposition \ref{laplace prop}}. Using the concavity of $F$ and the integral representation of the fractional laplacian we deduce that
\begin{align*}
    (-\Delta)^s\Tilde{W}(x)\geq \sum_{j\in J_1}\lambda_j^{\frac{n-2s}{2}}\left(\pa{F}(a_j, b_j) (-\Delta)^s a_j + \pb{F}(a_j, b_j) (-\Delta)^s b_j\right),
\end{align*}
where $a_j = R^{2s-n}\jp{y_j}^{-2s}$, $b_j=R^{-4s}\jp{y_j}^{4s-n}$ and $y_j=\lambda_j(x-z_j).$ Thus to obtain a lower bound for $(-\Delta)^s\tilde{W}$ we first show that
\begin{align*}
    (-\Delta)^s \langle y_j\rangle^{-2s}  \geq c_{n,s} \langle y_j\rangle^{-4s},\quad (-\Delta)^s \langle y_j\rangle^{n-4s}  \geq c_{n,s} \langle y_j\rangle^{n-2s}.
\end{align*}
We prove the first estimate since the argument for the second inequality is the same. Furthermore, by scaling we can consider the case when $\lambda_j=1$ and $z_j=0$. Thus we need to show that
\begin{align*}
    (-\Delta)^s[(1+|x|^2)^{-s}]\geq \alpha_{n,s} [1+|x|^2]^{-2s}.
\end{align*}
Using the hypergeometric function as in Table 1 in \cite{Kwasnicki}, we get
\begin{align*}
    (-\Delta)^s[(1+|x|^2)^{-s}] = c_{n,s} {}_{2}F_{1}\left(n/2+s,2s,n/2;-|x|^2\right)
\end{align*}
for constant $c_{n,s}>0$ depending on $n$ and $s.$ By the integral representation as in (15.6.1) in \cite{dlmf}, we have
\begin{align*}
 {}_{2}F_{1}\left(n/2+s,2s,n/2;-|x|^2\right) = \frac{1}{\Gamma(2s)\Gamma(n/2-2s)}\int_{0}^{1}\frac{t^{2s-1}(1-t)^{n/2-2s-1}}{(1+|x|^2t)^{n/2+s}}dt\geq 0 
\end{align*}
as $n>4s.$ Next observe that the left-hand side has no roots. This is because if we use transformation (15.8.1) in \cite{dlmf} we get
\begin{align*}
{}_{2}F_{1}\left(n/2+s,2s,n/2;-|x|^2\right) = (1+|x|^2)^{-2s}{}_{2}F_{1}\left(-s,2s,n/2;\frac{|x|^2}{1+|x|^2}\right)
\end{align*}
and therefore we can compute the number of zeros using (15.3.1) in \cite{dlmf} as suggested in \cite{advice} to get
\begin{align*}
    N(n,s) = \lfloor s \rfloor + \frac{1}{2}(1+S) = \frac{1}{2}(1+S)
\end{align*}
where $S = \operatorname{Sign}(\Gamma(-s)\Gamma(2s)\Gamma(n/2+s)\Gamma(n/2-2s))=-1$. Then since $n>4s$, we get $N(n,s) = 0.$ Thus the function ${}_{2}F_{1}\left(n/2+s,2s,n/2;-|x|^2\right)$ must be strictly positive and in particular there exists a constant $\alpha_{n,s}>0$ such that
\begin{align*}
    (-\Delta)^s[(1+|x|^2)^{-s}] > \alpha_{n,s} \geq \alpha_{n,s}[1+|x|^2]^{-2s}
\end{align*} 
which gives us the desired estimate. Finally using the homogeneity of the function $F$ we can conclude the proof of Proposition \ref{laplace prop}.
\end{proof}

The next result gives an estimate for the coefficients $c^i_a$ of the system \eqref{first approx system}. 

\begin{lemma}\label{lemma coefficient bound}
Let $\sigma$ be a sum of $\delta-$interacting bubbles and let $\phi, h$ and $c^j_b$ for $j=1,\cdots,\nu$ and $b=1,2,\cdots, n+1$ satisfy the system \eqref{first approx system}. Then
\begin{align}\label{coefficient estimate}
    |c^j_b|\lesssim Q \normds{h} + Q^p\normss{\phi},
\end{align}
where $Q$ is the interaction term as defined in \eqref{defn: Q interaction term}.
\end{lemma}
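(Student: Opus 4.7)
The plan is to derive a linear system for the $c^j_b$ by pairing the equation in \eqref{first approx system} in $L^2$ with each test function $Z^b_j$, $j\in I$, $b\in\{1,\dots,n+1\}$. Since $Z^b_j$ is an eigenfunction of the linearization at $U_j$, namely $(-\Delta)^s Z^b_j = pU_j^{p-1}Z^b_j$, the imposed orthogonality $\innerhs{\phi}{Z^b_j}=0$ yields both $\int((-\Delta)^s\phi)Z^b_j\,dx=0$ and $\int U_j^{p-1}\phi Z^b_j\,dx=0$. The pairing therefore collapses to
\begin{align*}
\sum_{i,a}c^i_a\int U_i^{p-1}Z^a_iZ^b_j\,dx \;=\; \int h\,Z^b_j\,dx \;+\; p\int(\sigma^{p-1}-U_j^{p-1})\phi\,Z^b_j\,dx.
\end{align*}

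First, I would show the coefficient matrix $M=(\int U_i^{p-1}Z^a_iZ^b_j)_{(i,a),(j,b)}$ is invertible with $\|M^{-1}\|=O(1)$. The diagonal blocks ($i=j$) are positive diagonal matrices, since distinct translation and scaling derivatives of a radial bubble are mutually $L^2_{U_i^{p-1}}$-orthogonal by direct parity arguments. The off-diagonal blocks ($i\neq j$) are bounded using $|Z^a_i|\lesssim U_i$, $|Z^b_j|\lesssim U_j$ and the interaction estimate of Proposition B.2 in \cite{FigGla20} to give $|\int U_i^{p-1}Z^a_iZ^b_j|\lesssim \int U_i^pU_j\lesssim q_{ij}\lesssim Q$. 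Hence $M=D+O(Q)$ with $D$ invertible and bounded, and $M$ is invertible for $\delta$ small.

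Next, I would estimate the two source terms. For the $h$-term, $|Z^b_j|\lesssim U_j$ and $|h|\leq V\normds{h}$ reduce matters to bounding $\int V U_j\,dx$; in the rescaled coordinate $y_j=\lambda_j(x-z_j)$ the dominant contribution is $\int v_{j,1}U_j\,dx \lesssim R^{2s-n}\int_{|y|\leq R}\langle y\rangle^{-(n+2s)}\,dy=O(R^{2s-n})=O(Q)$, and the other four terms ($v_{j,2}$, $v_{i,1}$, $v_{i,2}$ for $i\neq j$) yield at most $O(R^{-n})$ (using $n\geq 6s$ to ensure convergence of the relevant tails). Thus $|\int hZ^b_j|\lesssim Q\normds{h}$. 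For the $\phi$-term, the subtraction of $U_j^{p-1}$ is the crucial ingredient: since $n\geq 6s$ forces $p-1\leq 1$, subadditivity gives $|\sigma^{p-1}-U_j^{p-1}|\lesssim\sum_{k\neq j}U_k^{p-1}$, and we need to control $\int U_k^{p-1}WU_j\,dx$ for each $k\neq j$. Working in $y_k$-coordinates, the core contribution uses that $U_k^{p-1}\sim\lambda_k^{2s}R^{-4s}$ near the location of $z_j$ (a consequence of the definition of $R_{kj}$), which when paired against $w_{j,1}$'s prefactor $R^{2s-n}$ yields a sharp bound of $R^{-(n+2s)}\sim Q^p$; the tail region $\{|y_j|\geq R/2\}$ is handled analogously using $w_{j,2}$.

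Combining these estimates, $|c^j_b|\lesssim\|M^{-1}\|(Q\normds{h}+Q^p\normss{\phi})=Q\normds{h}+Q^p\normss{\phi}$, as claimed. The main technical obstacle is the sharp $Q^p$ estimate in the $\phi$-term: bounding $\int|\sigma^{p-1}\phi Z^b_j|$ directly, without exploiting the orthogonality cancellation, produces only an $O(Q)$ bound, whereas the desired $Q^p\ll Q$ requires both the subtraction of $U_j^{p-1}$ and a careful regional analysis distinguishing bubble clusters from bubble towers (mirroring the proof of Lemma \ref{lemma h bound in double star}).
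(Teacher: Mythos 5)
Your structure matches the paper's: pair \eqref{first approx system} with $Z^b_j$, use Lemma~\ref{Lemma A.5} to see the Gram matrix is $\gamma^b\delta_{ab}+O(q_{ij})$ (hence uniformly invertible for small $\delta$), bound $\int hZ^b_j$ by $\normds{h}\int V U_j\lesssim Q\normds{h}$, and exploit the orthogonality to replace $\sigma^{p-1}$ by $\sigma^{p-1}-U_j^{p-1}$ in the $\phi$-term. Up to that point you and the paper agree.

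The gap is in how you bound the $\phi$-term. You invoke subadditivity $|\sigma^{p-1}-U_j^{p-1}|\lesssim\sum_{k\neq j}U_k^{p-1}$ and then claim $\int U_k^{p-1}WU_j\lesssim Q^p$ with the core contribution "$\sim R^{-(n+2s)}$." That last step is wrong: in the core $\{|y_j|\le R\}$ (bubble-tower case, $U_j$ more concentrated), $U_k^{p-1}\approx\lambda_j^{2s}R^{-4s}$ is essentially constant, $U_jw_{j,1}\approx\lambda_j^{n-2s}R^{2s-n}\langle y_j\rangle^{-n}$, and
\begin{align*}
\int_{|y_j|\le R}U_k^{p-1}U_jw_{j,1}\,dx\;\approx\;R^{-4s}\cdot R^{2s-n}\int_{|y_j|\le R}\frac{dy_j}{\langle y_j\rangle^{n}}\;\approx\;R^{-(n+2s)}\log R,
\end{align*}
an extra $\log R\approx|\log Q|$ beyond $Q^p$. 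The loss comes from the subadditivity bound itself: when $U_k\le U_j$ the sharp estimate is $|\sigma^{p-1}-U_j^{p-1}|\approx U_j^{p-2}U_k$, and since $p\le2$ this is strictly smaller than $U_k^{p-1}$ by a factor $(U_k/U_j)^{2-p}$; dropping the $U_j^{p-2}$ decay near $z_j$ is exactly what produces the logarithm. The paper avoids this by bounding $(\sigma^{p-1}-U_j^{p-1})U_j\lesssim\sigma^p-\sum_iU_i^p=h\lesssim V$ (via Lemma~\ref{lemma h bound in double star}) and then $\int VW\lesssim Q^p$ by Lemma~\ref{Lemma B.2}, which retains that decay and, for $n>6s$, gives $R^{-(n+2s)}$ with no log. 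Your bound $Q^p|\log Q|\normss{\phi}$ would still suffice for the downstream uses (Lemmas~\ref{lemma: soln existence of approx system} and~\ref{lem:beta coeff bound} only need $|c^j_b|=o(1)\normss{\phi}+Q\normds{h}$), so this is a suboptimality rather than a fatal error, but the stated $Q^p$ bound is not reached by the argument you give, and the claim of a "sharp" $R^{-(n+2s)}$ core contribution is incorrect. Replace the subadditivity step with the paper's inequality $(\sigma^{p-1}-U_j^{p-1})U_j\lesssim\sigma^p-\sum_iU_i^p$ and the rest of your argument goes through.
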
 
\begin{proof}
For simplicity we set $j=1$ and $\nu=2.$ Multiplying $Z^b_1$ to \eqref{first approx system} and integrating we get
\begin{align*}
    \int_{\R^n} p\sigma^{p-1}\phi Z^b_1 = \int_{\R^n} hZ^b_1 + \sum_{a=1}^{n+1}c^{1}_a \int_{\R^n} U_1^{p-1}Z_1^a Z^b_1 + c^{2}_a\int_{\R^n} U_2^{p-1}Z_2^a Z^b_1.
\end{align*}
Using Lemma \ref{Lemma A.5} we get
\begin{align*}
    \sum_{a=1}^{n+1} \int_{\R^n} c^{1}_a U_1^{p-1}Z_1^a Z^b_1 + \int_{\R^n} c^{2}_a U_2^{p-1}Z_2^a Z^b_1 = c^1_b \gamma^b + \sum_{a=1}^{n+1} c^{2}_a O(q_{12}).
\end{align*}
Using Lemma \ref{lemma B.4} we can also estimate the first term on the RHS as follows
\begin{align*}
    \left| \int_{\R^n} h Z_1^b\right|\lesssim \normds{h} \int_{\R^n} V U_1 \lesssim Q \normds{h}. 
\end{align*}
Moving to the LHS, since $\phi$ satisfies
\begin{align*}
    \int_{\R^n} U_1^{p-1} \phi Z_1^b = 0 
\end{align*}
for $b=1,2,\cdots, n+1$ and $|Z_1^b|\lesssim U_1$ we have
\begin{align*}
\left|\int_{\R^n} p \sigma^{p-1} \phi Z_1^{b} \right| &=\left|\int_{\R^n} p (\sigma^{p-1} - U_1^{p-1}) \phi Z_1^{b} \right| \lesssim \normss{\phi} \left|\int_{\R^n} (\sigma^{p-1} - U_1^{p-1}) U_1 W \right| \\
&\lesssim \normss{\phi} \left|\int_{\R^n} (\sigma^{p} - U_1^{p}-U_2^p) W \right| \lesssim \normss{\phi} \int_{\R^n} V W \lesssim \normss{\phi} Q^p,
\end{align*}
where we used Lemma \ref{lemma h bound in double star} to control the interaction term $\sigma^p-\sum_{i=1}^\nu U_i^p$ and Lemma \ref{Lemma B.2} to control the integral term $\int_{\R^n} V W.$ Thus we get that $\{c^1_b\}_{b=1}^{n+1}$ satisfies the following system
\begin{align*}
    c^1_b \gamma^b + \sum_{a=1}^{n+1}c_a^2 O(q_{12}) = \int_{\R^n} p\sigma^{p-1} \phi Z_1^b - \int_{\R^n} h Z_1^b
 \end{align*}
 and since $q_{12} < Q < \delta$ the above system is solvable such that the estimate \eqref{coefficient estimate} also holds. To prove the estimate for $\nu>2$ bubbles, one just needs to observe that for each $j\in I$
 \begin{align*}
\left(\sigma^{p-1}-U_{j}^{p-1}\right) U_{j} \leq \sum_{i=1}^{\nu}\left(\sigma^{p-1}-U_{i}^{p-1}\right) U_{i}=\sigma^{p}-\sum_{i} U_{i}^{p}
\end{align*}
and thus repeating the same argument as above one ends up with the following system
\begin{align*}
c_{b}^{1} \gamma^{b}+\sum_{i \neq 1} \sum_{a=1}^{n+1} c_{a}^{i} O\left(q_{i 1}\right)=\int p \sigma^{p-1} \phi Z_{1}^{b}+\int h Z_{1}^{b}
\end{align*}
which is a solvable system for $\delta$ small enough.
\end{proof}
Next, we prove that the system \eqref{first approx system} has a unique solution under some smallness conditions. 

\begin{lemma}\label{lemma: soln existence of approx system}
There exists a constant $\delta_0 > 0$ and $C>0$ independent of $\delta$ such that for $\delta_0 \leq \delta$ and any $h$ such that $\normds{h}<\infty$ the system \eqref{first approx system} has a unique solution $\phi \equiv L_{\delta}(h)$ such that
\begin{align}\label{lemma existence estimates}
    \normss{L_{\delta}(h)}\leq C \normds{h}, \quad |c^i_a| \leq \delta \normds{h}. 
\end{align}
\end{lemma}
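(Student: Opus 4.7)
The plan is to apply the Fredholm alternative in the closed Hilbert subspace
$$
H := \{\varphi \in \dot{H}^s(\R^n) : \innerhs{\varphi}{Z_i^a} = 0 \ \text{ for all } 1\leq i\leq \nu,\ 1\leq a\leq n+1\}.
$$
The key identity $(-\Delta)^s Z_i^a = p\,U_i^{p-1} Z_i^a$ from Lemma \ref{fractional-spectral-lemma} implies that for any $\psi \in H$ one has $\int_{\R^n} U_i^{p-1} Z_i^a \psi = p^{-1}\innerhs{Z_i^a}{\psi} = 0$, so the coefficient terms in \eqref{first approx system} drop out when tested against $H$. The weak formulation thus reads: find $\varphi\in H$ with $\innerhs{\varphi}{\psi} - p\int \sigma^{p-1}\varphi\psi = \int h\psi$ for all $\psi\in H$. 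Defining $K:H\to H$ via Riesz representation by $\innerhs{K\varphi}{\psi} = p\int \sigma^{p-1}\varphi\psi$ and $\tilde{h}\in H$ similarly from the functional $\psi\mapsto\int h\psi$ (which is bounded because a direct integrability check shows $V\in L^{(2^*)'}\subset \dot{H}^{-s}$), the equation becomes $(I - K)\varphi = \tilde h$ in $H$. Compactness of $K$ follows from the compact embedding $\dot{H}^s(\Omega)\hookrightarrow L^{2^*}(\Omega)$ on bounded $\Omega$ together with the decay of $\sigma^{p-1}$ at infinity, exactly as invoked in the proof of Lemma \ref{fractional-spectral-lemma}.

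By the Fredholm alternative, existence reduces to triviality of $\ker(I-K)$. Suppose $\varphi_*\in H$ solves $(-\Delta)^s\varphi_* = p\sigma^{p-1}\varphi_*$ weakly. I will upgrade regularity by the bootstrap scheme of Lemma \ref{fractional-spectral-lemma}: iterating the equation yields $\varphi_*\in \dot{H}^{(2k+1)s}$ for every $k$, hence $\varphi_*\in L^\infty$ once $2(2k+1)s > n$; a barrier/decay argument comparing $\varphi_*$ to multiples of $W$ (mirroring the contradiction scheme in the proof of Lemma \ref{first approx: baby existence lemma}) then gives $\normss{\varphi_*}<\infty$. With this pointwise control in hand, Lemma \ref{first approx: baby existence lemma} applied with $h=0$ forces $\normss{\varphi_*}=0$, so $\varphi_*\equiv 0$. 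Fredholm then yields a unique $\varphi\in H$ for every source. The coefficients $c_a^i$ are recovered from the full PDE by pairing against $Z_j^b$, which produces the finite linear system
$$
\sum_{i,a} c_a^i \int_{\R^n} U_i^{p-1} Z_i^a Z_j^b = p\int_{\R^n}\sigma^{p-1}\varphi\, Z_j^b - \int_{\R^n} h\, Z_j^b;
$$
by Lemma \ref{Lemma A.5} the Gram matrix is nearly diagonal (off-diagonal entries of order $q_{ij}\lesssim \delta$), hence invertible for $\delta$ small, and the $c_a^i$ are determined uniquely.

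With $\varphi = L_\delta(h)$ existing and unique, Lemma \ref{first approx: baby existence lemma} applied directly gives $\normss{L_\delta(h)}\leq C\normds{h}$. Plugging this into Lemma \ref{lemma coefficient bound} yields
$$
|c_a^i|\lesssim Q\normds{h} + Q^p\normss{\varphi}\lesssim Q\bigl(1 + CQ^{p-1}\bigr)\normds{h}\lesssim Q\normds{h}\leq \delta\normds{h},
$$
where $p>1$ and $Q\leq \delta$ are used, and the implicit constant is absorbed by shrinking $\delta_0$.

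The principal technical obstacle is the regularity upgrade in the uniqueness step: promoting a bare $\dot{H}^s$-weak solution of the homogeneous problem to one satisfying the weighted pointwise bound $\normss{\varphi_*}<\infty$, which is a prerequisite for invoking Lemma \ref{first approx: baby existence lemma}. This requires combining the fractional Sobolev bootstrap with a barrier argument adapted to the weight $W$; the remaining ingredients are either standard Fredholm theory or direct consequences of the lemmas already established.
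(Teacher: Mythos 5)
Your proposal follows the same route as the paper: weak formulation in the orthogonal subspace $H$, Fredholm alternative, kernel triviality via the a priori estimate of Lemma \ref{first approx: baby existence lemma} combined with the coefficient bound of Lemma \ref{lemma coefficient bound}, and the final estimates read off from those same two lemmas. Your extra discussion of upgrading a bare $\dot{H}^s$ kernel element to one with finite $\normss{\cdot}$ norm (bootstrap plus barrier) addresses a step the paper leaves implicit when it normalizes $\normss{\phi_\delta}=1$, and is a reasonable elaboration rather than a different method.
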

\begin{proof}
We imitate the proof of Proposition 4.1 in \cite{delPino2003TwobubbleSI}. For this consider the space of functions 
\begin{align*}
    H = \{\phi \in \dot{H}^{s}(\R^n): \innerhs{\phi}{Z_i^a}=-p\int_{\R^n} U_i^{p-1} \phi Z_i^a = 0, i=1,\cdots,\nu, a =1,\cdots, n+1\},
\end{align*}
endowed with the natural inner product
\begin{align*}
    \inner{\phi}{\psi}_{H} =  \inner{\phi}{\psi}_{\dot{H}^s}= \frac{C_{n,s}}{2}\int_{\R^n} \int_{\R^n} \frac{(\phi(x)-\phi(y)) (\psi(x)-\psi(y)) }{|x-y|^{n+2s}} dx dy.
\end{align*}
In the weak form solving the system \eqref{first approx system} is equivalent to finding a function $\phi\in H$ that for all $\psi \in H$ satisfies
\begin{align*}
\inner{\phi}{\psi}_{H}  = \inner{-p\sigma^{p-1}\phi-h}{\psi}_{L^2} 
\end{align*}
which in operator form can be written as 
\begin{align*}
    \phi  = T_{\delta}(\phi) + \tilde{h}
\end{align*}
where $\tilde{h}$ depends linearly on $h$ and $T_\delta$ is a compact operator on $H.$ Then Fredholm's alternative implies that there exists a unique solution $\phi\in H$ to the above equation provided the only solution to 
\begin{align*}
    \phi = T_{\delta}(\phi)
\end{align*}
is $\phi \equiv 0$ in $H.$ In other words we want to show that the following equation has a trivial solution in $H$
\begin{align*}
    (-\Delta)^s \phi - p\sigma^{p-1}\phi +\sum_{i, a} c^i_a U_i^{p-1}Z_i^a=0. 
\end{align*}
We proceed by contradiction. Suppose there exists a non-trivial solution $\phi\equiv \phi_{\delta}$. W.l.o.g. assume that $\normss{\phi_{\delta}} =1.$ However from Lemma \ref{first approx: baby existence lemma} and Lemma \ref{lemma coefficient bound} we get that $\normss{\phi_{\delta}}\to 0$ as $Q \to 0$ since
\begin{align*}
    \normss{\phi_{\delta}} \leq C \sum_{i,a}|c_a^i|\leq  C' Q^p\normss{\phi_{\delta}}\lesssim Q^p,
\end{align*}
which is a contradiction. Thus for each $h$, the system \eqref{first approx system} admits a unique solution in $H.$ Furthermore the estimates \eqref{lemma existence estimates} also follow from Lemma \ref{first approx: baby existence lemma} and Lemma \ref{lemma coefficient bound}. 
\end{proof}
With this Lemma in hand, we can now prove the main result of this section.
\begin{proposition}\label{prop: rho0 existence and pointwise estimate}
Suppose $\delta$ is small enough. There exists $\rho_0$ and a family of scalar $\{c^i_a\}$ which solves \eqref{eqn: rho0 PDE}
with
\begin{align}\label{estimate: rho0}
    |\rho_0(x)|\leq C W(x).
\end{align}
\end{proposition}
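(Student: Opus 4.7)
The plan is to find $\rho_0$ as a fixed point of a nonlinear map built from the linear solver $L_\delta$ of Lemma \ref{lemma: soln existence of approx system}. Rewriting the nonlinear system \eqref{eqn: rho0 PDE} by expanding
\[
(\sigma+\varphi)^p - \sigma^p = p\sigma^{p-1}\varphi + N(\varphi), \qquad N(\varphi) := (\sigma+\varphi)^p - \sigma^p - p\sigma^{p-1}\varphi,
\]
the equation becomes
\[
(-\Delta)^s \varphi - p\sigma^{p-1}\varphi = h + N(\varphi) - \sum_{i,a} c_a^i U_i^{p-1} Z_i^a,
\]
together with the orthogonality conditions $\innerhs{\varphi}{Z_i^a}=0$. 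Defining
\[
T(\varphi) := L_\delta\bigl(h + N(\varphi)\bigr),
\]
the problem is reduced to finding a fixed point of $T$ in the Banach space $(X,\normss{\cdot})$, where $X$ is the subspace of $\dot{H}^s(\R^n)$ satisfying the orthogonality conditions.

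The key analytic input is the pointwise control of the nonlinearity in the $\normds{\cdot}$ norm. I will prove estimates of the form
\[
\normds{N(\varphi)} \lesssim \normss{\varphi}^{\min(p,2)}, \qquad \normds{N(\varphi)-N(\psi)} \lesssim \bigl(\normss{\varphi}+\normss{\psi}\bigr)^{\min(p,2)-1}\normss{\varphi-\psi}.
\]
When $p\ge 2$ (i.e.\ $n\le 6s$) one uses the mean value inequality $|N(\varphi)| \lesssim \sigma^{p-2}\varphi^2 + |\varphi|^p$; when $p<2$ (i.e.\ $n>6s$) one uses Hölder continuity $|N(\varphi)|\lesssim |\varphi|^p$. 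These must be converted into pointwise bounds by $V(x)$ times a power of $\normss{\varphi}$; this reduces to the algebraic inequalities $\sigma^{p-2} W^2 \lesssim V$ and $W^p \lesssim V$ (and their symmetric analogues for $N(\varphi)-N(\psi)$), which follow from the explicit definitions of $W$, $V$ in \eqref{deng-W-func-defn}, \eqref{deng-V-func-defn}, the crude bound $\sigma \lesssim \sum_i U_i$, and splitting into the regions $\{|y_i|\le R\}$ and $\{|y_i|\ge R/2\}$ bubble by bubble.

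Once these nonlinear estimates are in hand, take the closed ball $B_{\rho_*} = \{\varphi \in X : \normss{\varphi}\le \rho_*\}$ with $\rho_* := 2C\normds{h}$, where $C$ is the constant in \eqref{lemma existence estimates}. By Lemma \ref{lemma h bound in double star}, $\normds{h}$ is bounded uniformly in $\delta$, so $\rho_*$ is uniformly small. For $\varphi \in B_{\rho_*}$,
\[
\normss{T(\varphi)} \le C\bigl(\normds{h} + \normds{N(\varphi)}\bigr) \le C\normds{h} + C\rho_*^{\min(p,2)} \le \rho_*
\]
provided $\delta$ is small enough, so $T$ maps $B_{\rho_*}$ into itself. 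A parallel computation with the contraction estimate shows that $T$ is a strict contraction on $B_{\rho_*}$ for $\delta$ small. Banach's fixed point theorem then produces a unique $\rho_0\in B_{\rho_*}$ with $T(\rho_0)=\rho_0$, and Lemma \ref{lemma: soln existence of approx system} simultaneously supplies the scalars $\{c_a^i\}$. The fixed point identity and the definition of $\normss{\cdot}$ immediately yield the pointwise bound $|\rho_0(x)|\le \rho_* W(x) \lesssim W(x)$, which is \eqref{estimate: rho0}.

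The main obstacle is the verification of the weighted pointwise inequalities $\sigma^{p-2}W^2 \lesssim V$ (when $p \ge 2$) and $W^p \lesssim V$ (when $p<2$) uniformly across all regions in the bubble decomposition; this is where the parameter $s$ and the dimension threshold $n=6s$ enter, mirroring the role of the threshold $n=6$ in \cite{deng2021sharp}, and where one must carefully distinguish the core regions $\{|y_i|\le R\}$ from the tail regions $\{|y_i|\ge R/2\}$ and keep track of the powers of $R$ to ensure the estimates close.
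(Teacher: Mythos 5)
Your approach is structurally the same as the paper's: reduce \eqref{eqn: rho0 PDE} to a fixed point for $\varphi\mapsto L_\delta(h+N(\varphi))$ and invoke Banach's theorem on a ball in $(X,\normss{\cdot})$. However, there is a real gap in how you attempt to close the estimates. Lemma \ref{lemma h bound in double star} gives $\normds{h}\leq C(n)$, i.e.\ a \emph{uniform bound}, not smallness; your sentence ``$\normds{h}$ is bounded uniformly in $\delta$, so $\rho_*$ is uniformly small'' is therefore a non sequitur, and the radius $\rho_*=2C\normds{h}$ is merely $O(1)$ as $\delta\to 0$. Consequently the self-map condition $C\normds{h}+C\rho_*^{\min(p,2)}\le\rho_*$ does \emph{not} follow from your stated estimate $\normds{N(\varphi)}\lesssim\normss{\varphi}^{\min(p,2)}$: with $\rho_*\approx 1$ you would need $C\rho_*^{p-1}\le 1/2$, which nothing you have written enforces.

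The missing ingredient is precisely the one the paper exploits: the inequality $W^p\lesssim V$ is not tight, and tracking the constant gives $\sup_x W^p(x)V^{-1}(x)\lesssim R^{-4(p-1)}$, which tends to $0$ as $\delta\to 0$ because $R\to\infty$ for a $\delta$-interacting family. It is this $R$-dependent small factor, not any smallness of $\normds{h}$, that makes $T$ a self-map and a contraction on a ball of fixed $O(1)$ radius (the paper uses $\normss{u}\leq C_1C_2+1$). You state the algebraic inequality $W^p\lesssim V$ as the ``main obstacle,'' but you do not flag that the implicit constant must vanish as $\delta\to0$, and your subsequent argument uses neither this vanishing constant nor genuine smallness of $\rho_*$, so it does not close. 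Finally, the case $p\geq 2$ and the estimate $\sigma^{p-2}W^2\lesssim V$ are beside the point here: Proposition \ref{prop: rho0 existence and pointwise estimate} lives in the regime $n\geq 6s$, where $p\leq 2$ and only the bound $|N(\varphi)|\lesssim|\varphi|^p$ is needed, exactly as the paper does.
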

\begin{proof}
Set
\begin{align*}
    N_{1}(\phi) = (\sigma+\phi)^p-\sigma^p - p\sigma^{p-1}\phi, \quad N_2 = \sigma^p - \sum_{i=1}^\nu U_i^p
\end{align*}
and denote $L_{\delta}(h)$ to be the solution to the system \eqref{first approx system}.
Then
\begin{align*}
    (-\Delta)^s \phi - p\sigma^{p-1}\phi= N_1(\phi) + N_2- \sum_{i,a} c^{i}_a U^{p-1}_i Z_i^a=0.
\end{align*}
Thus solving \eqref{eqn: rho0 PDE} is the same as solving
\begin{align*}
    \phi = A(\phi) = L_{\delta}(N_1(\phi)) +L_{\delta}(N_2)
\end{align*}
where $L_{\delta}$ is defined in Lemma \ref{lemma: soln existence of approx system}. We show that $A$ is a contraction map in a suitable normed space and show the existence of a solution $\rho_0$ using the fixed point theorem. First using Lemma \ref{lemma h bound in double star} we have that
\begin{align*}
    \normds{N_2} = \normds{h} \leq C_2
\end{align*}
for some large constant $C_2 = C_2(n,s)>0.$  To control $N_1$ observe that
\begin{align*}
    |N_1(\phi)| \leq C|\phi|^p \leq C \normss{\phi} W^p
\end{align*}
which implies that 
\begin{align*}
    \normds{N_1(\phi)} \leq C \normss{\phi} \sup_{x\in \R^n} W^p(x) V^{-1}(x) \leq C_1 R^{-4(p-1)} \normss{\phi}
\end{align*}
for some large constant $C_1= C_1(n,s)>0$ that also satisfies the estimate
\begin{align*}
    \normss{L_{\delta}(h)}\leq C_1 \normds{h}.
\end{align*}
Define the space
\begin{align*}
    E = \{u\in C^{1}(\R^n) \cap \dot{H}^{s}(\R^n): \normss{u}\leq C_1C_2 + 1\}.
\end{align*}
We show that the operator $A$ is a contraction on the space $(E, \normss{\cdot}).$ First we show that $A(E)\subset E.$ For this take $\phi \in E.$ Then 
\begin{align*}
    \normss{A(\phi)} &= \normss{L_{\delta}(N_1(\phi)) + L_{\delta}(N_2)} \leq C_1 \normds{N_1(\phi)} + C_1\normds{N_2}\\
    &\leq C_1^2R^{-4(p-1)}(C_1C_2 + 1) + C_1 C_2 \leq C_1 C_2 + 1
\end{align*}
for $R\gg 1$ when $\delta$ is small. Next, we show that $A$ is a contraction map. Observe that for $\phi_1, \phi_2\in E$ we have
\begin{align*}
\normss{A(\phi_1) - A(\phi_2)} \leq \normds{N_1(\phi_1) - N_1(\phi_2)}.
\end{align*}
Since when $n\geq 6s$, $|N'(t)|\leq C |t|^{p-1}$ which implies
\begin{align*}
|N_1(\phi_1) - N_1(\phi_2)| V^{-1} &\leq C(|\phi_1|^{p-1}+|\phi_2|^{p-1})|\phi_1 -\phi_2|\\
&\leq C (\normss{\phi_1}^{p-1} + \normss{\phi_2}^{p-1}) \normss{\phi_1 -\phi_2} W^{p}V^{-1} \\
&\leq \frac{1}{2} \normss{\phi_1 -\phi_2} ,
\end{align*}
where we choose small enough $\delta$ such that $R\gg 1.$ Then we get
\begin{align*}
\normss{A(\phi_1) - A(\phi_2)} \leq \normds{N_1(\phi_1) - N_1(\phi_2)} \leq \frac{1}{2}\normss{\phi_1 -\phi_2}.
\end{align*}
This shows that the equation $\phi = A(\phi)$ has a unique solution. Finally from Lemma \ref{lemma: soln existence of approx system} we get
\begin{align*}
    \normss{\phi} &= \normss{A(\phi)} \leq \normss{L_{\delta}(N_1)(\phi)} + \normss{L_{\delta}(N_2)} \\
    &\leq \normds{N_1(\phi)} + \normds{N_2} \leq C_1C_2 + 1 \leq C
\end{align*}
for a large constant $C>0.$
\end{proof}

\subsection{Energy Estimates of the First Approximation $\rho_0$}
In this section, our goal is to establish $L^2$ estimate for $(-\Delta)^{s/2} \rho_0$ where $\rho_0$ is the solution of the system \eqref{eqn: rho0 PDE} as in Proposition \ref{prop: rho0 existence and pointwise estimate}. 

Using Lemma \ref{Lemma B.2}, Lemma \ref{Lemma B.3}, and Proposition \ref{prop: rho0 existence and pointwise estimate} we obtain the following estimate. 
\begin{proposition}\label{prop: rho0 grad L2 estimate}
Suppose $\delta$ is small enough. Then, for $n\geq 6s$
\begin{align}\label{rho0 L2 gradient estimate}
\|(-\Delta)^{s/2}\rho_0\|_{L^2}\lesssim \begin{cases}Q|\log Q|^{\frac12}, &\text{if }n=6s,\\ Q^{\frac{p}{2}},&\text{if }n> 6s.\end{cases}.
\end{align}
\end{proposition}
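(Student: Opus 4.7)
The plan is to test the equation \eqref{eqn: rho0 PDE} against $\rho_0$ itself. Starting from
$$\|(-\Delta)^{s/2}\rho_0\|_{L^2}^2 = \innerl{\rho_0}{(-\Delta)^s \rho_0}$$
and substituting $(-\Delta)^s\rho_0 = \bigl[(\sigma+\rho_0)^p - \sigma^p\bigr] + h - \sum_{i,a} c_a^i U_i^{p-1}Z_i^a$, I would first observe that the Lagrange-multiplier sum contributes nothing: the orthogonality condition $\innerhs{\rho_0}{Z_i^a}=0$ combined with the eigenvalue identity $(-\Delta)^s Z_i^a = p U_i^{p-1} Z_i^a$ from Lemma \ref{fractional-spectral-lemma} yields $\innerl{\rho_0}{U_i^{p-1}Z_i^a}=0$. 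What remains is
$$\|(-\Delta)^{s/2}\rho_0\|_{L^2}^2 = \int_{\R^n}\rho_0\bigl[(\sigma+\rho_0)^p-\sigma^p\bigr] + \int_{\R^n} h\rho_0.$$

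Next I would bound the nonlinear difference pointwise by $|(\sigma+\rho_0)^p - \sigma^p| \lesssim \sigma^{p-1}|\rho_0| + |\rho_0|^p$, which is valid since $1<p\leq 2$ whenever $n\geq 6s$. Together with the pointwise estimate $|\rho_0|\leq CW$ from Proposition \ref{prop: rho0 existence and pointwise estimate} and the bound $|h|\lesssim V$ (already established in the proof of Lemma \ref{lemma h bound in double star}), this reduces everything to purely weight-theoretic integrals:
$$\|(-\Delta)^{s/2}\rho_0\|_{L^2}^2 \lesssim \int_{\R^n} \sigma^{p-1} W^2 + \int_{\R^n} W^{p+1} + \int_{\R^n} V W.$$
Each of these three terms is now amenable to direct evaluation via the integral estimates of Lemma \ref{Lemma B.2} and Lemma \ref{Lemma B.3}.

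The remaining step is purely computational: track the sharp $Q$-dependence of the weighted integrals above. The dominant term is $\int VW$, and a routine scaling analysis centered on each bubble $U_i$ (splitting the integration into the core region $\{\langle y_i\rangle\leq R\}$ and its complement, using the explicit form of $V$ and $W$) produces a contribution of order $Q^p$ when $n>6s$ and $Q^2|\log Q|$ when $n=6s$; the remaining two integrals $\int \sigma^{p-1}W^2$ and $\int W^{p+1}$ turn out to be of the same or strictly lower order. I expect the main obstacle to be precisely this borderline bookkeeping at $n=6s$: the integrand in $\int VW$ becomes scale-critical on the core annulus $1\lesssim \langle y_i\rangle \lesssim R$, producing the logarithmic correction $|\log Q|$ characteristic of the critical dimension, and care is required to confirm that the $s$-dependent exponents in $V$ and $W$ align exactly as they did in the local case of \cite{deng2021sharp}.
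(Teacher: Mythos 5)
Your proposal follows the paper's proof almost step for step: test \eqref{eqn: rho0 PDE} against $\rho_0$, note the Lagrange-multiplier sum drops out by the orthogonality conditions together with $(-\Delta)^s Z_i^a = pU_i^{p-1}Z_i^a$, expand the nonlinearity, insert $|\rho_0|\lesssim W$ and $|h|\lesssim V$, and reduce to the weighted integrals of the appendix, with $\int VW$ (Lemmas~\ref{Lemma B.2}, \ref{Lemma B.3}) producing the $Q^p$ or $Q^2|\log Q|$ bound. The only small deviation is that for the cubic-or-higher term you bound $\int|\rho_0|^{p+1}$ directly by $\int W^{p+1}$ and estimate that weight integral, whereas the paper invokes Sobolev to get $\|(-\Delta)^{s/2}\rho_0\|_{L^2}^{p+1}$ and absorbs it into the left-hand side; both work and yield the same conclusion.
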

\begin{proof}
Testing the equation in \eqref{eqn: rho0 PDE} with $\rho_0$ we get 
\begin{align*}
    \norml{(-\Delta)^{s/2}\rho_0 }^2 \lesssim \int_{\R^n} \sigma^{p-1}\rho_0 + \int_{\R^n} |\rho_0|^{p+1} + \int_{\R^n} (\sigma -\sum_{i=1}^{\nu}U_i^p) \rho_0,
\end{align*}
where we used the inequality $|(\sigma + \rho_0)-\sigma^p-p\sigma^{p-1}\rho_0|\lesssim |\rho_0|^p.$ Using $\sigma^{p-1} \lesssim U_1^{p-1} + \cdots +U_\nu^{p-1}$, $|\rho_0(x)|\lesssim W(x)$ from Proposition \ref{prop: rho0 existence and pointwise estimate} and Lemma \ref{Lemma B.1} we get
\begin{align*}
    \int_{\R^n} \sigma^{p-1} \rho_0 &\lesssim \int_{\R^n} \sum_{i=1}^{\nu} U_i^{p-1} W^2 \\
    &\lesssim \sum_{i=1}^{\nu}\sum_{j=1}^{2}\int_{\R^n} U_i^{p-1}w^2_{j,1}+U_i^{p-1}w^2_{j,2} \lesssim R^{-n-2s} \approx Q^{p}.
\end{align*}
For the second term using the Sobolev inequality we have
\begin{align}\label{rho0 grad L2 estimate: sobolev term}
\int_{\R^n} |\rho_0|^{p+1} \lesssim \norml{(-\Delta)^{s/2} \rho_0}^{p+1}.
\end{align}
Finally for the interaction term recall that Lemma \ref{lemma h bound in double star} implies that $|h| \lesssim V(x).$ Using this estimate along with $|\rho_0| \lesssim W(x)$ we get
\begin{align*}
    \int_{\R^n} |\sigma- \sum_{i=1}^{\nu}U_i^p - U_2^p)\rho_0| &\lesssim \int_{\R^n} V W\\
    &\approx \sum_{i=1}^{\nu}\sum_{j=1}^{2} \int_{\R^n} v_{i,1}w_{j,1} + v_{i,1}w_{j,2} + v_{i,2}w_{j,1} + v_{i,2}w_{j,2}.
\end{align*}
Using Lemma \ref{Lemma B.2} we get that the RHS is bounded by (up to a constant) $R^{-n-2s} \approx Q^{p}$ when $n\geq 6s$. When $n=6s$ we can obtain a more precise estimate using Lemma \ref{Lemma B.3} since $p=2$
\begin{align*}
    \int_{\R^n} |((U_1+U_2)^2-U_1^2 -U_2^2) \rho_0| \lesssim \int_{\R^n} U_1 U_2 W \lesssim R^{-8}\log R \approx Q^{2}|\log Q|.
\end{align*}

This concludes the proof of Proposition \ref{prop: rho0 grad L2 estimate}.
\end{proof}

\subsection{Energy Estimate of the Second Approximation $\rho_1$}

In our attempt to estimate the energy of the error term $\rho$ as defined from the minimization process in \eqref{minimization process}, we define $\rho_1 = \rho-\rho_0.$ Then since $\rho$ satisfies \eqref{rho PDE} and $\rho_0$ satisfies \eqref{eqn: rho0 PDE} the second approximation $\rho_1$ satisfies
\begin{align}\label{rho1 PDE}
\left\{\begin{array}{l}
(-\Delta)^s \rho_{1}-\left[\left(\sigma+\rho_{0}+\rho_{1}\right)^{p}-\left(\sigma+\rho_{0}\right)^{p}\right]-\sum_{j=1}^{\nu} \sum_{a=1}^{n+1} c_{a}^{j} U_{j}^{p-1} Z_{j}^{a}-f=0 \\ \innerhs{\rho_1}{Z_j^a}=0, \quad j=1,\cdots,\nu;\quad  a=1, \cdots, n+1\end{array}\right.
\end{align}
Here recall that $f= (-\Delta)^s u -u|u|^{p-1}.$ We further decompose $\rho_1$ as
\begin{align}\label{rho2 defn}
    \rho_1=\sum_{i=1}^{\nu}\beta^i U_i+\sum_{i=1}^{\nu}\sum_{a=1}^{n+1}\beta_a^iZ_{i}^a+\rho_2,
\end{align}
where $\rho_2$ satisfies the following orthogonality conditions
\begin{align}\label{rho2 ortho cond}
    \innerhs{\rho_2}{U_i}=\innerhs{\rho_2}{Z_i^a} = 0,
\end{align}
for $i=1,\cdots,\nu$ and $a=1,\cdots, n+1.$ Thus in order to estimate the $L^2$ norm of $(-\Delta)^{s/2} \rho_1$ we estimate the $L^2$ norm of $(-\Delta)^{s/2} \rho_2.$
\begin{lemma}\label{lem: rho2 grad estimate}
Suppose $\delta$ is small enough. Then \begin{align}\label{rh02 L2 grad estimate}
     \|(-\Delta)^{s/2}\rho_2\|_{L^2}\lesssim \sum_{i=1}^{\nu}|\beta^i|+\sum_{i=1}^{\nu}\sum_{a=1}^{n+1}|\beta_a^i|+\|f\|_{H^{-s}}.
\end{align}
\end{lemma}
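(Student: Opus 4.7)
The plan is to test the equation \eqref{rho1 PDE} satisfied by $\rho_1$ against $\rho_2$, exploit the orthogonality relations built into the decomposition \eqref{rho2 ortho cond} to kill the Lagrange-multiplier terms, and then absorb the resulting quadratic-in-$\rho_2$ piece using the spectral coercivity proved for the regime $n<6s$. First, pair both sides of \eqref{rho1 PDE} with $\rho_2$ in the $L^2$ duality. The term $\innerhs{\rho_1}{\rho_2}$ produced by $(-\Delta)^s\rho_1$ equals $\|(-\Delta)^{s/2}\rho_2\|_{L^2}^2$ after using \eqref{rho2 ortho cond} (which also forces $\innerhs{U_i}{\rho_2}=0$ and $\innerhs{Z_i^a}{\rho_2}=0$). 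Since $(-\Delta)^sU_j=U_j^p$ and $(-\Delta)^sZ_j^a=pU_j^{p-1}Z_j^a$, the orthogonality conditions \eqref{rho2 ortho cond} equivalently say $\int U_j^{p-1}Z_j^a\rho_2=0$ and $\int U_j^p\rho_2=0$, so the family $\sum_{j,a}c_a^jU_j^{p-1}Z_j^a$ contributes zero to the tested equation. The $f$-term is immediately controlled by $\|f\|_{H^{-s}}\|(-\Delta)^{s/2}\rho_2\|_{L^2}$.

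The remaining term is the nonlinearity $\int\bigl[(\sigma+\rho_0+\rho_1)^p-(\sigma+\rho_0)^p\bigr]\rho_2$, which I would linearize as $p\int(\sigma+\rho_0)^{p-1}\rho_1\rho_2$ plus a remainder bounded by $\int(\sigma+\rho_0)^{p-2}\rho_1^2|\rho_2|\chi_{\{p\geq 2\}}+\int|\rho_1|^p|\rho_2|$; the latter is of higher order and, by H\"older and Sobolev, is dominated by $\|(-\Delta)^{s/2}\rho_2\|_{L^2}$ times a factor that is $o(1)$ by Proposition \ref{prop: rho0 existence and pointwise estimate} and Proposition \ref{prop: rho0 grad L2 estimate}. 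Substituting the decomposition \eqref{rho2 defn}, the main term splits into a diagonal piece $p\int(\sigma+\rho_0)^{p-1}\rho_2^2$ and cross pieces $p\sum_i\beta^i\int(\sigma+\rho_0)^{p-1}U_i\rho_2+p\sum_{i,a}\beta_a^i\int(\sigma+\rho_0)^{p-1}Z_i^a\rho_2$. The cross terms are handled by Cauchy–Schwarz combined with the boundedness of $\int\sigma^{p-1}U_i^2$ and $\int\sigma^{p-1}(Z_i^a)^2$ (and the smallness of the $\rho_0$-perturbation via $|\rho_0|\lesssim W$), producing an upper bound of the form $C\bigl(\sum_i|\beta^i|+\sum_{i,a}|\beta_a^i|\bigr)\|(-\Delta)^{s/2}\rho_2\|_{L^2}$.

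For the diagonal term I would invoke the spectral estimate of the $n<6s$ section: under the orthogonality conditions $\int\rho_2U_i^p=\int\rho_2U_i^{p-1}Z_i^a=0$, there is $\tilde c<1$ with $p\int\sigma^{p-1}\rho_2^2\leq\tilde c\|(-\Delta)^{s/2}\rho_2\|_{L^2}^2$. Replacing $\sigma^{p-1}$ by $(\sigma+\rho_0)^{p-1}$ is a perturbation of size
\[
\int\bigl|(\sigma+\rho_0)^{p-1}-\sigma^{p-1}\bigr|\rho_2^2
\lesssim \int\bigl(|\rho_0|^{p-1}+\sigma^{p-2}|\rho_0|\chi_{\{p\geq 2\}}\bigr)\rho_2^2,
\]
which, by the pointwise bound $|\rho_0|\lesssim W$ from Proposition \ref{prop: rho0 existence and pointwise estimate}, H\"older, and Sobolev embedding $\dot H^s\hookrightarrow L^{2^*}$, is $o(1)\|(-\Delta)^{s/2}\rho_2\|_{L^2}^2$ as $\delta\to0$. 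Combining these estimates yields
\[
\|(-\Delta)^{s/2}\rho_2\|_{L^2}^2
\leq(\tilde c+o(1))\|(-\Delta)^{s/2}\rho_2\|_{L^2}^2
+C\Bigl(\textstyle\sum_i|\beta^i|+\sum_{i,a}|\beta_a^i|+\|f\|_{H^{-s}}\Bigr)\|(-\Delta)^{s/2}\rho_2\|_{L^2},
\]
and absorbing the quadratic piece on the left gives \eqref{rh02 L2 grad estimate}.

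The main obstacle is carrying the spectral coercivity proved around a single bubble to the present multi-bubble, perturbed weight $(\sigma+\rho_0)^{p-1}$. This requires combining the bump-function localization of Lemma \ref{fractional-bump-lemma} with the fractional Kato--Ponce commutator estimate \eqref{kato} (as in Section \ref{section:spectral}) to separate the contributions from each bubble, together with the pointwise control $|\rho_0|\lesssim W$ to ensure that the deviation $(\sigma+\rho_0)^{p-1}-\sigma^{p-1}$ is genuinely negligible in the relevant integrals; the case $n\geq 6s$ (where $p-1<1$) must be handled with the inequality $|a^{p-1}-b^{p-1}|\lesssim|a-b|^{p-1}$ rather than a first-order Taylor expansion, but both cases reduce to a weighted-$L^2$ bound on $\rho_0$ that is already furnished by Propositions \ref{prop: rho0 existence and pointwise estimate} and \ref{prop: rho0 grad L2 estimate}.
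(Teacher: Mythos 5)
Your proposal matches the paper's argument in essentially every respect: test \eqref{rho1 PDE} against $\rho_2$, kill the Lagrange-multiplier terms via \eqref{rho2 ortho cond}, expand the nonlinearity around $(\sigma+\rho_0)^{p-1}\rho_1$, substitute the decomposition \eqref{rho2 defn} to split into a quadratic-in-$\rho_2$ piece (controlled by the spectral coercivity with constant $\tilde c<1$, plus the $|\rho_0|^{p-1}$ perturbation handled via $\|(-\Delta)^{s/2}\rho_0\|_{L^2}^{p-1}=o(1)$) and cross pieces linear in $\mathcal{B}=\sum_i|\beta^i|+\sum_{i,a}|\beta_a^i|$, then absorb. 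The only difference from the paper is cosmetic: you bound the cross terms by weighted Cauchy--Schwarz against the boundedness of $\int\sigma^{p-1}U_i^2$, whereas the paper uses H\"older with $\|(\sigma+\rho_0)^{p-1}U_i\|_{L^{2n/(n+2s)}}\|\rho_2\|_{L^{2^*}}$ directly; both are equivalent.

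One point of bookkeeping to tighten: you assert the remainder $\int|\rho_1|^p|\rho_2|$ is $\|(-\Delta)^{s/2}\rho_2\|_{L^2}$ times a factor that is $o(1)$ \emph{by the $\rho_0$ propositions}, but $\rho_1$ is not controlled by $\rho_0$ alone; it carries $\mathcal{B}$-size contributions and a $\rho_2$ contribution. The correct bound is
\begin{align*}
\int_{\R^n}|\rho_1|^p|\rho_2|\lesssim\bigl(\mathcal{B}+\|(-\Delta)^{s/2}\rho_2\|_{L^2}\bigr)^p\|(-\Delta)^{s/2}\rho_2\|_{L^2},
\end{align*}
and the closure comes from the smallness $\mathcal{B}<1$ and $\|(-\Delta)^{s/2}\rho_2\|_{L^2}<1$ for $\delta$ small (so that $(\mathcal{B}+\|(-\Delta)^{s/2}\rho_2\|_{L^2})^p\lesssim\mathcal{B}+\|(-\Delta)^{s/2}\rho_2\|_{L^2}$, with the $\mathcal{B}$ part passing to the right-hand side of \eqref{rh02 L2 grad estimate} and the $\|(-\Delta)^{s/2}\rho_2\|_{L^2}^2$ part being absorbed), rather than from Propositions \ref{prop: rho0 existence and pointwise estimate} and \ref{prop: rho0 grad L2 estimate}. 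This is a mislabelling of the source of smallness, not a gap; the estimate still closes.
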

\begin{proof}
Testing \eqref{rho1 PDE} with $\rho_2$ and using \eqref{rho2 ortho cond} we get
\begin{align*}
\int_{\R^n} |(-\Delta)^{s/2} \rho_2|^2 \lesssim \int_{\R^n} |(\sigma+\rho_0+\rho_1)^p-(\sigma+\rho_0)^p|\rho_2 + \int_{\R^n} |f \rho_2|.
\end{align*}
The second term can be trivially estimated as follows
\begin{align}\label{rho02 estimate second term}
\int_{\R^n} |f \rho_2| \lesssim \normd{f}\norml{(-\Delta)^{s/2} \rho_2}.
\end{align}
To estimate the first term, we make use of the following inequality
\begin{align*}
    |(\sigma + \rho_0 + \rho_1)^p - (\sigma+\rho_0)^p -  p(\sigma + \rho_0)^{p-1}\rho_1| \leq |\rho_1|^{p}
\end{align*}
and therefore we get
\begin{align*}
\int_{\R^n} |(\sigma+\rho_0+\rho_1)^p-(\sigma+\rho_0)^p|\rho_2 \leq p\int_{\R^n} |\sigma+\rho_0|^{p-1}|\rho_1\rho_2| + \int_{\R^n} |\rho_1|^{p}|\rho_2|.       
\end{align*}
Using the decomposition of $\rho_1$ in \eqref{rho2 defn} we get
\begin{align*}
    p\int_{\R^n} |\sigma + \rho_0|^{p-1}|\rho_1\rho_2|\leq p\int_{\R^n} |\sigma + \rho_0|^{p-1}|\rho_2|^2 + \mathcal{B}\int_{\R^n} |\sigma + \rho_0|^{p-1} U_i |\rho_2| 
\end{align*}
where $\mathcal{B} = \sum_i|\beta^i| + \sum_{i,a} |\beta^{i}_a|.$ For the first term, we have
\begin{align*}
p\int_{\R^n} |\sigma + \rho_0|^{p-1}|\rho_2|^2 &\leq p\int_{\R^n} \sigma^{p-1}|\rho_2|^2 + p\int_{\R^n} |\rho_0|^{p-1}|\rho_0|^2 \\
&\leq (\tilde{c}  + C \norml{(-\Delta)^{s/2} \rho_0}^{p-1}))\norml{(-\Delta)^{s/2} \rho_2}^2 
\end{align*}
where we made use of the Sobolev inequality and the spectral inequality of the same form as \eqref{fractional-bubble-spectral-estimate} with constant $\tilde{c}<1.$
For the other terms, using Sobolev inequality we have
\begin{align*}
    \int_{\R^n} |\sigma+\rho_0|^{p-1} U_i |\rho_2| \lesssim \norm{(\sigma + \rho_0) U_i}_{L^{\frac{2n}{n+2s}}} \norml{(-\Delta)^{s/2} \rho_2} \lesssim \norml{(-\Delta)^{s/2} \rho_2} 
\end{align*}
and 
\begin{align*}
    \int_{\R^n} |\rho_1|^p |\rho_2| \lesssim \norm{\rho_1}^p_{L^{2^*}} \norm{\rho_2}_{L^{2^*}} \lesssim (\mathcal{B} + \norml{(-\Delta)^{s/2} \rho_2})^p \norml{(-\Delta)^{s/2} \rho_2}.
\end{align*}
Thus combining the above estimates we get
\begin{align}\label{rho02 estimate first term}
    p\int_{\R^n} |\sigma + \rho_0|^{p-1}|\rho_1\rho_2| &\lesssim (\tilde{c}+C\norml{(-\Delta)^{s/2} \rho_0}^{p-1})\norml{(-\Delta)^{s/2} \rho_2}^2 + \mathcal{B}\norml{(-\Delta)^{s/2} \rho_2} \nonumber \\
    &\quad + (\mathcal{B} + \norml{(-\Delta)^{s/2} \rho_2})^p \norml{(-\Delta)^{s/2} \rho_2}.
\end{align}
For small enough $\delta$ using Proposition \ref{prop: rho0 grad L2 estimate} we can make $\norml{(-\Delta)^{s/2} \rho_0}\ll 1.$ Furthermore for small enough $\delta$ we can also make $\mathcal{B}<1$ and $\norml{(-\Delta)^{s/2} \rho_2}<1$ and thus using \eqref{rho02 estimate first term} and \eqref{rho02 estimate second term} we get \eqref{rh02 L2 grad estimate}.
\end{proof}
Estimate \eqref{rh02 L2 grad estimate} for $\rho_2$ suggests that the next natural step is to control the absolute sum of coefficients $\mathcal{B}$ as defined in \eqref{rho2 defn}.
\begin{lemma}\label{lem:beta coeff bound}
If $\delta$ is small, then
\begin{align}\label{eqn:beta coeff estimate}
|\beta^i|+|\beta_a^i|\lesssim Q^2+\|f\|_{H^{-s}}.
\end{align}
\end{lemma}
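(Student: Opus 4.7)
The plan is to derive and invert a finite-dimensional linear system for the unknowns $\{\beta^k,\beta_b^k\}$ by testing \eqref{rho1 PDE} against the bubbles $U_i$ and the derivative directions $Z_i^a$. Multiplying \eqref{rho1 PDE} by $U_i$ and $Z_i^a$ respectively and integrating, and using integration by parts together with the identities $(-\Delta)^s U_i=U_i^p$ and $(-\Delta)^s Z_i^a=pU_i^{p-1}Z_i^a$, one is led to
\begin{align*}
\int \rho_1 U_i^p &= \int\bigl[(\sigma+\rho_0+\rho_1)^p-(\sigma+\rho_0)^p\bigr]U_i+\sum_{j,b}c_b^j\int U_j^{p-1}Z_j^b U_i+\int f U_i,\\
p\int \rho_1 U_i^{p-1}Z_i^a &= \int\bigl[(\sigma+\rho_0+\rho_1)^p-(\sigma+\rho_0)^p\bigr]Z_i^a+\sum_{j,b}c_b^j\int U_j^{p-1}Z_j^b Z_i^a+\int f Z_i^a.
\end{align*}
Since $n\geq 6s$ forces $p\leq 2$, the pointwise inequalities $|(\sigma+\rho_0+\rho_1)^p-(\sigma+\rho_0)^p-p(\sigma+\rho_0)^{p-1}\rho_1|\lesssim|\rho_1|^p$ and $|(\sigma+\rho_0)^{p-1}-\sigma^{p-1}|\lesssim|\rho_0|^{p-1}$ allow replacement of the nonlinear bracket by $p\sigma^{p-1}\rho_1$ modulo controllable errors.

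Next, I would substitute the decomposition $\rho_1=\sum_k\beta^k U_k+\sum_{k,b}\beta_b^k Z_k^b+\rho_2$. The orthogonality relations \eqref{rho2 ortho cond} yield, via integration by parts, $\int\rho_2 U_i^p=0$ and $p\int\rho_2 U_i^{p-1}Z_i^a=0$, so that $\rho_2$ drops out of the leading linear block. The diagonal of the resulting system is governed by the nonzero constants $\int U_i^{p+1}$ for $\beta^i$ and $\gamma^a:=p\int (Z_i^a)^2 U_i^{p-1}$ for $\beta_a^i$, while the parity/scaling identities $\int U_i^p Z_i^a=0$ and $\int Z_i^a Z_i^b U_i^{p-1}=0$ for $a\neq b$ eliminate spurious diagonal couplings. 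The off-diagonal bubble couplings $\int U_k U_i^p$, $\int Z_k^b U_i^p$, $\int U_k U_i^{p-1}Z_i^a$, $\int Z_k^b U_i^{p-1}Z_i^a$ with $k\neq i$ are all $O(Q)$ by an interaction estimate analogous to Proposition B.2 of \cite{FigGla20}, and so can be absorbed on the left for $\delta$ small.

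The inhomogeneous side is estimated term by term. By duality, $|\int f U_i|,|\int f Z_i^a|\lesssim\|f\|_{H^{-s}}$. The $c_b^j$-terms contribute $O(Q)\cdot|c_b^j|\lesssim Q^2$ in the off-diagonal indices by Lemma~\ref{lemma coefficient bound}. The nonlinear and cross remainders $\int|\rho_1|^p U_i$ and $\int|\rho_0|^{p-1}|\rho_1| U_i$ are handled by H\"older combined with the pointwise bound $|\rho_0|\lesssim W$ from Proposition~\ref{prop: rho0 existence and pointwise estimate}, the energy bound of Proposition~\ref{prop: rho0 grad L2 estimate}, and the a priori smallness $\|\rho_1\|_{\dot{H}^s}\leq\delta$. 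Finally, residual interaction-type integrals like $\int(\sigma^{p-1}-U_i^{p-1})\rho_1\cdot(\text{test})$ are controlled using $|\sigma^p-\sum_j U_j^p|\lesssim V$ from Lemma~\ref{lemma h bound in double star} together with the appendix's integral estimates, and contribute $O(Q^2)$.

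The main obstacle is the $Z_i^a$-test: the diagonal term $c_a^i\gamma^a/p$ appears with only the bound $|c_a^i|\lesssim Q$ coming from Lemma~\ref{lemma coefficient bound}, which naively would yield $|\beta_a^i|\lesssim Q$ instead of $Q^2$. The resolution is to exploit the Lagrange-multiplier equations obeyed by the $c_a^i$, obtained by testing \eqref{eqn: rho0 PDE} against $Z_j^b$ and invoking $\innerhs{\rho_0}{Z_j^b}=0$. When combined with the $Z_i^a$-test of \eqref{rho1 PDE}, these identities produce a cancellation that replaces the bare $c_a^i$-contribution by a genuinely $O(Q^2)$ interaction integral. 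The resulting linear system for $\{\beta^k,\beta_b^k\}$ is then diagonally dominant with right-hand side of size $O(Q^2+\|f\|_{H^{-s}})$, and inverting it for $\delta$ small yields the claimed estimate \eqref{eqn:beta coeff estimate}.
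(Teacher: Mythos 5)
Your treatment of the $\beta^i$'s is right and matches the paper: multiply \eqref{rho1 PDE} by $U_k$, linearize the nonlinear bracket up to $|\rho_1|^p$, use $\int U_k^p Z_k^a=0$ to kill the diagonal $c$-contribution, and expand $\rho_1$ via \eqref{rho2 defn} to isolate the diagonal $(p-1)\beta^k\int U_k^{p+1}$. But your plan for the $\beta^i_a$'s breaks down. The left-hand side of your ``$Z_i^a$-test,'' $p\int \rho_1 U_i^{p-1}Z_i^a$, is exactly $\langle\rho_1,Z_i^a\rangle_{\dot H^s}$, which is identically zero by the orthogonality built into \eqref{rho1 PDE}. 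So this test does not yield an equation of the form $\gamma^a\beta_a^i + \cdots = \text{RHS}$; both sides are zero, and in particular the worry that the diagonal $c_a^i\gamma^a/p$ spoils the $Q^2$-bound for $\beta_a^i$ is moot. The proposed rescue --- subtracting the analogous $Z_j^b$-test of \eqref{eqn: rho0 PDE} --- does not supply the missing cancellation either: adding those two tested identities simply reconstructs the original equation for $\rho=\rho_0+\rho_1$ paired against $Z_j^b$, again an identity in the $c$'s, with no direct handle on $\beta_a^i$.

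What the paper does is simpler and sidesteps the $c$-terms entirely: it reads off the $\beta_b^k$-relation purely from the orthogonality $\langle\rho_1,Z_k^b\rangle_{\dot H^s}=0$ combined with the decomposition \eqref{rho2 defn}, the orthogonality \eqref{rho2 ortho cond} of $\rho_2$, and Lemma~\ref{Lemma A.5}. Substituting $\rho_1=\sum\beta^iU_i+\sum\beta_a^iZ_i^a+\rho_2$ into $\langle\rho_1,Z_k^b\rangle_{\dot H^s}=0$ gives $\gamma^b\beta_b^k+\sum_{i\neq k}\beta^i O(q_{ik})+\sum_{i\neq k,a}\beta_a^iO(q_{ik})=0$ with no PDE test and no $c_a^i$ anywhere (this is the paper's equation \eqref{zero ortho cond}). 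That linear relation, paired with the $U_k$-tested estimate, makes the system diagonally dominant with right-hand side $O(Q^2+\|f\|_{H^{-s}})$. Your argument needs to be restructured around this observation; as written, the $Z_i^a$-testing step does not produce the equation you claim, and the proposed cancellation is not there.
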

\begin{proof}
We start by multiplying the bubble $U_k$ to equation \eqref{rho1 PDE} and integrating by parts. Thus we get
\begin{align*}
    &\int_{\R^n} (-\Delta)^{s/2} \rho_1 \cdot (-\Delta)^{s/2} U_k + \int_{\R^n}[(\sigma + \rho_0 + \rho_1)^p - (\sigma +\rho_0)^p]U_k \\
    &+ \sum_{i,a}c^{i}_a U_i^{p-1}U_k Z_{i}^a + \int_{\R^n} U_k f=0.
\end{align*}
Using the inequality 
\begin{align*}
    |(\sigma+\rho_0)^{p-1}- U_k^{p-1}|\lesssim \sum_{i\neq k} U_i^{p-1} + |\rho_0|^{p-1},
\end{align*}
we get the following estimate
\begin{align*}
\begin{split}
    &\left|\int_{\R^n}[(\sigma+\rho_0+\rho_1)^p-(\sigma+\rho_0)^p]U_k-p\int_{\R^n} U_k^{p}\rho_1\right|\\
    &\hspace{1cm}\lesssim
    \sum_{i\neq k}\int_{\R^n} U_i^{p-1}U_k|\rho_1|+ \int_{\R^n} |\rho_1|^p U_k+\int_{\R^n} |\rho_0|^{p-1}\rho_1U_k.
\end{split}
\end{align*}
Estimating the three terms separately we get
\begin{align*}
    \int_{\R^n} U_i^{p-1}|\rho_1|U_k&\leq \|(-\Delta)^{s/2} \rho_1\|_{L^{2}}\|U_i^{p-1}U_k\|_{L^{\frac{2n}{n+2s}}}\lesssim o(1)\left(\mathcal{B}+\|f\|_{H^{-s}}\right),\text{ when }i\neq k \\
    \int_{\R^n} |\rho_1|^p U_k&\lesssim \mathcal{B}^p+\int_{\R^n}\rho_2^pU_k\lesssim \mathcal{B}^p + \norml{(-\Delta)^{s/2} \rho_2}^p \lesssim \mathcal{B}^p+\|f\|_{H^{-s}}^p,\\
    \int_{\R^n}|\rho_0|^{p-1}\rho_1U_k&\lesssim \|(-\Delta)^{s/2}\rho_0\|_{L^2}^{p-1}\|(-\Delta)^{s/2}\rho_1\|_{L^2}\lesssim o(1)\left(\mathcal{B}+\|f\|_{H^{-s}}\right),
\end{align*}
where $o(1)$ denotes a quantity that tends to $0$ when $\delta\to 0.$ Next, using integral estimate similar to Proposition B.2 in Appendix B of \cite{FigGla20} and $|Z_j^a|\lesssim U_j$ we get that
\begin{align*}
    \int_{\R^n} U_j^{p-1}Z_j^a U_k \lesssim \int_{\R^n} U_j^{p}U_k \lesssim Q
\end{align*}
when $j\neq k$ otherwise the above integral is equal to $0.$ Furthermore, the coefficients $c^i_a$ can be estimated using Lemma \ref{lemma h bound in double star} and 
Lemma \ref{lemma coefficient bound} to get
\begin{align*}
    |c^i_a| \lesssim Q\normds{h} + Q^p\normss{\rho_0} \lesssim Q. 
\end{align*}
Thus we get
\begin{align}\label{lhs bound}
    \int_{\R^n} (-\Delta)^{s/2} \rho_1(-\Delta)^{s/2} U_k+p\int_{\R^n} U_k^p\rho_1\lesssim& o(1)\mathcal{B}+Q^2+\normd{f}.
\end{align}
Writing  $\int_{\R^n} (-\Delta)^{s/2} \rho_1(-\Delta)^{s/2} U_k+p\int_{\R^n} U_k^p\rho_1 = (p-1)\int_{\R^n} U_k^p\rho_1$ and using the decomposition of $\rho_1$ in \eqref{rho2 defn} along with the orthogonality condition for $\rho_2$ in \eqref{rho2 ortho cond} we get
\begin{align}\label{lhs decomp}
    \int_{\R^n} (-\Delta)^{s/2} \rho_1(-\Delta)^{s/2} U_k+p\int_{\R^n} U_k^p\rho_1 = (p-1)\left(\sum_i \beta^i\int_{\R^n} U_i^p U_k+\sum_{i,a} \beta_a^i\int_{\R^n} U_k^{p}Z_i^a \right) \nonumber \\
    = (p-1)\beta^k S^{p+1} + \sum_{i\neq k}\beta^i O(q_{ik}) + \sum_{i\neq k,a}\beta^{i}_a O(q_{ik})
\end{align}
where we made use of the integral estimate similar to Proposition B.2 in Appendix B of \cite{FigGla20} in the last step. On the other hand the orthogonality condition $\innerhs{\rho_1}{Z_k^b} =0$, \eqref{rho2 defn}, \eqref{rho2 ortho cond} and Lemma \ref{Lemma A.5} implies
\begin{align}\label{zero ortho cond}
0 &=\sum_{i}\beta^i\int_{\R^n} (-\Delta)^{s/2} U_i\cdot(-\Delta)^{s/2} Z^b_k+\sum_{i,a}\beta_a^i\int_{\R^n} (-\Delta)^{s/2} Z_i^a\cdot (-\Delta)^{s/2} Z^b_k \nonumber\\
&= \sum_{i\neq k}\beta^i O(q_{ik}) + \beta^k_b +  \sum_{i\neq k, a\neq b} \beta^i_a O(q_{ik}).
\end{align}
Thus combining \eqref{zero ortho cond} and \eqref{lhs decomp} we get 
\begin{align*}
  &(p-1)\beta^k S^{p+1} + \sum_{i\neq k}\beta^i O(q_{ik}) + \sum_{i\neq k,a}\beta^{i}_a O(q_{ik}) \\
  &=   (p-1)\beta^k S^{p+1}  + \sum_{i\neq k}\beta^{i} O(q_{ik}) + \sum_{i\neq k,a\neq b}\beta^{i}_a O(q_{ik}) + \sum_{i\neq k}\beta^{i}_b O(q_{ik}) \\
  &= (p-1)\beta^k S^{p+1} -\beta^k_b + \sum_{i\neq k} \beta^i_b O(q_{ik}) 
\end{align*}
which along with \eqref{lhs bound} gives us the desired bound.
\end{proof}

\begin{lemma}\label{lemma: rho1 final grad estimate}
Let $\delta$ be small enough. Then,
\begin{align}\label{rho1 final grad estimate}
\|(-\Delta)^{s/2}\rho_1\|_{L^2}\lesssim Q^2+\|f\|_{H^{-s}}.    
\end{align}
\end{lemma}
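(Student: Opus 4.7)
The strategy is to combine the decomposition of $\rho_1$ in \eqref{rho2 defn} with the two preceding lemmas in a mechanical way. Starting from
\[
\rho_1=\sum_{i=1}^{\nu}\beta^i U_i+\sum_{i=1}^{\nu}\sum_{a=1}^{n+1}\beta_a^iZ_{i}^a+\rho_2,
\]
the triangle inequality in $\dot{H}^s$ gives
\[
\|(-\Delta)^{s/2}\rho_1\|_{L^2}\le \sum_i|\beta^i|\,\|U_i\|_{\dot H^s}+\sum_{i,a}|\beta_a^i|\,\|Z_i^a\|_{\dot H^s}+\|(-\Delta)^{s/2}\rho_2\|_{L^2}.
\]
By scale invariance, $\|U_i\|_{\dot H^s}$ and $\|Z_i^a\|_{\dot H^s}$ equal fixed dimensional constants independent of $(z_i,\lambda_i)$, so the first two terms are bounded by $\mathcal{B}:=\sum_i|\beta^i|+\sum_{i,a}|\beta_a^i|$ up to a constant depending only on $n,s,\nu$.

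Next I would insert the estimate from Lemma \ref{lem: rho2 grad estimate}, namely $\|(-\Delta)^{s/2}\rho_2\|_{L^2}\lesssim \mathcal{B}+\|f\|_{H^{-s}}$, to obtain
\[
\|(-\Delta)^{s/2}\rho_1\|_{L^2}\lesssim \mathcal{B}+\|f\|_{H^{-s}}.
\]
Finally I would apply Lemma \ref{lem:beta coeff bound}, which yields $|\beta^i|+|\beta_a^i|\lesssim Q^2+\|f\|_{H^{-s}}$ for each $i$ and $a$; summing over the (finitely many, depending only on $n$ and $\nu$) indices gives $\mathcal{B}\lesssim Q^2+\|f\|_{H^{-s}}$. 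Substituting this bound into the previous display produces exactly \eqref{rho1 final grad estimate}.

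There is essentially no new analytic obstacle: the whole content of the lemma has been absorbed into the two previous lemmas, and the present statement is a short bookkeeping step. The only minor subtlety worth checking is that the constants in the triangle inequality expansion of $\|\sum_i\beta^i U_i+\sum_{i,a}\beta_a^i Z_i^a\|_{\dot H^s}$ are uniform as the bubbles become weakly interacting; this is immediate because the cross terms $\innerhs{U_i}{U_j}$, $\innerhs{Z_i^a}{Z_j^b}$ ($i\neq j$) and $\innerhs{U_i}{Z_j^a}$ are all $O(q_{ij})=o(1)$, while the diagonal terms are fixed constants, so the Gram matrix of the spanning family is within $o(1)$ of a fixed positive definite matrix and in particular the norm of any linear combination is comparable to the $\ell^1$ (or $\ell^2$) norm of its coefficients.
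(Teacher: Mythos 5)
Your proposal is correct and follows essentially the same route as the paper: decompose $\rho_1$ via \eqref{rho2 defn}, use the triangle inequality (with $\|U_i\|_{\dot H^s},\|Z_i^a\|_{\dot H^s}$ bounded by scale invariance) to get $\|(-\Delta)^{s/2}\rho_1\|_{L^2}\lesssim\mathcal B+\|(-\Delta)^{s/2}\rho_2\|_{L^2}$, then invoke Lemma~\ref{lem: rho2 grad estimate} and Lemma~\ref{lem:beta coeff bound} to conclude. The Gram-matrix remark at the end is not needed for this one-sided triangle-inequality bound, but it is not incorrect.
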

\begin{proof}
From \eqref{rho2 defn}, \eqref{rh02 L2 grad estimate} and \eqref{eqn:beta coeff estimate} we get that
\begin{align*}
    \norml{(-\Delta)^{s/2} \rho_1} \lesssim \mathcal{B} + \norml{(-\Delta)^{s/2}\rho_2} \lesssim \mathcal{B} + \norm{f}_{H^{-s}} \lesssim Q^2 +\norm{f}_{H^{-s}}.
\end{align*}
\end{proof}
\subsection{Conclusion}Assuming Step (\rm{i}), (\rm{ii}), and (\rm{iii}) we conclude the proof of Theorem \ref{fractional-theorem-main} when $n\geq 6s$.
\begin{proof}
The proof proceeds in four steps.

Step 1. Recall the equation satisfied by the error $\rho$
\begin{align*}
    (-\Delta)^s\rho-p\sigma^{p-1}\rho-I_1-I_2-f=0
\end{align*}
where,
\begin{align*}
     f &= (-\Delta)^s u -u|u|^{p-1},\quad  I_1=\sigma^p- \sum_{i=1}^{\nu}U_i^p, \nonumber \\ I_2&=(\sigma+\rho)|\sigma+\rho|^{p-1}-\sigma^p-p\sigma^{p-1}\rho.
\end{align*}
Multiplying $Z_k^{n+1}$, integrating by parts and using the orthogonality condition for $\rho$ we get
\begin{align*}
    \left|\int_{\R^n} I_1 Z^{n+1}_k\right| \leq \int_{\R^n} p\sigma^{p-1}|\rho Z^{n+1}_k| + \int_{\R^n} |I_2 Z^{n+1}_k| + \int_{\R^n} |f Z^{n+1}_k|.
\end{align*}
Using $|I_2|\lesssim |\rho|^p$ for the second term and $|Z^{n+1}_k|\lesssim U_k$ for the third term we get
\begin{align}\label{I1 estimate}
    \left|\int_{\R^n} I_1 Z^{n+1}_k\right| \lesssim \int_{\R^n} \sigma^{p-1}|\rho Z^{n+1}_k| + \int_{\R^n} |\rho|^p |Z^{n+1}_k| + \normd{f}.
\end{align}
Next, we further estimate the first two terms in the above estimate.

Step 2. For $\delta$ small from Lemma 3.13 in \cite{deng2021sharp} we deduce that
\begin{align}\label{sigma_p and rho_p estimate}
    \left|\int_{\R^n} \sigma^{p-1} \rho Z_k^{n+1}\right|=o(Q)+\|f\|_{H^{-s}},\quad \int_{\R^n} |\rho|^{p} |Z_k^{n+1}|=o(Q)+\|f\|_{H^{-s}},
\end{align}
where $Q$ is defined as in \eqref{defn: Q interaction term} and $o(Q)$ denotes a quantity that satisfies $o(Q)/Q\to 0$ as $Q\to 0.$

Step 3. Thus from \eqref{I1 estimate}, \eqref{sigma_p and rho_p estimate} we get
\begin{align}\label{I1 final estimate}
    \int_{\R^n} I_1 Z_k^{n+1} \lesssim o(Q) + \normd{f}.
\end{align}
Thus using Lemma 2.1 from \cite{deng2021sharp} we get that
\begin{align}\label{I1 asymptotic expression}
    \int_{\R^n} I_1 Z_k^{n+1} = \int_{\R^n} I_1 \lambda_k \partial_{\lambda_k} U_k = \int_{\R^n} U_i^p \lambda_k \partial_{\lambda_k} U_k + o(Q) 
\end{align}
where $i\neq k.$ 

Step 4. Arguing as in Lemma 2.3 in \cite{deng2021sharp} we deduce that $Q\lesssim \normd{f}.$ Thus using the fact that, $\rho = \rho_0 + \rho_1$ and estimates \eqref{rho0 L2 gradient estimate} and \eqref{rho1 final grad estimate}  we get that
\begin{align*}
    \norml{(-\Delta)^{s/2} \rho} &\leq \norml{(-\Delta)^{s/2} \rho_0} + \norml{(-\Delta)^{s/2} \rho_1}\\ &\lesssim \begin{cases} \|f\|_{H^{-s}}\left|\log \|f\|_{H^{-s}}\right|^{\frac12},& n=6s,\\\|f\|_{H^{-s}}^{\frac{p}{2}},&n> 6s\end{cases}
\end{align*}
which concludes the proof of Theorem \ref{fractional-theorem-main}.
\end{proof}
\section*{Appendix}
Here we recall some integral estimates from Appendix A and B in \cite{deng2021sharp}. The only difference is that the exponents have been modified by the parameter $s.$ Thus one can follow the same proof as in Appendix A and B of \cite{deng2021sharp} except minor modifications.

\begin{lemma}\label{Lemma A.5}
For the $Z_{i}^{a}$ defined in \eqref{deng-z-derivative-notation}, there exist some constants $\gamma^{a}=\gamma^{a}(n,s)>0$ such that
$$
\int_{\R^n} U_{i}^{p-1} Z_{i}^{a} Z_{i}^{b}=\left\{\begin{array}{ll}
0 & \text { if } a \neq b \\
\gamma^{a} & \text { if } 1 \leq a=b \leq n+1
\end{array}\right.
$$
If $i \neq j$ and $1 \leq a, b \leq n+1$, we have
$$
\left|\int_{\R^n} U_{i}^{p-1} Z_{i}^{a} Z_{j}^{b}\right| \lesssim q_{i j},
$$
where for $i\neq j$, $q_{ij}=\left( \frac{\lambda_i}{\lambda_j} + \frac{\lambda_j}{\lambda_i} + \lambda_i\lambda_j|z_i-z_j|^2\right)^{-(n-2s)/2}.$
\end{lemma}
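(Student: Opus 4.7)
The plan is to reduce every integral to one against the standard bubble $U = U[0,1]$ by exploiting the translation and scaling invariance of the bubble family, then handle the diagonal case by parity and the off-diagonal case by the standard bubble interaction estimate.

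First I would compute how $U_i$ and $Z_i^a$ transform under the change of variables $y_i = \lambda_i(x - z_i)$. A direct calculation gives $U_i(x) = \lambda_i^{(n-2s)/2} U(y_i)$, and differentiating $U[z,\lambda]$ in $z^a$ or $\lambda$ yields
$$Z_i^a(x) = \lambda_i^{(n-2s)/2}\, \zeta^a(y_i), \qquad 1 \leq a \leq n+1,$$
where $\zeta^a = -\partial_{y^a} U$ for $1 \leq a \leq n$ and $\zeta^{n+1} = \frac{n-2s}{2} U + y \cdot \nabla U$. Using the identity $(p-1)(n-2s)/2 = 2s$ together with $dx = \lambda_i^{-n}\, dy_i$, the factors of $\lambda_i$ cancel and one obtains
$$\int_{\R^n} U_i^{p-1} Z_i^a Z_i^b \, dx = \int_{\R^n} U^{p-1} \zeta^a \zeta^b \, dy,$$
which is independent of $(z_i, \lambda_i)$ and depends only on $n, s, a, b$.

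For the diagonal orthogonality with $a \neq b$, I would argue by parity. When $a, b \leq n$ with $a \neq b$, $\zeta^a$ is odd in $y^a$ and $\zeta^b$ is odd in $y^b$ while $U^{p-1}$ is radial, so the integrand is odd in $y^a$ and the integral vanishes. When $a \leq n$ and $b = n+1$, $\zeta^{n+1}$ is radial so the integrand is again odd in $y^a$. For $a = b$, the integrand $U^{p-1} (\zeta^a)^2$ is non-negative and not identically zero, as $\partial_{y^a} U \not\equiv 0$ for $a \leq n$ and $\zeta^{n+1}(0) = \frac{n-2s}{2} U(0) > 0$; hence $\gamma^a > 0$. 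For the off-diagonal case $i \neq j$, the explicit expressions for $\zeta^a$ yield the pointwise bounds $|Z_i^a| \lesssim U_i$ and $|Z_j^b| \lesssim U_j$, giving
$$\left| \int_{\R^n} U_i^{p-1} Z_i^a Z_j^b \right| \lesssim \int_{\R^n} U_i^p U_j \approx Q_{ij}^{(n-2s)/2},$$
where the final estimate is the bubble interaction inequality from Proposition B.2 of \cite{FigGla20} with $Q_{ij} = \min(\lambda_i/\lambda_j, \lambda_j/\lambda_i, (\lambda_i\lambda_j|z_i-z_j|^2)^{-1})$. A comparison of the maximum of three positive terms with their sum yields $Q_{ij}^{(n-2s)/2} \approx q_{ij}$, which is the desired bound.

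The main obstacle is largely bookkeeping rather than a substantive analytic difficulty: one must carefully track the definitions of $Z_i^a$ (especially the scaling derivative, which carries a pre-factor of $\lambda_i$ that conspires with the scaling of the argument) and verify that the exponents cancel. No new analytic input is needed beyond the symmetry properties of the standard bubble and the standard two-bubble interaction estimate already invoked elsewhere in the paper.
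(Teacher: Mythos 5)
Your proof is correct, and since the paper itself defers this lemma to Deng--Sun--Wei (``one can follow the same proof as in Appendix~A and~B of \cite{deng2021sharp} except minor modifications''), it takes essentially the intended approach: rescale so all $\lambda_i$-dependence cancels, kill cross terms by parity (with the key observation that $\zeta^{n+1}$ is radial), and reduce the off-diagonal estimate to the two-bubble interaction $\int U_i^p U_j \approx Q_{ij}^{(n-2s)/2} \approx q_{ij}$ via $|Z_i^a| \lesssim U_i$.
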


\begin{lemma}\label{Lemma B.2}
Suppose $n\geq 6s$ and $R\gg 1$, we have
\begin{align}
\int_{\R^n} v_{1,1} w_{1,1} &= \int_{|y_1|\leq R} \frac{\lambda_1^{\frac{n+2s}{2}}R^{2s-n}}{\langle y_1\rangle^{4s}}\frac{\lambda_1^{\frac{n-2s}{2}}R^{2s-n}}{\langle y_1\rangle^{2}}dx\approx\
\begin{cases}
  R^{-8}\log R,\quad &n=6s,\\
  R^{-n-2s},\quad &n> 6s,\\
\end{cases}\label{s-c-1}\\
\int_{\R^n} v_{1,2} w_{1,2} &= \int_{|y_1|\geq R} \frac{\lambda_1^{\frac{n+2s}{2}}R^{-4s}}{\langle y_1\rangle^{n-2s}}\frac{\lambda_1^{\frac{n-2s}{2}}R^{-4s}}{\langle y_1\rangle^{n-4s}}dx\approx R^{-n-2s},\quad n> 6s,
\label{s-c-2}\\
 \int_{\R^n} v_{1,1} w_{2,1} &= \int_{|y_1|\leq R,|y_2|\leq R} \frac{\lambda_1^{\frac{n+2s}{2}}R^{2s-n}}{\langle y_1\rangle^{4s}}\frac{\lambda_2^{\frac{n-2s}{2}}R^{2s-n}}{\langle y_2\rangle^{2s}}dx
 \lesssim\ \  R^{-n-2s},\quad n\geq 6s,\label{s-c-3}\\
\int_{\R^n} v_{1,1}w_{2,2} &= \int_{|y_1|\leq R,|y_2|\geq R} \frac{\lambda_1^{\frac{n+2s}{2}}R^{2s-n}}{\langle y_1\rangle^{4s}}\frac{\lambda_2^{\frac{n-2s}{2}}R^{-4s}}{\langle y_2\rangle^{n-4s}}dx\lesssim\
    \begin{cases}
            R^{-8}\log R,\quad &n=6s,\\
            R^{-n-2s},\quad &n> 6s,
    \end{cases}
    \label{s-c-4}\\
\int_{\R^n} v_{1,2} w_{2,1} &=     \int_{|y_1|\geq R,|y_2|\leq R} \frac{\lambda_1^{\frac{n+2s}{2}}R^{-4s}}{\langle y_1\rangle^{n-2s}}\frac{\lambda_2^{\frac{n-2s}{2}}R^{2s-n}}{\langle y_2\rangle^{2s}}dx\lesssim\
     \begin{cases}
            R^{-8}\log R,\quad &n=6s,\\
            R^{-n-2s},\quad &n> 6s,
        \end{cases}\label{s-c-5}\\
    \int_{\R^n} v_{1,2} w_{2,2} &=\int_{|y_1|\geq R,|y_2|\geq R} \frac{\lambda_1^{\frac{n+2s}{2}}R^{-4s}}{\langle y_1\rangle^{n-2s}}\frac{\lambda_2^{\frac{n-2s}{2}}R^{-4s}}{\langle y_2\rangle^{n-4s}}dx\lesssim\ \ R^{-n-2s},\quad n> 6s.
     \label{s-c-6}
\end{align}
\end{lemma}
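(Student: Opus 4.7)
The six integral identities can be established by a uniform two-step recipe: rescale to a single bubble's intrinsic coordinates to eliminate the $\lambda$-dependence, then evaluate the resulting dimensionless integral by splitting into radial shells. For the diagonal integrals $\int v_{i,\alpha}w_{i,\beta}$ I would change variables via $y = \lambda_i(x-z_i)$, which produces a Jacobian $\lambda_i^{-n}$ that cancels the prefactors $\lambda_i^{(n+2s)/2}\cdot\lambda_i^{(n-2s)/2} = \lambda_i^n$ exactly. For the cross integrals one uses the same substitution together with $y_j = (\lambda_j/\lambda_i)\,y - \lambda_j(z_j-z_i)$, so that the relative concentration and translation combine into a single dimensionless shift whose size is controlled by $R_{ij}$ via \eqref{R_ij definition}.

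For the diagonal estimates \eqref{s-c-1} and \eqref{s-c-2}, everything reduces to a single radial integral. Concretely,
\begin{align*}
\int_{\R^n} v_{1,1}w_{1,1}\,dx = R^{4s-2n}\int_{|y|\leq R}\langle y\rangle^{-6s}\,dy,
\end{align*}
whose radial part equals $O(1)+\int_1^R r^{n-1-6s}\,dr$: this is $O(\log R)$ when $n=6s$ and $\approx R^{n-6s}$ when $n>6s$. Combined with the external factor $R^{4s-2n}$ this yields the claimed bounds. The estimate \eqref{s-c-2} proceeds identically, giving $\int v_{1,2}w_{1,2} = R^{-8s}\int_{|y|\geq R/2}\langle y\rangle^{6s-2n}\,dy \approx R^{-n-2s}$, with convergence at infinity ensured by $n>6s$.

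For the cross estimates \eqref{s-c-3}--\eqref{s-c-6}, the argument splits along the two interaction regimes used in the proof of Lemma \ref{lemma h bound in double star}. In the bubble tower regime $R_{12}=\sqrt{\lambda_1/\lambda_2}$ (WLOG $\lambda_1\geq\lambda_2$), the displacement $\xi = \lambda_2(z_1-z_2)$ satisfies $|\xi|\lesssim 1$, so on $\{|y_1|\leq R\}$ one has $\langle y_2\rangle\lesssim 1$ and the cross integral collapses to a single-bubble integral of the same type as \eqref{s-c-1}, multiplied by the power $(\lambda_2/\lambda_1)^{(n-2s)/2} = R_{12}^{2s-n}\leq R^{2s-n}$ coming from the second bubble's prefactor. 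In the bubble cluster regime $R_{12} = \sqrt{\lambda_1\lambda_2}|z_1-z_2|$, the concentration ratio is bounded by $R_{12}^2$ while $|\xi|\approx R_{12}$ is large; the simultaneous support conditions $\{|y_1|\leq R\}\cap\{|y_2|\leq R\}$ then force the translation $\xi$ to be nearly cancelled, restricting the joint support to a thin region where $\langle y_i\rangle\gtrsim R_{12}\gtrsim R$ for one of the indices and thereby gaining an extra factor $R^{-(n-2s)}$ over the diagonal estimate. The mixed interior/exterior integrals \eqref{s-c-4} and \eqref{s-c-5} are treated identically, choosing the change of variables belonging to whichever bubble supplies the exterior characteristic function, so that the integrand decays at infinity and permits a convergent shell computation.

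The principal bookkeeping obstacle is the borderline case $n=6s$ appearing in \eqref{s-c-1}, \eqref{s-c-4} and \eqref{s-c-5}, where the radial integral produces a logarithmic factor $\log R$; one must verify that this log is produced exactly once and is not double-counted when splitting into interior/exterior pieces or when combining the tower and cluster contributions. A secondary subtlety is that the rescaling absorbs a factor $(\lambda_j/\lambda_i)^{(n-2s)/2}$ which must be balanced against the target exponent $-n-2s$ using $R\leq R_{ij}$ throughout; this requires a slightly different accounting in the tower and cluster regimes, but in each case $n\geq 6s$ is precisely what makes the resulting $R$-power at least $n+2s$. Once these two points are handled the estimates reduce directly to those in Appendix B of \cite{deng2021sharp}, with each exponent $2$ replaced by $2s$ and the critical dimension $6$ replaced by $6s$.
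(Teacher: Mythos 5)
Your overall strategy is sound and matches the natural route: rescale to the intrinsic coordinates of one bubble (which kills the $\lambda$-prefactors), then estimate by radial shells, and for the cross terms split into the tower and cluster interaction regimes exactly as in the proof of Lemma~\ref{lemma h bound in double star}. The diagonal cases \eqref{s-c-1} and \eqref{s-c-2} are computed correctly and completely; your result $R^{4s-2n}\log R = R^{-8s}\log R$ at $n=6s$ is in fact the right answer and is consistent with the exponent in Proposition~\ref{prop: rho0 grad L2 estimate} ($Q^2|\log Q|\approx R^{-8s}\log R$), which suggests the displayed $\langle y_1\rangle^{2}$ and $R^{-8}$ in the paper's statement are typos for $\langle y_1\rangle^{2s}$ and $R^{-8s}$; you should flag this explicitly rather than silently correct it.

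The cross estimates are where your sketch is not yet a proof. Two concrete gaps. First, in the cluster regime your claim that the support conditions force $\langle y_i\rangle\gtrsim R_{12}\gtrsim R$ for one index is not correct across the whole regime: the scale ratio $\lambda_1/\lambda_2$ can range all the way up to $R_{12}^{2}$, and when it does so one has $|\xi|=\sqrt{\lambda_2/\lambda_1}\,R_{12}\approx 1$, so the joint support does contain a neighbourhood of $y_1=0$ (indeed this corner of the cluster regime behaves like a bubble tower, not a well-separated pair). You need to split on $\lambda_1/\lambda_2$ and treat the near-tower end and the balanced end differently; otherwise the one-line "thin region" argument is false. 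Second, the mixed interior/exterior integrals \eqref{s-c-4}, \eqref{s-c-5} are not "treated identically" to \eqref{s-c-3}: the prefactor picked up by your rescaling is $(\lambda_j/\lambda_i)^{(n\pm 2s)/2}$, and which sign appears depends on which bubble supplies the change of variables. You must verify that the shell integral over the region $\{|y_1|\leq R,\ |y_2|\geq R/2\}$ (which is truncated both inside and outside) is balanced against that prefactor; a careless pass through the shells can lose a factor of $R^{n-6s}$ and only give the claimed bound for $n\geq 10s$. Working one of these cases, e.g.\ \eqref{s-c-4} in the tower regime with $\lambda_2\geq\lambda_1$, you find a two-scale split at $|y_2|\approx R_{12}^{2}$ produces the $\log R$ at $n=6s$ and the clean power $R^{-n-2s}$ for $n>6s$; this is the step that must be written out, since $n\geq 6s$ enters exactly here. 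The paper itself does not give a proof but only cites the corresponding lemmas in \cite{deng2021sharp}, so these are genuine details to supply, not cosmetic ones.
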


\begin{lemma}\label{lemma B.4}
Suppose $n\geq 6s$ and $R \gg 1$, we have
\begin{align}
\int_{|y_1|\leq R} \frac{\lambda_1^{\frac{n+2s}{2}}R^{2s-n}}{\langle y_1\rangle^4}\frac{\lambda_1^{\frac{n-2s}{2}}}{\langle y_1\rangle^{n-2s}}dx\thickapprox&\ R^{2s-n},
\label{V-U-1}\\
\int_{|y_1|\geq R} \frac{\lambda_1^{\frac{n+2s}{2}}R^{-4s}}{\langle y_1\rangle^{n-2s}}\frac{\lambda_1^{\frac{n-2s}{2}}}{\langle y_1\rangle^{n-2s}}dx\thickapprox&\ R^{-n},
\label{v-U-2}\\
 \int_{|y_1|\leq R} \frac{\lambda_1^{\frac{n+2s}{2}}R^{2s-n}}{\langle y_1\rangle^{4s}}\frac{\lambda_2^{\frac{n-2s}{2}}}{\langle y_2\rangle^{n-2s}}dx
 \lesssim&\ R^{-n},\label{V-U-3}\\
    \int_{|y_1|\geq R} \frac{\lambda_1^{\frac{n+2s}{2}}R^{-4s}}{\langle y_1\rangle^{n-2s}}\frac{\lambda_2^{\frac{n-2s}{2}}}{\langle y_2\rangle^{n-2s}}dx\lesssim&\ R^{2s-n}.
    \label{V-U-4}
\end{align}
\end{lemma}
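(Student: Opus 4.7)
The plan is to establish each of the four estimates by an explicit computation in suitably rescaled radial coordinates, deferring the case analysis to the configuration of the two bubbles only where needed. For the single-bubble estimates \eqref{V-U-1} and \eqref{V-U-2}, I would perform the change of variables $y_1 = \lambda_1(x-z_1)$ (so $dx = \lambda_1^{-n}\,dy_1$), which eliminates the $\lambda_1$ dependence. For \eqref{V-U-1} this yields
\begin{align*}
R^{2s-n}\int_{|y_1|\leq R}\langle y_1\rangle^{-(n+2s)}\,dy_1,
\end{align*}
and since $n+2s>n$ the radial integral over $\mathbb{R}^n$ converges to a positive constant $c_{n,s}$, so that truncating to $|y_1|\leq R$ with $R\gg 1$ captures a definite fraction of this mass and the integral is $\asymp R^{2s-n}$. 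For \eqref{v-U-2} the same substitution gives
\begin{align*}
R^{-4s}\int_{|y_1|\geq R}\langle y_1\rangle^{-2(n-2s)}\,dy_1 \asymp R^{-4s}\int_R^\infty r^{4s-n-1}\,dr \asymp R^{-n},
\end{align*}
where convergence at infinity uses $n>4s$, a consequence of $n\geq 6s$.

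For the two-bubble estimates \eqref{V-U-3} and \eqref{V-U-4}, I would follow the same tower/cluster split employed in the proof of Lemma~\ref{lemma h bound in double star}. After renormalizing by $y_1 = \lambda_1(x-z_1)$ and setting $\xi_2=\lambda_1(z_2-z_1)$, the factor $\lambda_2^{(n-2s)/2}\langle y_2\rangle^{-(n-2s)}$ becomes an explicit function of $(y_1,\xi_2,\lambda_2/\lambda_1)$ whose size is controlled by the quantities $R_{12}$ and $R$ from \eqref{R_ij definition}--\eqref{R definition}. In the core region of $U_1$, i.e.\ where $U_1\gtrsim U_2$, one bounds this factor by $C\lambda_1^{(n-2s)/2}R^{2s-n}$ (as in \eqref{case_1: core U1}), which reduces the integrand to the one handled in \eqref{V-U-1}--\eqref{v-U-2} up to the prefactor $R^{2s-n}$; combining with the single-bubble bounds yields the claimed $R^{-n}$ and $R^{2s-n}$. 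In the complementary region, the triangle inequality $\langle y_2\rangle\gtrsim R_{12}^{-2}\langle y_1\rangle$ (in the tower case) or the analogous cluster bound lets one control $\langle y_2\rangle^{-(n-2s)}$ in terms of powers of $R$ and $\langle y_1\rangle$, after which the integral is elementary.

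The main obstacle is the bookkeeping in the two-bubble integrals \eqref{V-U-3}--\eqref{V-U-4}: one must verify that the case split (tower versus cluster; $\lambda_1$ versus $\lambda_2$; core versus non-core) is exhaustive, that the powers of $R$ combine correctly in each case through the parameter $s$, and that the implicit constants never depend on the individual parameters $(\lambda_i,z_i)$ but only on $(n,s,\nu)$. This is precisely the reason these estimates are relegated to an appendix: the computation is entirely parallel to Appendix~B of \cite{deng2021sharp} with the single substitution of exponents involving $s$, and at each step the integrals are either polynomial in $r$ over dyadic annuli or reduce to the single-bubble cases \eqref{V-U-1}--\eqref{v-U-2} already handled. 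I would therefore carry out \eqref{V-U-1}--\eqref{v-U-2} in full detail as above, then invoke the case-by-case analysis from \cite{deng2021sharp} for \eqref{V-U-3}--\eqref{V-U-4}, tracking the fractional exponent $s$ through each line of their argument.
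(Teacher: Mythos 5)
Your proposal is correct and takes essentially the same approach the paper intends: the paper's appendix simply states that Lemma~\ref{lemma B.4} follows from Appendix~B of \cite{deng2021sharp} with the exponents modified by $s$, and that is precisely what you do. In fact you give more detail than the paper does for the single-bubble cases \eqref{V-U-1}--\eqref{v-U-2} (noting, correctly, that the $\langle y_1\rangle^4$ in \eqref{V-U-1} should be read as $\langle y_1\rangle^{4s}$ consistent with the definition of $v_{1,1}$), and your deferral of the two-bubble bookkeeping to the tower/cluster case analysis of \cite{deng2021sharp} is exactly what the source text does.
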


\begin{lemma}\label{Lemma B.1}
Suppose $n\geq 6s$ and $R \gg 1$, then
\begin{align}
    \int_{\R^n} U_1^{p-1}w^2_{1,1} &\approx \int_{|y_1|\leq R} \frac{\lambda_1^{2s}}{\langle y_1\rangle^{4s}}\frac{\lambda_1^{n-2s}R^{4s-2n}}{\langle y_1\rangle^{4s}}dx
    \lesssim R^{-n-4s}, \label{sigma(p-1)rho2-1}\\
    \int_{\R^n} U_1^{p-1}w^2_{2,1} &\approx \int_{|y_1|\geq R} \frac{\lambda_1^{2s}}{\langle y_1\rangle^{4s}}\frac{\lambda_1^{n-2s}R^{-8s}}{\langle y_1\rangle^{2n-8s}}dx\lesssim   R^{-n-4s},\label{sigma(p-1)rho2-2}\\
    \int_{\R^n} U_2^{p-1}w^2_{1,1}&\approx \int_{|y_1|\leq R} \frac{\lambda_2^{2s}}{\langle y_2\rangle^{4s}}\frac{\lambda_1^{n-2s}R^{4s-2n}}{\langle y_1\rangle^{4s}}dx\lesssim  R^{-n-4s},\label{sigma(p-1)rho2-3}\\
    \int_{\R^n} U_2^{p-1}w^2_{2,1} &\approx \int_{|y_1|\geq R} \frac{\lambda_2^{2s}}{\langle y_2\rangle^{4s}}\frac{\lambda_1^{n-2s}R^{-8s}}{\langle y_1\rangle^{2n-8s}}dx\lesssim R^{-n-4s}.
    \label{sigma(p-1)rho2-4}
\end{align}
\end{lemma}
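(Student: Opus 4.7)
My plan is a direct computation: each of the four integrals has the form $\int_{\R^n} U_j^{p-1}(x)\, w_{1,\ell}^2(x)\, dx$ for $j,\ell\in\{1,2\}$, and in each case I reduce it to a purely radial expression via a single change of variables and then estimate the resulting integral by elementary polar-coordinate calculations. The starting input is the pointwise asymptotic $U_j^{p-1}(x) \approx \lambda_j^{2s}\langle y_j\rangle^{-4s}$ with $y_j = \lambda_j(x - z_j)$.

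For the diagonal estimates \eqref{sigma(p-1)rho2-1} and \eqref{sigma(p-1)rho2-2}, the substitution $x\mapsto y_1$ (Jacobian $\lambda_1^{-n}\,dy_1$) cancels every $\lambda_1$ factor, leaving
$$
\int U_1^{p-1}w_{1,1}^2 \approx R^{4s-2n}\int_{|y_1|\leq R}\frac{dy_1}{\langle y_1\rangle^{8s}},\qquad \int U_1^{p-1}w_{1,2}^2\approx R^{-8s}\int_{|y_1|\geq R/2}\frac{dy_1}{\langle y_1\rangle^{2n-4s}}.
$$
Splitting each integral into $\{|y_1|\leq 1\}$ and its complement and passing to spherical coordinates controls the tail under the hypothesis $n\geq 6s$, producing the claimed $R$-power (a possible logarithmic factor at a borderline exponent being harmlessly absorbed into $\lesssim$).

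For the cross estimates \eqref{sigma(p-1)rho2-3} and \eqref{sigma(p-1)rho2-4}, I first express $U_2^{p-1}$ in the $y_1$-coordinates using the scaling relations set up in the proof of Lemma \ref{lemma h bound in double star}. In the bubble-tower regime (say $\lambda_1\geq \lambda_2$), $y_2 = R_{12}^{-2}(y_1 - \xi_2)$ with $|\xi_2|\leq R_{12}^2$, so that
$$
U_2^{p-1}(x)\lesssim \frac{\lambda_1^{2s}R_{12}^{-4s}}{\bigl(1 + R_{12}^{-4}|y_1 - \xi_2|^2\bigr)^{2s}}\lesssim \lambda_1^{2s}R^{-4s}
$$
uniformly in $x$, since $R \leq R_{12}/2$. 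Substituting this pointwise bound into the cross integrals yields the product of $R^{-4s}$ with a diagonal-type integral (now with $\langle y_1\rangle^{-4s}$ in place of $\langle y_1\rangle^{-8s}$), and the polar-coordinate computation from the previous paragraph gives the desired $R^{-n-4s}$ bound. The bubble-cluster regime is treated analogously, passing instead to $y_2$-coordinates on the core of $U_2$.

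The only delicate point is the bookkeeping in the cross integrals, where one must simultaneously control $\langle y_1\rangle$ and $\langle y_2\rangle$ while distinguishing the tower and cluster geometries and respecting the support restrictions of $w_{1,\ell}$. Once the pointwise bound on $U_2^{p-1}$ is in place, the remaining radial integrals reproduce those already analyzed in the diagonal case. The argument is, in effect, a fractional transcription of the proof of the corresponding lemma in Appendix B of \cite{deng2021sharp}, with each exponent $k$ in the local case replaced by its fractional analogue $ks$ (e.g.\ $4\mapsto 4s$, $n-2\mapsto n-2s$).
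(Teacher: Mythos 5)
There is a genuine gap in your diagonal computation, and as a consequence your argument does not actually deliver the exponent stated in the lemma; you should have noticed this, because it points to an apparent typo in the lemma itself.

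Consider \eqref{sigma(p-1)rho2-1}. After your substitution the integral is $R^{4s-2n}\int_{|y_1|\leq R}\langle y_1\rangle^{-8s}\,dy_1$, and in polar coordinates the radial integral behaves like $\int_1^R r^{n-1-8s}\,dr$. This is $\sim R^{n-8s}$ only when $n>8s$, giving $R^{-n-4s}$; for $6s\leq n<8s$ it converges to a constant, giving $R^{4s-2n}$, and for $n=8s$ it is $\sim \log R$, giving $R^{-n-4s}\log R$. Since $R^{4s-2n}>R^{-n-4s}$ precisely when $n<8s$, and $\log R\to\infty$, the claimed $\lesssim R^{-n-4s}$ is simply \emph{false} on the whole range $6s\leq n\leq 8s$ — a range not excluded by the hypothesis. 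Your phrase ``a possible logarithmic factor at a borderline exponent being harmlessly absorbed into $\lesssim$'' does not fix this: a factor of $\log R$ with $R\gg1$ cannot be absorbed into an upper bound with the same power of $R$. What the computation actually yields is $\max(R^{4s-2n},R^{-n-4s})$ (up to a $\log$ at $n=8s$), which is $\lesssim R^{-n-2s}$ for every $n\geq 6s$. That is exactly the bound needed in the proof of Proposition~\ref{prop: rho0 grad L2 estimate}, so the exponent in the lemma statement should almost certainly read $R^{-n-2s}$; a careful proof must notice and flag this.

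The same issue defeats your treatment of \eqref{sigma(p-1)rho2-4}. Replacing $U_2^{p-1}$ by the \emph{constant} bound $\lambda_1^{2s}R^{-4s}$ discards the decay of $\langle y_2\rangle^{-4s}$ at infinity, and the remaining integral is $R^{-12s}\int_{|y_1|\geq R}\langle y_1\rangle^{-(2n-8s)}\,dy_1$, whose radial part $\int_R^\infty r^{8s-n-1}\,dr$ diverges for $n\leq 8s$. To make the fourth estimate work in that range you must keep $\langle y_2\rangle^{-4s}$ and split the domain (in the tower case, $R\leq|y_1|\leq C R_{12}^2$ versus $|y_1|>CR_{12}^2$, where $\langle y_2\rangle\approx 1$ and $\langle y_2\rangle\approx R_{12}^{-2}|y_1|$ respectively; in the cluster case, near and away from $\xi_2$), after which one again obtains $R^{4s-2n}$ for $n<8s$ and $R^{-n-4s}$ for $n>8s$. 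Your crude pointwise bound is only adequate for \eqref{sigma(p-1)rho2-3}, where the remaining integral $\int_{|y_1|\leq R}\langle y_1\rangle^{-4s}\,dy_1\approx R^{n-4s}$ converges on a bounded domain, and for \eqref{sigma(p-1)rho2-2}, which is genuinely $\approx R^{-n-4s}$ for all $n\geq 6s$. In short: items (2) and (3) are fine as you argue them, but items (1) and (4) require correcting the target exponent to $R^{-n-2s}$ and, for (4), a case analysis that retains the decay of $U_2^{p-1}$.
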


\begin{lemma}\label{Lemma B.3}
Suppose $n=6s$ and $R\gg 1$,   then
\begin{align}
        \sum_{j=1}^2\int_{\R^n} U_1(x)U_2(x)\frac{\lambda_j^{2s}R^{-4s}}{\langle y_j\rangle^{2s}}dx\lesssim&\ R^{-8}\log R,
        \label{6-dim-1}
\end{align}
where $y_i = \lambda_i(x-z_i)$ is the $z_i-$centered co-ordinate for $i=1,2.$
\end{lemma}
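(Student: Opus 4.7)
\textbf{Proof proposal for Lemma \ref{Lemma B.3}.} The plan is to mimic the case analysis used in Lemma \ref{lemma h bound in double star} (bubble tower versus bubble cluster), reduce both cases to a single dimensionless integral via rescaling, and isolate the mechanism that produces the logarithmic factor when $n=6s$. Assume without loss of generality that $\lambda_1\geq \lambda_2$ and first consider the \emph{bubble tower} regime where $R\approx R_{12}=\sqrt{\lambda_1/\lambda_2}$. Performing the change of variables $y=\lambda_1(x-z_1)$, so that $y_2=R_{12}^{-2}y+\tilde{\xi}$ with $\tilde{\xi}=\lambda_2(z_1-z_2)$ satisfying $|\tilde{\xi}|\lesssim 1$, and using $n=6s$ (so that $p=2$ and $U_i\approx \lambda_i^{2s}\langle y_i\rangle^{-4s}$), the $j=1$ integral reduces to
\begin{equation*}
\int_{\mathbb{R}^n} U_1 U_2\,\frac{\lambda_1^{2s}R^{-4s}}{\langle y_1\rangle^{2s}}\,dx
\;\approx\; R^{-8s}\int_{\mathbb{R}^n}\frac{dy}{\langle y\rangle^{6s}\,(1+|R_{12}^{-2}y+\tilde{\xi}|^2)^{2s}}.
\end{equation*}

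The key observation is that $n=6s$ makes $\langle y\rangle^{-6s}$ logarithmically borderline at infinity in dimension $n$. I split the $y$-integral at the scale $|y|\sim R_{12}^2$: on $\{|y|\leq R_{12}^2\}$ the factor involving $y_2$ is bounded, so the integrand reduces to $\langle y\rangle^{-6s}$ and yields $\int_0^{R_{12}^2}r^{-1}\,dr\approx \log R$; on $\{|y|>R_{12}^2\}$ we have $(1+|R_{12}^{-2}y+\tilde{\xi}|^2)^{2s}\approx R_{12}^{-8s}|y|^{4s}$ and the integral is $O(1)$. Together this gives $R^{-8s}\log R$, matching the claim. The $j=2$ integral is handled by the same substitution: the extra power of $\langle y_2\rangle^{-2s}$ in the weight (once rewritten in $y$) suffices to kill the critical borderline behaviour, and one finds a bound of order $R^{-8s}$, which is better than what is claimed. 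Summing the two gives the stated estimate in the bubble tower case.

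For the \emph{bubble cluster} regime, $R\approx R_{12}=\sqrt{\lambda_1\lambda_2}|z_1-z_2|$ and the two centres are comparable in scale, so after a symmetric rescaling the same integral is estimated with the effective ``separation'' parameter playing the role that $R_{12}^2$ played above; this is exactly the situation treated in the proof of Proposition~3.3 in \cite{deng2021sharp}, and the exponents carry over with the substitution $(n-2)\mapsto (n-2s)$ and $(n+2)\mapsto (n+2s)$. One again obtains the bound $R^{-n-2s}\log R$, with the logarithm coming from the same borderline integral in dimension $n=6s$.

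The hard part is not conceptual but bookkeeping: tracking how the exponents of $\lambda_1$, $\lambda_2$, $R_{12}$, and $R$ collapse correctly when $n=6s$ (so that $p-1=1$ and several decay rates coincide), and verifying that the logarithm appears \emph{only} in the $j=1$ (respectively $j=2$ in the symmetric role) contribution. Since these are the same exponent-level adjustments that appear throughout the appendix lemmas already stated (Lemmas \ref{Lemma B.2}, \ref{lemma B.4}, \ref{Lemma B.1}), no new idea beyond carefully propagating the parameter $s$ through the argument of \cite{deng2021sharp} is needed.
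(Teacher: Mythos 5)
The paper does not actually give a proof of Lemma \ref{Lemma B.3}: the appendix opens by saying these estimates are "recalled" from Appendix B of \cite{deng2021sharp} with the exponents adjusted by $s$, and no argument is written out. Your proposal therefore supplies a derivation the paper leaves implicit, and the derivation is essentially correct. In the tower regime, after the change of variables $y=\lambda_1(x-z_1)$ and using $U_i\approx\lambda_i^{2s}\langle y_i\rangle^{-4s}$ at $n=6s$, the $j=1$ term reduces (with $\lambda_2/\lambda_1=R_{12}^{-2}$ and $R\approx R_{12}$) to $R^{-8s}\int \langle y\rangle^{-6s}(1+|R_{12}^{-2}y+\tilde\xi|^2)^{-2s}\,dy$, and your split at $|y|\sim R_{12}^2$ correctly produces $\log R$ from the inner region ($\int r^{n-1-6s}dr=\int r^{-1}dr$ at $n=6s$) and an $O(1)$ outer contribution; the $j=2$ weight replaces $\langle y_1\rangle^{-2s}$ by $\langle y_2\rangle^{-2s}$, which after rescaling gives $\langle y\rangle^{-4s}$ in place of $\langle y\rangle^{-6s}$, killing the critical balance and yielding $O(R^{-8s})$ with no log, as you say.

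Two small remarks. First, your computation yields $R^{-8s}\log R$, which is consistent with $Q^2|\log Q|$ (since $Q\approx R^{-4s}$ when $n=6s$) used immediately afterward in Proposition \ref{prop: rho0 grad L2 estimate}; the paper's written exponent $R^{-8}$ in \eqref{6-dim-1} (and likewise in \eqref{s-c-1}, \eqref{s-c-4}, \eqref{s-c-5}) appears to be a typo for $R^{-8s}$ carried over from the $s=1$ case of \cite{deng2021sharp}, so you should not downplay the discrepancy as "matching the claim"; rather, your derivation corrects it. Second, for the cluster regime you defer entirely to \cite{deng2021sharp}, which is acceptable given the paper's own framing, but the same rescaling handles it directly: with $\lambda_1\approx\lambda_2$ and $|\xi|=\lambda_1|z_1-z_2|\approx R_{12}$ the $j=1$ integral becomes $R^{-4s}\int\langle y\rangle^{-6s}\langle y-\xi\rangle^{-4s}\,dy\approx R^{-4s}\cdot R_{12}^{-4s}\log R_{12}$, and by symmetry $j=2$ gives the same. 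So in the cluster case the logarithm in fact appears in \emph{both} $j=1$ and $j=2$; your parenthetical "only in $j=1$ (resp.\ $j=2$)" is accurate only in the tower regime. Neither observation affects the final bound, and your mechanism for the log (the borderline $\int r^{-1}dr$ at $n=6s$) is exactly right.
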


\subsection*{Acknowledgements}The author would like to thank Prof. Alessio Figalli, Prof. Mateusz Kwaśniciki, Federico Glaudo, and the referees for their valuable comments and suggestions.

\subsection*{Data Availability Statement}
Data sharing is not applicable to this article as no datasets were generated or analyzed during the current study.
\bibliographystyle{plain}
\bibliography{refs}

\newpage

\end{document}